\newcommand{\msc}[2][2000]{%
  \let\@oldtitle\@title%
  \gdef\@title{\@oldtitle\footnotetext{#1 \emph{Mathematics subject
        classification.} #2}}%
}
\theoremstyle{plain}
\newtheorem{theorem}{Theorem} [section]
\newtheorem{definition}[theorem]{Definition}
\newtheorem{lemma}[theorem]{Lemma}
\newtheorem{corollary}[theorem]{Corollary}
\newtheorem{proposition}[theorem]{Proposition}
\theoremstyle{remark}
\newtheorem{remark}[theorem]{Remark}
\def\R{{\mathbb R}}
\def\N{{\mathbb N}}
\def\Sch{{\mathcal S}}
\def\O{\mathcal O}
\def\F{\mathcal F}
\def\En{\mathcal E}
\def\u{\mathbf u}
\def\cont{\mathcal C}
\def\1{\mathbbm{1}}
\def\({\left(}
\def\){\right)}
\def\<{\left\langle}
\def\>{\right\rangle}
\def\le{\leqslant}
\def\ge{\geqslant}
\def\Eq#1#2{\mathop{\sim}\limits_{#1\rightarrow#2}}
\def\Tend#1#2{\mathop{\longrightarrow}\limits_{#1\rightarrow#2}}
\def\d{{\partial}}
\def\eps{\varepsilon}
\def\si{{\sigma}}
\DeclareMathOperator{\RE}{Re}
\DeclareMathOperator{\IM}{Im}
\DeclareMathOperator{\diver}{div}
\newcommand{\enstq}[2]{\left\{#1~\middle|~#2\right\}}
\newcommand{\dd}{\mathrm{d}}
\numberwithin{equation}{section}
\begin{document}
\title[Dependence of NLS on the power of the nonlinearity]{On the
  dependence of the nonlinear Schr\"odinger flow  
upon the power of the nonlinearity} 
\author[R. Carles]{R\'emi Carles}
\address{CNRS\\ IRMAR - UMR
  6625\\ F-35000 Rennes, France}
\email{Remi.Carles@math.cnrs.fr}

\author[Q. Chauleur]{Quentin Chauleur}
\address{Univ. Lille, CNRS, Inria, UMR 8524 - Laboratoire Paul Painlevé, F-59000 Lille, France}
\email{quentin.chauleur@inria.fr}

\author[G. Ferriere]{Guillaume Ferriere}
\address{Univ. Lille, CNRS, Inria, UMR 8524 - Laboratoire Paul Painlevé, F-59000 Lille, France}
\email{guillaume.ferriere@inria.fr}
\begin{abstract}
We prove continuity properties for the flow map associated to the
defocusing energy-subcritical power-like nonlinear Schr\"odinger
equation, when the power varies. We show local in time continuity in
the energy space for any power, and global in time continuity for
sufficiently large powers. When the linear dispersive rate is
counterbalanced by a time-dependent rescaling, we show a uniform in
time continuity of the squared 
modulus of this rescaled function, in Kantorovich distance, for any
power, including long range cases in terms of scattering.  The most
difficult result addresses the convergence of suitably renormalized
solutions to the solution of the logarithmic Schr\"odinger equation,
when the power goes to zero, uniformly in time, in Kantorovich
distance. The proof relies on estimates for perturbed porous
medium equations, involving the harmonic Fokker-Planck
operator. 
\end{abstract}
\thanks{A CC-BY public
copyright license has been applied by the authors to the present
document and will be applied to all subsequent versions up to the
Author Accepted Manuscript arising from this submission. }  
\maketitle

\section{Introduction}
\label{sec:intro}

\subsection{Setting}
\label{sec:setting}

We consider the Cauchy problem associated to the defocusing nonlinear Schr\"odinger equation with power-like nonlinearity,
\begin{equation}
  \label{eq:NLS}
  i\d_t u +\frac{1}{2}\Delta u= |u|^{2\si}u\quad ;\quad u_{\mid t=0}=\phi,
\end{equation}
for $x\in \R^d$, $d \ge 1$, in the  energy-subcritical case, 
$0<\si<\frac{2}{(d-2)_+}$. For clarity, we denote by $u_\sigma$ the
above solution, as the main goal is to understand the dependence of
$u_\sigma$ with respect to $\sigma$, locally in time and, especially,
globally in time.

\subsubsection{Power nonlinearity}

Several papers have considered the continuity of the flow map with
respect to the initial data, see
e.g. \cite{CW90,CFH11,Kato95,Kato95corr} for positive results, and 
\cite{KPV01,CCT,CDS12,CaGa25} for lack of continuity (according to the
function space considered). In the present paper, we address the
dependence of the flow map $\phi\mapsto u$ with respect to the nonlinearity, that is
with respect to $\si$. The initial value $\phi$ may depend on $\si$ as well. We
consider a defocusing nonlinearity in order to ensure that the
solution is defined globally in time, and enjoys a dispersive
behavior. In the focusing case, dynamical properties may be different.
\smallbreak

Since we consider energy-subcritical nonlinearities, it is reasonable
to expect that at least locally in time, the flow map is continuous in
$H^1$. This is indeed what we prove in Theorem~\ref{theo:local} by
using Strichartz estimates. The question of the interplay with the
large time behavior is less obvious.
\smallbreak

When $\si>1/d$, the solution to \eqref{eq:NLS} is asymptotically linear, in
the sense that there exists $u_+\in L^2(\R^d)$ such that
\begin{equation}\label{eq:asym-lin}
  \|u(t)-e^{i\frac{t}{2}\Delta} u_+\|_{L^2(\R^d)}\Tend t \infty 0, 
\end{equation}
see e.g. \cite{BGTV23} and references therein. We show that when $\si$
is sufficiently large ($\si>\si_0(d)>1/d$), the flow map is continuous
with respect to $\si$, 
uniformly in time (Theorem~\ref{theo:global}), which implies that the
nonlinear scattering map is continuous with respect to $\si$
(Corollary~\ref{cor:scattering}). 
\smallbreak

It is well known that as soon as $\si\le 1/d$, \eqref{eq:asym-lin} can
hold only in the trivial case of the zero solution, $\phi=u=u_+=0$, as
proven in \cite{Barab}. In the critical case $\si=1/d$, as established
initially in \cite{Ozawa91,GO93,HN98}, the large time behavior of $u$
is described at leading order by a nonlinear phase modification of the
free dynamics. If $d=1$ and $\phi$ is sufficiently small, smooth and
localized, there exists $u_+\in L^2(\R)$ such that 
\begin{equation*}
  \left\|u(t)-e^{i\Phi_+(t)}e^{i\frac{t}{2}\Delta} u_+\right\|_{L^2(\R^d)}\Tend t
  \infty 0, \quad \text{where }\Phi_+(t,x) = -\left\lvert \widehat
    u_+\(\frac{x}{t}\)\right\rvert^2\ln t,
\end{equation*}
and the Fourier transform (on $\R^d$) is normalized as 
\begin{equation*}
  \widehat f(\xi)=\frac{1}{(2\pi)^{d/2}}\int_{\R^d}f(x)e^{-ix\cdot \xi}\dd
    x. 
\end{equation*}
This implies for instance that the flow map cannot be
continuous uniformly in time at $\si=1/d$ (as long range effects are
absent for $\si>1/d$, see \cite{BGTV23}). The general consensus is
that even for $\si<1/d$, modified scattering should be characterized
at leading order by a (nonlinear) phase modification, even
though no general proof of this fact seems to be available so far. Our
approach supports this train of thought: instead of considering $u$,
we first introduce a time dependent rescaling which counterbalances
the linear dispersive behavior, and we consider the squared modulus of this new
function, that is
\begin{equation*}
  \<t\>^d |u_\si(t,x\<t\>)|^2,\quad\text{where}\quad
  \<t\>=\sqrt{1+t^2}.  
\end{equation*}
Dividing this quantity by $\|\phi\|_{L^2}^2$ yields a time dependent
family of probability densities $(\varrho_\si(t))_{t\in \R}$. We show that this
family is continuous with respect to $\si$, uniformly in time $t\in
\R$, at any $\si\in (0,\frac{2}{(d-2)_+})$, in Wasserstein distance
$W_1$ (Theorem~\ref{theo:W1}), which implies uniform continuity in some
negative order Sobolev 
spaces  (Corollary~\ref{cor:interpolation}). In
particular, this property is  
insensitive to long range effects (when $\si\le 1/d$).

\subsubsection{Logarithmic nonlinearity}

In the limit $\si\to 0$, at points where $u\not =0$,
\begin{equation*}
  |u|^{2\si}=\exp \(\si \ln|u|^2\) = 1 + \si \ln|u|^2+\O(\si^2). 
\end{equation*}
Up to a gauge transform ($u\to u e^{it}$), we may replace the
nonlinearity in \eqref{eq:NLS} with $\(|u|^{2\si}-1\)u$. We may then
proceed like in \cite{GMS-p} where the stationary, focusing case is
considered, and let $\si\to 0$ in  
\begin{equation}
  \label{eq:rescaledNLS}
    i\d_t \u_\si +\frac{1}{2}\Delta \u_\si = \frac{1}{\si}\(|\u_\si|^{2\si}-1\)\u_\si\quad
    ;\quad \u_{\si\mid t=0}=\phi_\si.
  \end{equation}
Formally, if $\phi_\si\to \phi$ as $\si\to 0$,
$\u_\si$ is expected to converge to the solution of the 
logarithmic Schr\"odinger equation
\begin{equation}
  \label{eq:logNLS}
  i\d_t \u +\frac{1}{2}\Delta \u =\u\ln(|\u|^2)\quad ;\quad \u_{\mid t=0}=\phi.
\end{equation}
We denote by $\u_{\rm log}$ or $\u_0$ this solution. This equation was
introduced in \cite{BiMy76}, and the mathematical study of the Cauchy
problem \eqref{eq:logNLS} started in \cite{CaHa80} (with a different
sign in front of the nonlinearity though).
\begin{remark}
  One may adopt an alternative point of view. If $u$ solves \eqref{eq:NLS}, then
  \begin{equation*}
    \tilde u_\si(t,x) = \si^{1/(2\si)} u(t,x) e^{it/\si}
   \end{equation*}
   solves the PDE in \eqref{eq:rescaledNLS}, but with initial value
   $\si^{1/(2\si)} \phi$. The fact that the factor $ \si^{1/(2\si)} $
   is unbounded as $\si\to 0$
   is an alternative evidence that nonlinear effects must be enhanced
   in the nonlinear Schr\"odinger equation in order to get some
   convergence toward the logarithmic Schr\"odinger equation. 
 \end{remark}
 \begin{remark}
   If the continuity of the flow map is considered at $\si>0$
   (instead of $\si=0$), then it is essentially equivalent to consider
   \eqref{eq:NLS} or \eqref{eq:rescaledNLS}, since $\si\mapsto 1/\si$
   is continuous on $(0,\infty)$.  
 \end{remark}
Several aspects indicate
that the limit $\si\to 0$ in \eqref{eq:rescaledNLS} is more involved
than in the previous case. Indeed, it was proven in \cite{CaGa18} that
solutions to \eqref{eq:logNLS} obey an enhanced dispersive rate:
roughly speaking, instead of a decay rate of order $t^{-d/2}$ in
\eqref{eq:NLS}, solutions to \eqref{eq:logNLS} decay like
$\tau_0(t)^{-d/2}\approx t^{-d/2}\(\ln t\)^{-d/4}$. Moreover, rescaling the solution to
\eqref{eq:logNLS}  
to compensate this new dispersive rate, the large time behavior
exhibits a universal profile,
\begin{equation*}
  \tau_0(t)^d|\u_0(t,x\tau_0(t))|^2\Tend t \infty
  \|\phi\|_{L^2}^2\gamma^2 \text{ in Wasserstein distance }W_2,
\end{equation*}
where $\gamma(x)=e^{-|x|^2/2}$ is independent of $\phi$. 
Finally, the $H^1$-norm of any nonzero solution to
\eqref{eq:logNLS} is unbounded in the large time limit. These three
properties are in sharp contrast with the dynamical properties
associated to \eqref{eq:NLS}.

\subsection{Main results}
\label{sec:results}
Denote 
\begin{equation*}
  \Sigma= \left\{ f\in H^1(\R^d),\ \int_{\R^d}|x|^2
    |f(x)|^2\dd x<\infty\right\}, 
\end{equation*}
equipped with the norm
\begin{equation*}
  \|f\|_{\Sigma} = \|f\|_{L^2(\R^d)}+ \|\nabla
  f\|_{L^2(\R^d)}+\|xf\|_{L^2(\R^d)}. 
\end{equation*}

\subsubsection{Power nonlinearity}
Our first  result addresses the continuity of the flow map on
bounded time intervals:
\begin{theorem}[Local in time continuity]\label{theo:local}
  Let $0<\si<\frac{2}{(d-2)_+}$ and $(\phi_\nu)_{|\nu-\si|\le \eps}$
  bounded in
  $H^1(\R^d)$ for some $\eps>0$ such that $0<\si-\eps$ and $\si+\eps<
  \frac{2}{(d-2)_+}$. Denote by $u_\nu$ the 
  corresponding 
  solutions to the nonlinear Schr\"odinger equation
  \eqref{eq:NLS}, $u_\nu\in \cont(\R;H^1(\R^d))$. Let $T>0$:
  \begin{itemize}
  \item There exists $C$ such that, as $\nu\to \si$,
    \begin{align*}
      &\sup_{t\in [-T,T]}\|u_\nu(t)- u_\si(t)\|_{L^2(\R^d)}\le
        C\|\phi_\nu-\phi_\si\|_{L^2(\R^d)}+ C|\nu-\si|,\\
      & \sup_{t\in [-T,T]}\|u_\nu(t)- u_\si(t)\|_{H^1(\R^d)}\le
        C\|\phi_\nu-\phi_\si\|_{H^1(\R^d)}+C|\nu-\si|^\theta,
    \end{align*}
    for some $\theta>0$ (given by \eqref{eq:exp_theta_1}-\eqref{eq:exp_theta_2}), which can be taken equal to one if $2\si>1$.
  \item If in addition $(\phi_\nu)_{|\nu-\si|\le \eps}$ is
  bounded in $\Sigma$, then
    \begin{equation*}
      \sup_{t\in [-T,T]}\|u_\nu(t)- u_\si(t)\|_{\Sigma}\le
     C\|\phi_\nu-\phi_\si\|_{\Sigma}+   C|\nu-\si|^\theta,
      \end{equation*}
      for the same $\theta$ as above. 
  \end{itemize}
\end{theorem}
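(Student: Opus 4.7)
The natural strategy is to apply Strichartz estimates to the difference $w := u_\nu - u_\si$, after splitting the source term as
\[
F_\nu(u_\nu) - F_\si(u_\si) = \bigl[F_\nu(u_\nu) - F_\nu(u_\si)\bigr] + \bigl[F_\nu(u_\si) - F_\si(u_\si)\bigr],
\]
where $F_\alpha(z) := |z|^{2\alpha}z$. The first bracket is a classical difference of a single nonlinearity and will be absorbed into a Gronwall loop; the second is a deterministic source whose mixed Lebesgue norms must be controlled purely in terms of $|\nu-\si|$. Uniform $H^1$ boundedness of $(\phi_\nu)$, together with mass/energy conservation (defocusing, energy-subcritical) and Strichartz estimates, provides uniform-in-$\nu$ bounds of $u_\nu$ in $L^\infty_T H^1 \cap L^q_T W^{1,r}$ on $[-T,T]$ for any Strichartz admissible pair $(q,r)$, which I would use as a black box input.

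\emph{Source estimate.} Factoring $F_\nu(u_\si) - F_\si(u_\si) = |u_\si|^{2\si}u_\si(|u_\si|^{2(\nu-\si)} - 1)$, the identity $|u|^{2\nu} - |u|^{2\si} = 2\int_\si^\nu |u|^{2\alpha}\ln|u|\,\dd\alpha$ combined with the elementary bound $|\ln|u|| \le C_\delta(|u|^\delta + |u|^{-\delta})$ yields, for any small $\delta>0$ and $|\nu-\si|$ small enough,
\[
|F_\nu(u_\si) - F_\si(u_\si)| \le C|\nu-\si|\bigl(|u_\si|^{2\si+1+\delta} + |u_\si|^{2\si+1-\delta}\bigr).
\]
Since $\si \pm \delta/2$ remains energy-subcritical, Sobolev embedding converts the $L^\infty_T H^1$ bound on $u_\si$ into a uniform bound of this source in $L^{q'}_T L^{r'}$ for a suitable dual Strichartz pair. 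Combined with the pointwise estimate $|F_\nu(u_\nu) - F_\nu(u_\si)| \le C(|u_\nu|^{2\nu} + |u_\si|^{2\nu})|w|$ and Gronwall on the Duhamel formula, this gives the $L^2$ estimate with linear rate $C|\nu-\si|$.

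\emph{$H^1$ and $\Sigma$ bounds.} I then apply the same scheme to $\nabla w$. Differentiating the integral representation for the source and using $|\nabla|u|^{2\alpha}| \lesssim |u|^{2\alpha-1}|\nabla u|$ produces a bound of the form $C|\nu-\si|(|u_\si|^{2\si+\delta} + |u_\si|^{2\si-\delta})|\nabla u_\si|$ plus lower order terms, harmless by Hölder in energy-subcritical Strichartz spaces. The delicate term is the Lipschitz-type bound for $\nabla[F_\nu(u_\nu) - F_\nu(u_\si)]$: when $2\si > 1$ the map $z\mapsto|z|^{2\si}z$ is genuinely $C^1$, the standard expansion closes a Gronwall loop with $\theta = 1$, and the same computation works for the Galilean vector field $J(t) := x + it\nabla$ (which commutes with $i\d_t + \tfrac{1}{2}\Delta$ and satisfies $J(t)(|u|^{2\nu}u) = (\nu+1)|u|^{2\nu}J(t)u + \nu|u|^{2\nu-2}u^2\overline{J(t)u}$), yielding the $\Sigma$ bound.

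\emph{Main obstacle.} The genuinely delicate step is the low-power regime $2\si \le 1$. There, $z\mapsto|z|^{2\si}z$ fails to be $C^1$, so no Lipschitz-in-$H^1$ estimate is available for $F_\nu(u_\nu) - F_\nu(u_\si)$. I would fall back on a fractional Hölder-type estimate (e.g.\ $|F_\nu(u_\nu) - F_\nu(u_\si)| \le C|w|^{1+2\nu}$ for small enough powers), then interpolate the already obtained linear $L^2$ rate against the uniform $H^1$ (respectively $\Sigma$) bound to produce an $H^1$ (respectively $\Sigma$) estimate with the reduced exponent $\theta \in (0,1)$ announced in the statement. This interpolation step is where the explicit $\theta$ of \eqref{eq:exp_theta_1}--\eqref{eq:exp_theta_2} is negotiated, and it is the only genuinely subtle point in the proof.
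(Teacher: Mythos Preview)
Your approach is essentially the paper's: the same splitting $F_\nu(u_\nu)-F_\si(u_\si)=[F_\nu(u_\nu)-F_\nu(u_\si)]+[F_\nu(u_\si)-F_\si(u_\si)]$, the same Taylor/logarithm bound on the source, the same Strichartz--Gronwall loop for $L^2$, and the same treatment of $H^1$ and $\Sigma$ via $A\in\{\nabla,J(t)\}$ with interpolation in the low-power case. One slip to correct: the pointwise estimate $|F_\nu(u_\nu)-F_\nu(u_\si)|\le C|w|^{1+2\nu}$ is false (take $u_\si=0$); the H\"older bound that actually does the work is $\bigl||u_\nu|^{2\nu}-|u_\si|^{2\nu}\bigr|\le |w|^{2\nu}$ for $2\nu<1$, applied to the cross term $(|u_\nu|^{2\nu}-|u_\si|^{2\nu})\,Au_\si$ after expanding $A[F_\nu(u_\nu)-F_\nu(u_\si)]$, and then your interpolation/Young step closes the loop exactly as in the paper.
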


In the case where the power $\si$ is sufficiently large, the previous
continuity can be made uniform in time:
\begin{theorem}[Global continuity in the (very) short range
  case]\label{theo:global} 
  Let $\si_0(d)<\si<\frac{2}{(d-2)_+}$ and $(\phi_\nu)_{|\nu-\si|\le
    \eps}$  bounded in $\Sigma$ with $\eps>0$ such that
  $\si-\eps>\si_0(d)$ and $\si+\eps<\frac{2}{(d-2)_+}$, where 
  \begin{equation*}
   \si_0(d)= \frac{2-d+\sqrt{d^2+12d+4}}{4d}\in
    \(\frac{1}{d},\frac{2}{d}\). 
  \end{equation*}
  Then we have the global in time estimate:
  \begin{equation*}
    \sup_{t\in \R}\left\|e^{-i\frac{t}{2}\Delta}\(u_\nu(t)-
      u_\si(t)\)\right\|_{\Sigma}\le
     C\|\phi_\nu-\phi_\si\|_{\Sigma}+   C|\nu-\si|^\theta,
   \end{equation*}
    for the same $\theta$ as in  Theorem~\ref{theo:local}.  
\end{theorem}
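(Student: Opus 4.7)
The plan is to pass to the interaction picture and exploit the Galilean vector field. Set $w_\nu(t)=e^{-it\Delta/2}(u_\nu(t)-u_\si(t))$. Since $e^{-it\Delta/2}$ is unitary on $L^2$, commutes with $\nabla$, and satisfies $x\,e^{-it\Delta/2}=e^{-it\Delta/2}J(t)$ where $J(t):=x+it\nabla$ is the Galilean operator, one has
\[
\|w_\nu(t)\|_\Sigma=\|u_\nu(t)-u_\si(t)\|_{L^2}+\|\nabla(u_\nu-u_\si)(t)\|_{L^2}+\|J(t)(u_\nu-u_\si)(t)\|_{L^2}.
\]
Because $J(t)$ commutes with $i\d_t+\tfrac{1}{2}\Delta$ and behaves as a modulated derivative --- in particular the Gagliardo--Nirenberg-type inequality $\|f\|_{L^p}\le C\<t\>^{-d(1/2-1/p)}\|f\|_{L^2}^{1-\theta_p}\|J(t)f\|_{L^2}^{\theta_p}$ holds for $p$ in the admissible range --- Strichartz estimates apply to $J(t)w_\nu$ as easily as to $w_\nu$ itself.

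The first step is a uniform-in-$\nu$ global a priori bound of the form $\sup_{t\in\R}\bigl(\|u_\nu(t)\|_\Sigma+\|J(t)u_\nu(t)\|_{L^2}\bigr)\le C$ for all $\nu$ close to $\si$. Inserting the Gagliardo--Nirenberg inequality into a bootstrap based on Strichartz estimates applied to the equations satisfied by $u_\nu$, $\nabla u_\nu$ and $J(t)u_\nu$ produces a weight of the form $t\mapsto \|u_\nu(t)\|_{L^{2\si+2}}^{2\si}$ whose integrability on $\R$ is equivalent to $\si>\si_0(d)$; equivalently, $\si_0(d)$ is the positive root of $2d\si^2+(d-2)\si-2=0$, which is exactly the condition for the Strichartz exponent adapted to the nonlinearity $|u|^{2\si}u$ to become time-admissible on the whole real line.

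For the difference itself, the Duhamel identity reads
\[
w_\nu(t)=\phi_\nu-\phi_\si \;-\; i\int_0^t e^{-is\Delta/2}\bigl(|u_\nu|^{2\nu}u_\nu-|u_\si|^{2\si}u_\si\bigr)(s)\,\dd s.
\]
Split the integrand as $(|u_\nu|^{2\nu}u_\nu-|u_\si|^{2\nu}u_\si)+(|u_\si|^{2\nu}-|u_\si|^{2\si})u_\si$. The first term is Lipschitz in $u_\nu-u_\si$ with Lipschitz coefficient controlled by $\|u_\nu\|_{L^r}^{2\nu}+\|u_\si\|_{L^r}^{2\nu}$ for a suitable $r$, which, after interpolation with the $H^1$-bounds and the $J(t)$-driven pointwise decay of $L^r$-norms, is time-integrable against a Strichartz-dual norm of $w_\nu$. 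The second (source) term is handled exactly as in Theorem~\ref{theo:local}: from the pointwise bound $\bigl\lvert|u_\si|^{2\nu}-|u_\si|^{2\si}\bigr\rvert\le C|\nu-\si||u_\si|^{2\si_\star}\bigl\lvert\ln|u_\si|\bigr\rvert$ combined with the uniform $\Sigma$-bound on $u_\si$, one gets an $L^1_t$ source dominated by $C|\nu-\si|^\theta$ with the same exponent $\theta$ as in the local result. Applying Strichartz estimates to both $w_\nu$ and $J(t)w_\nu$ and invoking Gronwall with the integrable weight from the first step yields the announced bound.

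The main technical obstacle is the uniform-in-$\nu$ \emph{global} control of $\|J(t)u_\nu\|_{L^2}$ on $\R$. Contrary to $\|u_\nu(t)\|_{H^1}$, which is automatic from conservation of energy, this quantity is not conserved and must be obtained by combining Strichartz estimates with the pseudoconformal energy identity and a delicate bootstrap; it is precisely and only here that the explicit threshold $\si_0(d)$ enters, for the integrability reason discussed above.
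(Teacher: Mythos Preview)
Your approach is essentially the paper's: pass to $J(t)=x+it\nabla$, exploit the weighted Gagliardo--Nirenberg decay it provides, split the nonlinear difference into a Lipschitz part and a source, and use that the resulting ``potential'' factor is globally time-integrable precisely when $\nu>\si_0(d)$ (the positive root of $2d\si^2+(d-2)\si-2=0$, which in the paper appears as the condition $k\delta>1$ for the Strichartz index $k$ and the decay rate $\delta$).

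Two minor points where your description diverges from the paper. First, the global a priori bound on $\|J(t)u_\nu\|_{L^2}$ does not require a Strichartz bootstrap: it follows directly from the pseudoconformal evolution law \eqref{eq:pseudo-conf} (Ginibre--Velo), by integrating in time and applying Gr\"onwall to the potential term, which is the classical argument recalled in Section~\ref{sec:est-gen}. Second, for the difference itself the paper does not quite invoke Gr\"onwall: it decomposes $\R$ into finitely many intervals on each of which the potential factor $\|u_\nu\|_{L^k(I;L^{r_\nu})}^{2\nu}$ is small enough to be absorbed into the left-hand side of the Strichartz estimate, then iterates. This interval-splitting trick and the Gr\"onwall argument you sketch are morally equivalent, but the former is what meshes naturally with Strichartz norms.
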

We briefly recall the notion of nonlinear scattering operator. For a
given asymptotic state $u_-$, consider the equation \eqref{eq:NLS}
where the initial value is replaced by a final value (at infinite time),
\begin{equation*}
  e^{-i\frac{t}{2}\Delta}u(t)\Big|_{t=-\infty}=u_-. 
\end{equation*}
If this new problem has a solution defined up to $t=0$, then the map
$W_-:u_-\mapsto u_{\mid t=0}$ is called wave operator. Conversely, if the
solution to the Cauchy problem \eqref{eq:NLS} behaves asymptotically
linearly,
\begin{equation*}
  u(t)\Eq t {+\infty} e^{i\frac{t}{2}\Delta}u_+,
\end{equation*}
for some asymptotic state $u_+$, then $u_+=W_+^{-1}u_{\mid t=0}$ and
the scattering operator $S=W_+^{-1}\circ W_-$ is the map $u_-\mapsto
u_+$. This map is well defined on $\Sigma$ for $\si>\si_0(d)$, as
established initially in \cite{GV79Scatt} (see also \cite{CW92,NakanishiOzawa}). In view of the present
context, we emphasize its dependence upon $\si$ by using the notation
$S_\si$. 
\begin{corollary}[Continuity of the scattering
  operator]\label{cor:scattering} 
  Let $\si_0(d)<\si<\frac{2}{(d-2)_+}$ and
  $\(u_{-,_\nu}\)_{|\nu-\si|\le \eps}$ be a family in $\Sigma$ for
  some $\eps>0$. The
  scattering operator is continuous at $\si$ in the sense 
  that if
  \begin{equation*}
    \|u_{-,\si}-u_{-,_\nu}\|_\Sigma\Tend \nu \si 0,
  \end{equation*}
  then
  \begin{equation*}
    \|S_\si u_{-,\si}-S_\nu u_{-,_\nu}\|_\Sigma=\
    \|u_{+,\si}-u_{+,_\nu}\|_\Sigma \Tend \nu \si 0, 
  \end{equation*}
\end{corollary}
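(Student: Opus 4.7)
The plan is to reduce the corollary to Theorem~\ref{theo:global} via the identification $\phi_\nu := W_-^\nu u_{-,\nu} = u_\nu(0)$. Once the convergence $\phi_\nu \to \phi_\sigma$ in $\Sigma$ is established, taking $t \to +\infty$ in the global estimate of Theorem~\ref{theo:global} yields
\begin{equation*}
\|u_{+,\nu} - u_{+,\sigma}\|_\Sigma \le C\|\phi_\nu - \phi_\sigma\|_\Sigma + C|\nu - \sigma|^\theta \Tend{\nu}{\sigma} 0,
\end{equation*}
which is the claim. The task thereby reduces to proving the continuity of the wave operator at $(\sigma, u_{-,\sigma})$ with respect to both $\nu$ and the scattering datum.

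To establish this continuity, I would approximate $\phi_\nu$ by solutions of Cauchy problems at finite negative times. For $T>0$, let $u_\nu^T$ be the solution of \eqref{eq:NLS} with power $\nu$ and initial condition $u_\nu^T(-T) = e^{-iT\Delta/2} u_{-,\nu}$, and set $\phi_\nu^T := u_\nu^T(0)$. The Ginibre--Velo scattering theory (valid since $\nu > \sigma_0(d)$) ensures that $\phi_\nu^T \to \phi_\nu$ in $\Sigma$ as $T \to \infty$; the underlying fixed-point construction, run on a common ball in $\Sigma$ for $\nu$ in a small neighborhood of $\sigma$, makes this convergence uniform in $\nu$. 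For each \emph{fixed} $T$, the initial data $e^{-iT\Delta/2} u_{-,\nu}$ is bounded in $\Sigma$ uniformly in $\nu$ (with a $T$-dependent constant, controlled by $(1+T)\|u_{-,\nu}\|_\Sigma$) and converges in $\Sigma$ to $e^{-iT\Delta/2} u_{-,\sigma}$ as $\nu \to \sigma$; thus Theorem~\ref{theo:local} applied on $[-T,0]$ yields $\phi_\nu^T \to \phi_\sigma^T$ in $\Sigma$. A standard three-$\eta$ argument combining these two limits gives $\phi_\nu \to \phi_\sigma$ in $\Sigma$, and the corollary follows.

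The main obstacle is verifying the uniformity in $\nu$ of the approximation $\phi_\nu^T \to \phi_\nu$. Concretely, one must check that the Ginibre--Velo contraction producing the wave operator can be set up on a common ball, with Strichartz constants, admissible exponents, and the nonlinear exponent $2\nu$ depending continuously on $\nu$ near $\sigma$ in the short-range regime $\sigma > \sigma_0(d)$. This is structurally similar to the bookkeeping carried out for Theorem~\ref{theo:local}, transported from a bounded time interval to $(-\infty,0]$ by the short-range decay of the Duhamel term.
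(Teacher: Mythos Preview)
Your proposal is correct and follows the same overall strategy as the paper: decompose $S_\nu = W_+^{-1}\circ W_-$, reduce to continuity of the wave operator $u_{-,\nu}\mapsto \phi_\nu$, and then apply Theorem~\ref{theo:global} to pass to $u_{+,\nu}$. The only minor difference is that the paper argues continuity of $W_-$ directly from the Duhamel formula on $(-\infty,0]$ (observing that the integrability estimate \eqref{eq:integrability-short} makes the estimates behind Theorem~\ref{theo:global} go through verbatim with data prescribed at $t=-\infty$), whereas you route through a finite-time approximation $\phi_\nu^T$ and Theorem~\ref{theo:local}; both hinge on the same short-range decay and are equivalent in substance.
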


For $\mu_1$ and $\mu_2$ probability measures on $\R^d$,  the
Wasserstein (or Kantorovich–Rubinstein) distance $W_1$ can be
characterized by
\begin{equation}\label{eq:W1}
  W_1(\mu_1,\mu_2):= \sup \enstq{\int_{\R^d} \psi \dd (\mu_1 -
    \mu_2)}{ \psi \in \cont(\R^d)  \text{ and }  \|\psi\|_{\rm
      Lip}\le 1} , 
\end{equation}
where the Lipschitz semi-norm is defined by
\begin{equation*}
 \|\psi\|_{\rm Lip} =\sup_{x\not = y}\frac{|\psi(x)-\psi(y)|}{|x-y|}.  
\end{equation*}
See for instance \cite{Vi03}. As announced in the previous subsection,
we set
\begin{equation}\label{eq:rho}
  \varrho_\si(t,y) = \<t\>^d |u_\si(t,y\<t\>)|^2 \|\phi_\si\|_{L^2}^{-2}. 
\end{equation}
For each $t\in \R$, $\varrho_\si(t,\cdot)$ is a probability density, and
the change of space variable compensates the dispersion of the linear
Schr\"odinger equation.

\begin{theorem}[Global continuity for the rescaled
  modulus]\label{theo:W1} 
   Let $0<\si<\frac{2}{(d-2)_+}$ and $(\phi_\nu)_{|\nu-\si|\le \eps}$
  bounded in $\Sigma$ for some $\eps>0$ such that $0<\si-\eps$ and
  $\si+\eps <  \frac{2}{(d-2)_+}$. If 
   \begin{equation*}
     \phi_\nu\Tend \nu \si \phi_\si\quad\text{in }L^2(\R^d),
   \end{equation*}
   then, with $\varrho_\si$ given by \eqref{eq:rho},
   \begin{equation*}
     \sup_{t\in \R}W_1\(\varrho_\nu(t),\varrho_\si(t)\)\Tend \nu \si 0. 
   \end{equation*}
\end{theorem}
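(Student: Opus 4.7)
The plan is to combine local continuity in $W_1$ with uniform equicontinuity of $\varrho_\si(\cdot)$ as $t\to\pm\infty$, via the mass continuity equation written in the rescaled coordinates.

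\textbf{Local continuity.} The elementary Wasserstein bound
\begin{equation*}
  W_1(\mu_1,\mu_2)\le 2\sqrt{M_2\,\|\mu_1-\mu_2\|_{\mathrm{TV}}},
\end{equation*}
valid for probability measures whose second moments are bounded by $M_2$, combined with
\begin{equation*}
  \|\varrho_\nu(t)-\varrho_\si(t)\|_{L^1(\R^d)}\le C\|u_\nu(t)-u_\si(t)\|_{L^2}+C\|\phi_\nu-\phi_\si\|_{L^2}
\end{equation*}
(from mass conservation and $\bigl||a|^2-|b|^2\bigr|\le |a-b|(|a|+|b|)$), reduces uniform local-in-time $W_1$-continuity to the $L^2$ bound in Theorem~\ref{theo:local}. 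The second moment
\begin{equation*}
\int_{\R^d}|y|^2\varrho_\si(t,y)\,\dd y=\frac{\|xu_\si(t)\|_{L^2}^2}{\<t\>^2\|\phi_\si\|_{L^2}^2}
\end{equation*}
is uniformly bounded in $t$ and in $\nu$ near $\si$, via $\|xu_\si(t)\|_{L^2}\le \|J(t)u_\si\|_{L^2}+|t|\|\nabla u_\si\|_{L^2}\le C\<t\>$, where $J(t):=x+it\nabla$.

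\textbf{Equicontinuity at infinity.} From $\d_t|u_\si|^2+\diver\IM(\bar u_\si\nabla u_\si)=0$, the change of variable $y=x/\<t\>$ yields
\begin{equation*}
  \d_t\varrho_\si+\diver_y\bigl(\tilde K_\si/\|\phi_\si\|_{L^2}^2\bigr)=0,\quad \tilde K_\si(t,y)=\<t\>^{d-3}\bigl[\IM(\bar u_\si\nabla u_\si)-t\RE(\bar u_\si J(t)u_\si)\bigr](t,y\<t\>),
\end{equation*}
after rearranging via $x|u|^2=\RE(\bar u J(t)u)+t\IM(\bar u\nabla u)$ and $\<t\>^2-t^2=1$. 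Kantorovich duality then gives
\begin{equation*}
  W_1(\varrho_\si(t_1),\varrho_\si(t_2))\le \|\phi_\si\|_{L^2}^{-2}\int_{t_1}^{t_2}\|\tilde K_\si(t)\|_{L^1_y}\,\dd t,
\end{equation*}
and Cauchy-Schwarz yields $\|\tilde K_\si(t)\|_{L^1_y}\le C\<t\>^{-3}\bigl(1+|t|\,\|J(t)u_\si\|_{L^2}\bigr)$. The pseudo-conformal identity gives $\|J(t)u_\si\|_{L^2}\le C$ uniformly in $t$ for $\si\ge 2/d$, and $\|J(t)u_\si\|_{L^2}\le C\<t\>^{1-d\si/2}$ for $\si<2/d$ upon injecting the dispersive decay $\|u_\si(t)\|_{L^{2\si+2}}^{2\si+2}\lesssim \<t\>^{-d\si}$. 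In either case
\begin{equation*}
  \|\tilde K_\si(t)\|_{L^1_y}\lesssim \<t\>^{-1-\min(d\si/2,1)},
\end{equation*}
integrable on $\R$ uniformly in $\nu$ in a small neighborhood of $\si$.

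\textbf{Conclusion and main obstacle.} Given $\eps>0$, choose $T>0$ so that $\int_{|t|>T}\<t\>^{-1-\min(d\si/2,1)}\,\dd t<\eps$ uniformly in $\nu$ close to $\si$; apply the local step on $|t|\le T$ and the triangle inequality through $\varrho_{\nu,\si}(\pm T)$ for $|t|>T$. The most delicate input is the dispersive $L^{2\si+2}$-decay in the long-range regime $\si\le 1/d$, where the pseudo-conformal identity does not, on its own, yield the sharp rate $\|u_\si(t)\|_{L^{2\si+2}}^{2\si+2}\lesssim \<t\>^{-d\si}$; however, this is precisely consistent with the guiding observation---which motivates working with $|u_\si|^2$ rather than $u_\si$ itself---that the squared modulus is insensitive to the phase modifications of modified scattering and thus inherits the same $L^p$-decay as the underlying linear Schr\"odinger flow.
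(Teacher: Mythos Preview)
Your argument is correct and follows the same two-step scheme as the paper: local $W_1$-continuity via Theorem~\ref{theo:local} combined with a uniform second-moment bound, and equicontinuity of $t\mapsto\varrho_\nu(t)$ at infinity via the continuity equation and the integrable decay of the rescaled current. The paper packages the large-time step slightly differently, introducing $v_\si$ through
\[
u_\si(t,x)=\<t\>^{-d/2}\,v_\si\!\(t,\tfrac{x}{\<t\>}\)\exp\!\(i\,\tfrac{t}{1+t^2}\tfrac{|x|^2}{2}\),
\]
so that the continuity equation reads $\d_t\varrho_\si+\<t\>^{-2}\diver_y j_\si=0$ with $j_\si=\|\phi_\si\|_{L^2}^{-2}\IM(\bar v_\si\nabla v_\si)$; your decomposition $\tilde K_\si=\<t\>^{d-3}\bigl[\IM(\bar u_\si\nabla u_\si)-t\,\RE(\bar u_\si J(t)u_\si)\bigr]$ is exactly this current once one unpacks the quadratic phase, and both lead to the same integrand $\<t\>^{-1-\min(1,d\si/2)}$.

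The one point that needs correcting is your final paragraph. There is no obstacle and no need to invoke any heuristic about modified scattering. For every $0<\si<2/d$ (including the long-range regime $\si\le 1/d$), the pseudoconformal evolution law \eqref{eq:pseudo-conf} alone yields both bounds at once: setting
\[
F(t)=\tfrac12\|J(t)u_\si\|_{L^2}^2+\tfrac{t^2}{\si+1}\|u_\si(t)\|_{L^{2\si+2}}^{2\si+2},
\]
one has, for $t>0$,
\[
F'(t)=\frac{2-d\si}{t}\cdot\frac{t^2}{\si+1}\|u_\si(t)\|_{L^{2\si+2}}^{2\si+2}\le \frac{2-d\si}{t}\,F(t),
\]
and Gr\"onwall gives $F(t)\lesssim \<t\>^{2-d\si}$. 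This simultaneously delivers $\|J(t)u_\si\|_{L^2}\lesssim\<t\>^{1-d\si/2}$ and $\|u_\si(t)\|_{L^{2\si+2}}^{2\si+2}\lesssim\<t\>^{-d\si}$ (this is precisely Lemma~\ref{lem:apriori-pseudoconf}). So the ``delicate input'' you flag is not an external assumption but a direct consequence of the very identity you already used; you should simply replace the last paragraph by this Gr\"onwall step.
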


\begin{corollary}\label{cor:interpolation}
  Under the assumptions of Theorem~\ref{theo:W1}, for any
  $s>\frac{1+d}{2}$,
  \begin{equation*}
     \sup_{t\in \R}\|\varrho_\nu(t)-\varrho_\si(t)\|_{H^{-s}(\R^d)}\Tend \nu \si 0. 
   \end{equation*}
   We infer,
   \begin{itemize}
   \item If $\si>2/d$,
     \begin{itemize}
     \item When $d=1$, for any $s\in [0,1)$ and $p\in (1,\infty]$,
      \begin{equation*}
        \sup_{t\in \R}\|\varrho_\nu(t)-\varrho_\si(t)\|_{H^{s}(\R)}+
        \sup_{t\in \R}\|\varrho_\nu(t)-\varrho_\si(t)\|_{L^p(\R)}\Tend \nu \si 0. 
   \end{equation*}  
 \item When $d=2$, for any $p\in (1,\infty)$,
   \begin{equation*}
        \sup_{t\in \R}\|\varrho_\nu(t)-\varrho_\si(t)\|_{L^p(\R^2)}\Tend \nu \si 0. 
   \end{equation*}  
     \item When $d\ge 3$, there exists $p(d)\in (1,2)$ such that for
       all $p\in (1,p(d)]$,
    \begin{equation*}
        \sup_{t\in \R}\|\varrho_\nu(t)-\varrho_\si(t)\|_{L^p(\R^d)}\Tend \nu \si 0. 
   \end{equation*}  
      \end{itemize}
   \item If $\si\le 2/d$, for all $0<\eps\ll 1$, there exists
     $q(\eps)\in (1,1+\si)$ such that
   \begin{equation*}
        \sup_{t\in
          \R}\|\varrho_\nu(t)-\varrho_\si(t)\|_{W^{-\eps,q}(\R^d)}\Tend \nu
        \si 0,\quad \forall q\in (1,q(\eps)]. 
   \end{equation*}     
   \end{itemize}
\end{corollary}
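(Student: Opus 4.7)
The plan is to first upgrade the uniform-in-time Wasserstein $W_1$ convergence of Theorem~\ref{theo:W1} to a uniform-in-time convergence in a negative Sobolev space, and then to combine this with uniform-in-time a priori bounds on $\varrho_\si$ via interpolation to obtain the $L^p$, $H^s$ and $W^{-\eps,q}$ consequences.

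\textbf{From $W_1$ to $H^{-s}$.} The Kantorovich dual formula \eqref{eq:W1} applied to the real functions $x\mapsto\cos(x\cdot\xi)$ and $x\mapsto\sin(x\cdot\xi)$, each of Lipschitz constant $|\xi|$, yields
\[
|\widehat{\varrho_\nu}(t,\xi)-\widehat{\varrho_\si}(t,\xi)|\le 2|\xi|\,W_1\bigl(\varrho_\nu(t),\varrho_\si(t)\bigr).
\]
Combined with the trivial bound $|\widehat{\varrho_\nu}-\widehat{\varrho_\si}|\le 2$ (since $\varrho_\nu,\varrho_\si$ are probability densities), Plancherel together with a Fourier splitting at $|\xi|\sim 1/W_1$ produce
\[
\|\varrho_\nu(t)-\varrho_\si(t)\|_{H^{-s}(\R^d)}^2\lesssim W_1\bigl(\varrho_\nu(t),\varrho_\si(t)\bigr)^{\min(2,\,2s-d)},
\]
valid for any $s>d/2$, and in particular for $s>(1+d)/2$. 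Taking the supremum in $t$ and invoking Theorem~\ref{theo:W1} establishes the first assertion.

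\textbf{A priori bounds on $\varrho_\si$.} For the subsequent statements, I combine this $H^{-s}$ convergence with uniform-in-time bounds on $\varrho_\si$. When $\si>2/d$, the pseudoconformal identity together with $\phi\in\Sigma$ keeps $\|J(t)u_\si(t)\|_{L^2}$ bounded uniformly in $t\in\R$, with $J(t)=x+it\nabla$. Writing $v(t,x)=e^{-i|x|^2/(2t)}u_\si(t,x)$, one has $|v|=|u_\si|$ and $\|\nabla v\|_{L^2}=\|J(t)u_\si\|_{L^2}/|t|$, so Gagliardo--Nirenberg applied to $v$ yields $\|u_\si(t)\|_{L^{2p}}\lesssim|t|^{-d(p-1)/(2p)}$ in the Gagliardo--Nirenberg range. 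After the rescaling \eqref{eq:rho} this transforms into a uniform-in-time estimate $\|\varrho_\si(t)\|_{L^p}\lesssim 1$ for $p\in[1,d/(d-2)_+]$. In dimension one, one further obtains a uniform-in-time $H^1$ bound by writing $\partial_xu=(J(t)u-xu)/(it)$ and exploiting that $x|u|^2\in\R$, so that $\RE(\bar u\partial_xu)=\IM(\bar uJ(t)u)/t$; a Hölder estimate then gives $\|\nabla\varrho_\si(t)\|_{L^2}\lesssim 1$. When $\si\le 2/d$, only the trivial mass bound $\|\varrho_\si(t)\|_{L^1}=1$ survives.

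\textbf{Interpolation.} For $\si>2/d$ with $d=1$, interpolating between $H^{-s}$ and $H^1$ converts the first-step convergence into uniform-in-time convergence in $H^\tau$ for every $\tau\in[0,1)$, from which Sobolev embedding yields the $L^p$-convergence claimed for $p\in(1,\infty]$. For $\si>2/d$ in higher dimensions, complex interpolation between the uniform $L^{p_{\max}}$ bound (with $p_{\max}=d/(d-2)_+$, understood as any finite exponent when $d=2$) and the $H^{-s}$-convergence produces the $L^p$-convergence in the stated ranges. For $\si\le 2/d$, the complex interpolation identity $[L^1,H^{-s}]_\theta=W^{-\theta s,\,2/(2-\theta)}$ applied with $\eps=\theta s$ small (so that $q(\eps):=2/(2-\eps/s)$ lies in $(1,1+\si)$) gives
\[
\sup_{t\in\R}\|\varrho_\nu(t)-\varrho_\si(t)\|_{W^{-\eps,q(\eps)}}\lesssim\Bigl(\sup_{t\in\R}\|\varrho_\nu(t)-\varrho_\si(t)\|_{H^{-s}}\Bigr)^{\eps/s}\Tend{\nu}{\si}0.
\]
The delicate point is obtaining the uniform-in-time Sobolev control on $\varrho_\si$ in the previous step: the naive estimate $\|\RE(\bar u\partial u)\|_{L^2}\le\|u\|_\infty\|\nabla u\|_{L^2}$ only gives a time-growing bound, and one must systematically use $\partial=(J(t)-x)/(it)$ together with the cancellation $x|u|^2\in\R$ to capitalise on the pseudoconformal decay of $J(t)u_\si$. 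For $\si\le 2/d$ no such sharp bound is available, which is precisely what forces the small regularity loss $\eps>0$ in the last statement of the corollary.
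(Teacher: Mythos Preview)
Your argument for the $H^{-s}$ convergence is fine (and equivalent to the lemma the paper cites), and your treatment of the cases $d=1$, $\si>2/d$ and $\si\le 2/d$ is correct. But there is a real gap in the case $\si>2/d$, $d\ge 2$.

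You claim that complex interpolation between the uniform $L^{p_{\max}}$ bound on $\varrho_\si$ and the $H^{-s}$ convergence ``produces the $L^p$-convergence in the stated ranges''. It does not. The interpolation space $[H^{-s},L^{p_{\max}}]_\theta$ has smoothness index $-s(1-\theta)<0$ for every $\theta\in(0,1)$, so you only obtain convergence in negative-order spaces $W^{-\varepsilon,q}$ --- exactly the conclusion of the $\si\le 2/d$ case, and strictly weaker than the $L^p$ convergence claimed in the corollary. To hit $L^{p_\theta}$ at some interior $\theta$, you need a uniform bound on $\varrho_\si$ in a space of \emph{positive} regularity.

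The paper supplies this by observing that $\|\nabla v_\si(t)\|_{L^2}$ is bounded uniformly in $t$ when $\si\ge 2/d$ (this is \eqref{eq:v-apriori-pseudo2}), hence by H\"older $\varrho_\si\in L^\infty(\R;W^{1,p})$ for $p$ up to $d/(d-1)$ (or $p<2$ when $d=2$). Interpolating $[H^{-s},W^{1,p}]_\theta$ then crosses $s_\theta=0$ at an interior $\theta$, which is what gives genuine $L^p$ convergence. In fact your own $d=1$ computation --- the cancellation $\RE(\bar u\nabla u)=\IM(\bar u J(t)u)/t$ combined with the decay of $\|u\|_{L^r}$ --- works verbatim in any dimension and, after the rescaling, yields exactly such a uniform $W^{1,p}$ bound on $\varrho_\si$. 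You simply neglected to carry it over to $d\ge 2$; as written, the $L^p$ conclusion for those dimensions is unsupported.
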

The values of the above Lebesgue indices $p(d)$ and $q(\eps)$ can
actually be computed, as it should be clear from the proof in
Section~\ref{sec:interpolation}. Similarly, convergence holds in other
Sobolev spaces, but we have chosen to make the statements as simple as
possible in Corollary~\ref{cor:interpolation}. 

\subsubsection{Logarithmic nonlinearity}
Like for the case of power nonlinearity, we first start with strong
convergence locally in time, before considering global in time
convergence, in Wasserstein distance. Recall that for the convergence
toward the logarithmic Schr\"odinger equation, we consider
\eqref{eq:rescaledNLS} (as opposed to \eqref{eq:NLS}). 
\begin{theorem}\label{theo:log-loc-temps}
  Let $\eps>0$ and $(\phi_\si)_{0\le \si\le \eps}$ bounded in $\Sigma$. Let $\u_\si$ denote the solution to
  \eqref{eq:rescaledNLS}, and $\u_0$ the solution to
  \eqref{eq:logNLS}, with initial data $\phi_\si$ and $\phi_0$
  respectively. There exists $\eps_0>0$ such that for all $\si\in 
  (0,\eps_0)$, the following holds. There exist $C_0,C_1>0$ such that
  for any $T>0$,  
  \begin{equation*}
  \sup_{t\in [-T,T]}  \|\u_\si(t)-\u_0(t)\|_{L^2(\R^d)}\le \(C_1 \si +
  \|\phi_\si-\phi_0\|_{L^2(\R^d)}\) e^{C_0T},
\end{equation*}
and, if $\phi_\si\to \phi_0$ in $L^2(\R^d)$, for any $\theta\in (0,1)$,
\begin{equation*}
  \sup_{|t|\le \frac{\theta}{C_0}\ln\frac{1}{\si}}\|\u_\si(t)-\u_0(t)\|_{L^2(\R^d)}\Tend \si
  0 0. 
\end{equation*}
\end{theorem}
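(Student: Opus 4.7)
The plan is to derive a Gronwall-type $L^2$ estimate for the error $w_\si := \u_\si - \u_0$, whose source term, of size $O(\si)$, quantifies the discrepancy between the two nonlinearities. As a preliminary, I would establish a uniform bound on $(\u_\si)_{0\le \si\le \eps_0}$ in $L^\infty_{\rm loc}(\R; \Sigma)$: for \eqref{eq:rescaledNLS} this follows from the conservation of the associated energy, whose potential part $\frac{1}{\si(\si+1)} \int (|u|^{2\si+2} - (\si+1)|u|^2 + \si) \, \dd x$ converges as $\si \to 0$ to the logarithmic energy $\int |u|^2(\ln |u|^2 - 1)\, \dd x$ of \eqref{eq:logNLS}, together with the standard momentum/virial identity used to control $\|xu\|_{L^2}$ through the operator $x + it\nabla$; for $\si = 0$ the corresponding bound is classical. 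Sobolev embedding then yields uniform $L^p$ control of both $\u_\si$ and $\u_0$ for $p \in [2, \tfrac{2d}{(d-2)_+})$.

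Subtracting \eqref{eq:rescaledNLS} and \eqref{eq:logNLS} gives
\begin{equation*}
 i \d_t w_\si + \tfrac{1}{2} \Delta w_\si = \bigl[F_\si(\u_\si) - F_0(\u_\si)\bigr] + \bigl[F_0(\u_\si) - F_0(\u_0)\bigr],
\end{equation*}
with $F_\si(z) = \frac{1}{\si}(|z|^{2\si} - 1) z$, $F_0(z) = z \ln |z|^2$ and initial datum $\phi_\si - \phi_0$. Pairing with $\bar w_\si$, integrating and taking the imaginary part kills the Laplacian contribution. The second bracket is handled via the Cazenave--Haraux inequality $\bigl|\IM\bigl[(F_0(z_1) - F_0(z_2))(\bar z_1 - \bar z_2)\bigr]\bigr| \le 4 |z_1 - z_2|^2$, which produces a contribution $\le C_0 \|w_\si\|_{L^2}^2$ with a universal constant insensitive to $\si$.

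The technical heart of the argument is the first bracket, where Taylor's formula for $\si \mapsto a^\si$ yields
\begin{equation*}
 \Bigl| \tfrac{a^\si - 1}{\si} - \ln a \Bigr| \le \tfrac{\si}{2} \max(1, a^\si) (\ln a)^2 \qquad (a > 0),
\end{equation*}
so that $\bigl| F_\si(\u_\si) - F_0(\u_\si) \bigr| \le C \si \, |\u_\si| \max(1, |\u_\si|^{2\si}) (\ln |\u_\si|^2)^2$. I would then show that the right-hand side remains in $L^2(\R^d)$ uniformly in $\si$: on $\{|\u_\si| \ge 1\}$ the logarithm is dominated by any small power of $|\u_\si|$, absorbed by Sobolev ($\u_\si \in L^p$ for some $p > 2$), while on $\{|\u_\si| < 1\}$ the logarithmic singularity at the vacuum is compensated by $|\u_\si|^{1 - \delta}$ for small $\delta > 0$, integrable thanks to the $\Sigma$ bound from the preliminary step. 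Cauchy--Schwarz then yields a contribution $\le C_1 \si \|w_\si\|_{L^2}$, and Gronwall on $\|w_\si\|_{L^2}$ gives the announced bound $\|w_\si(t)\|_{L^2} \le (C_1 \si + \|\phi_\si - \phi_0\|_{L^2}) e^{C_0 |t|}$.

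For the second claim, plugging $T(\si) = \frac{\theta}{C_0} \ln \frac{1}{\si}$ into this inequality gives $C_1 \si^{1 - \theta} + \|\phi_\si - \phi_0\|_{L^2} \si^{-\theta}$; the first summand vanishes since $\theta < 1$, and the second tends to $0$ by the assumed convergence of the initial data (combined if necessary with the standard $L^2$-Lipschitz continuity of the logarithmic flow on a fixed, $\si$-independent window, so as to trade part of the Gronwall growth for a better rate on the datum defect). The main obstacle is the consistency estimate: the quadratic logarithm $(\ln|\u_\si|^2)^2$ produced by the Taylor remainder is borderline both at the vacuum set $\{\u_\si = 0\}$ and at infinity, and controlling it uniformly in $L^2$ as $\si \to 0$ relies crucially on the uniform $\Sigma$ bound.
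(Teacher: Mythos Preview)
Your approach is essentially the paper's: the same splitting of the nonlinearity into the log-Lipschitz part (handled via Cazenave--Haraux, Lemma~\ref{lem:CH}) and the consistency source $S_\si=F_\si(\u_\si)-F_0(\u_\si)$, the same Taylor bound on $S_\si$, control of $\|S_\si\|_{L^2}$ via uniform $\Sigma$-bounds (the paper routes through Lemma~\ref{lem:apriori-unif-si-t} and the dual Gagliardo--Nirenberg inequality~\eqref{eq:GNdual} where you invoke energy conservation and Sobolev directly), and the same Gronwall conclusion. One slip: your potential $\frac{1}{\si(\si+1)}\int(|u|^{2\si+2}-(\si+1)|u|^2+\si)\,\dd x$ diverges on $\R^d$ because of the constant $+\si$; the correct renormalization is $\frac{1}{\si(\si+1)}\|\u_\si\|_{L^{2\si+2}}^{2\si+2}-\frac{1}{\si}\|\phi_\si\|_{L^2}^2$, using mass conservation.
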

The last convergence stated in Theorem~\ref{theo:log-loc-temps}
can be understood as a convergence up to some Ehrenfest time, by
analogy with the definition from the semiclassical propagation of
coherent states in (linear) Schr\"odinger equations, see
e.g. \cite{CoRo21}. 

\begin{theorem}\label{theo:log-temps-long}
  For $\si>0$, let $\tau_\si,\tau_0\in \cont^2(\R)$ solve the ordinary
  differential equations
  \begin{equation*}
    \ddot \tau_\si= \frac{1}{2\tau_\si^{d\si+1}},\quad \ddot \tau_0=
    \frac{1}{2\tau_0},\quad \tau_\si(0)=\tau_0(0)=1,\quad
    \dot\tau_\si(0)=\dot\tau_0(0)=0. 
  \end{equation*}
  Their large time behavior is given by
  \begin{equation*}
    \tau_\si(t)\Eq t \infty \frac{t}{\sqrt \si},\quad \tau_0(t)\Eq t
  \infty t\sqrt{\ln t}, 
\end{equation*}
and $\tau_\si$ converge to $\tau_0$: there exists $C>0$ such that
\begin{equation*}
  |\tau_0(t) - \tau_\si(t)|\le C\si t\(\ln
  (t+2)\)^{3/2},\quad\text{hence}\quad \tau_0(t) -
  \tau_\si(t)=o\(\tau_0(t)\) \text{ for }\si\ln t\to 0.  
\end{equation*}
Let $\eps>0$, $(\phi_\si)_{0\le \si\le \eps}$ bounded in $\Sigma$,
with $\phi_\si\to \phi_0$ in $L^2(\R^d)$ as $\si\to 0$. Define
\begin{equation*}
  \varrho_0(t,y) = \tau_0(t)^d\left\lvert
    \u_0\(t,y\tau_0(t)\)\right\rvert^2 \|\phi_0\|_{L^2}^{-2},\quad
   \varrho_\si(t,y) = \tau_\si(t)^d\left\lvert
    \u_\si\(t,y\tau_\si(t)\)\right\rvert^2 \|\phi_\si\|_{L^2}^{-2}.
\end{equation*}
We have the uniform in time convergence in Wasserstein distance:
\begin{equation} \label{eq:convergence_thm}
  \sup_{t\ge 0}W_1\(\varrho_\si,\varrho_0 \) \Tend \si 0 0 . 
\end{equation}
Moreover, if $\| \phi_{\sigma} - \phi_0 \|_{L^2(\R^d)}=\mathcal{O}(\sigma)$, we have the convergence rate
\begin{equation} \label{eq:convergence_rate_thm}
  \sup_{t\ge 0}W_1\(\varrho_\si,\varrho_0 \) \lesssim  \frac{1}{ \sqrt{\ln \ln \frac{1}{\sigma}}}.
\end{equation}
\end{theorem}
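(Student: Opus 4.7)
The plan has three parts: an ODE analysis of $\tau_\si,\tau_0$, a short-time coupling through Theorem \ref{theo:log-loc-temps}, and a long-time analysis of the rescaled modulus $\varrho_\si$ through a perturbed porous medium equation; optimally balancing the latter two pieces will produce the $1/\sqrt{\ln\ln(1/\si)}$ rate.

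For the ODEs, multiplying $\ddot\tau_\si=\frac{1}{2}\tau_\si^{-d\si-1}$ by $\dot\tau_\si$ yields the first integral $\dot\tau_\si^2=\frac{1}{d\si}\bigl(1-\tau_\si^{-d\si}\bigr)$, whence the asymptotics $\tau_\si(t)\sim t/\sqrt{d\si}$ follow by quadrature; the analogous $\dot\tau_0^2=\ln\tau_0$ gives $\tau_0(t)\sim t\sqrt{\ln t}$. Linearizing around $\tau_0$, the difference $\tau_0-\tau_\si$ satisfies a second-order ODE whose source term is $O(\si(\ln\tau_0)/\tau_0)$ after injecting the asymptotics; integrating twice produces the bound $|\tau_0(t)-\tau_\si(t)|\lesssim \si t(\ln(t+2))^{3/2}$.

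For \eqref{eq:convergence_thm}--\eqref{eq:convergence_rate_thm}, split $[0,\infty)=[0,T_\si]\cup[T_\si,\infty)$ for a threshold $T_\si$ to be chosen. On the short window, Theorem \ref{theo:log-loc-temps} provides $\|\u_\si(t)-\u_0(t)\|_{L^2}\lesssim(\si+\|\phi_\si-\phi_0\|_{L^2})e^{C_0t}$, which transfers — via the Lipschitz duality \eqref{eq:W1}, uniform second-moment bounds (coming from the $\Sigma$-bound via virial-type identities for \eqref{eq:rescaledNLS} and \eqref{eq:logNLS}), and the ODE estimate above used in the change of scale $x=y\tau_0(t)$ — into
\[
\sup_{t\in[0,T_\si]}W_1\bigl(\varrho_\si(t),\varrho_0(t)\bigr)\lesssim \bigl(\si+\|\phi_\si-\phi_0\|_{L^2}\bigr)e^{C_0T_\si}+\si T_\si(\ln T_\si)^{3/2}.
\]
On the long window, apply Madelung $\u_\si=\sqrt{\rho_\si}e^{iS_\si}$ combined with the rescaling $y=x/\tau_\si(t)$; the density $\varrho_\si$ then obeys a perturbed porous medium equation with harmonic confinement, schematically $\partial_t\varrho_\si=\diver(y\varrho_\si)+\frac{1}{1+\si}\Delta\varrho_\si^{1+\si}+R_\si$, whose $\si\to 0$ limit is the harmonic Fokker--Planck equation $\partial_t\varrho_0=\diver(y\varrho_0+\nabla\varrho_0)$ with Gaussian equilibrium $\gamma^2$. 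A relative entropy/HWI argument against $\gamma^2$, uniform in $\si$ small, then produces $W_1(\varrho_\si(t),\gamma^2)+W_1(\varrho_0(t),\gamma^2)\lesssim 1/\sqrt{\ln t}$ for $t\ge T_\si$, so that the long-time piece is bounded by $C/\sqrt{\ln T_\si}$.

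The triangle inequality assembles the two windows into $W_1(\varrho_\si(t),\varrho_0(t))\lesssim(\si+\|\phi_\si-\phi_0\|_{L^2})e^{C_0T_\si}+\si T_\si(\ln T_\si)^{3/2}+1/\sqrt{\ln T_\si}$; choosing $T_\si=\theta C_0^{-1}\ln(1/\si)$ with $\theta<1$ makes the first two terms a small power of $\si$, leaving the third, of size $1/\sqrt{\ln\ln(1/\si)}$, as the dominant contribution, which yields \eqref{eq:convergence_rate_thm}. The qualitative \eqref{eq:convergence_thm} follows from the same scheme without a quantitative initial rate. The technical heart of the argument, corresponding to the paper's abstract mention of \emph{estimates for perturbed porous medium equations involving the harmonic Fokker--Planck operator}, is to propagate the HWI/entropy estimate uniformly in $\si\to 0$ in spite of the quantum-pressure remainder $R_\si$, which is singular on the vacuum $\{\rho_\si=0\}$; handling it requires regularization of the Madelung system, energy estimates in $\Sigma$ (which is precisely why $(\phi_\si)$ is assumed bounded in $\Sigma$), and a passage to the limit.
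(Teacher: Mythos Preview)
Your overall architecture is correct and matches the paper: split time at a threshold, use Theorem~\ref{theo:log-loc-temps} on the short window, compare both densities to the Gaussian on the long window via the triangle inequality, then balance. The ODE sketch and the final optimization are also in line with the paper (the paper works with the first integrals $\dot\tau_\si^2=\frac{1}{d\si}(1-\tau_\si^{-d\si})$, $\dot\tau_0^2=\ln\tau_0$ and a first-order Gr\"onwall for $\dot w$ rather than linearizing the second-order ODE, but this is cosmetic).

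The long-time part, however, has a genuine gap. Your schematic equation $\partial_t\varrho_\si=\diver(y\varrho_\si)+\frac{1}{1+\si}\Delta\varrho_\si^{1+\si}+R_\si$ is not what Madelung gives: in the original time one obtains a \emph{second-order} equation
\[
\partial_t(\tau_\si^2\partial_t\rho_\si)=\frac{1}{(\si+1)\tau_\si^{d\si}}\Delta\rho_\si^{\si+1}+\frac{2}{\tau_\si^{d\si}}\diver(y\rho_\si)-\frac{1}{4\tau_\si^2}\Delta^2\rho_\si+\frac{1}{\tau_\si^2}\diver\diver\nu_\si,
\]
with $\nu_\si=\RE(\nabla v_\si\otimes\nabla\bar v_\si)$. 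The paper's key technical step, missing from your plan, is a nonlinear time change $s_\si(t)=\frac{1}{2(1-d\si)}\ln\dot\tau_\si(t)$ that compactifies time to $[0,s_\si^{\max}]$ with $s_\si^{\max}\sim\frac14\ln\frac{1}{d\si}$ and reduces the equation to the first-order form $\partial_{s_\si}\tilde\rho_\si=\frac{1}{\si+1}\Delta\tilde\rho_\si^{\si+1}+2\diver(y\tilde\rho_\si)+\mathcal R$, where $\mathcal R$ still carries $\partial_{s_\si}^2\tilde\rho_\si$, the bi-Laplacian, and $\diver\diver\tilde\nu_\si$, now with explicit decaying prefactors. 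From there the paper does \emph{not} run an HWI/entropy argument: it writes Duhamel against the \emph{linear} Fokker--Planck semigroup $e^{sL}$, tests against a mollified and truncated Lipschitz function $\varphi_\epsilon$, and bounds each error term by trading space derivatives on $\varphi_\epsilon$ for exponential decay of $e^{sL}$, combined with the integrated kinetic bound $\int_0^\infty\frac{\dot\tau_\si}{\tau_\si^{3-d\si}}\|\nabla v_\si\|_{L^2}^2\,\dd t<\infty$ from Lemma~\ref{lem:apriori-unif-si-t}. This gives $W_1(\varrho_\si(t),\Gamma)\lesssim\si+1/\dot\tau_\si(t)$, while $W_1(\varrho_0(t),\Gamma)\lesssim1/\sqrt{\ln t}$ is imported from \cite{Ferriere2021}. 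Your proposed HWI route would have to absorb the unsigned $\Delta^2\rho_\si$ and $\diver\diver\nu_\si$ terms inside an entropy dissipation computation, and you offer no mechanism for this; the paper's duality approach sidesteps the issue entirely by never differentiating $\rho_\si$, only the test function.
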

We note that the convergence of $\varrho_\si$ toward $\varrho_0$ holds
uniformly in time, and so supersedes the range  on which $\tau_\si$
converges to $\tau_0$. Indeed, the asymptotic behaviors of $\tau_0$
and $\tau_\si$ become different for $\si\approx \frac{1}{\ln t}$ (and
then the control of $\tau_0-\tau_\si$ becomes of the same order as
$\tau_0$, indicating some sharpness in this error estimate).

\subsection{Content of the article}
\label{sec:content}

In Section~\ref{sec:prelim}, we introduce standard tools and results
related to \eqref{eq:NLS}. Based on those, Theorem~\ref{theo:local} is
proven in Section~\ref{sec:bounded}. The case of short range
nonlinearity $\si>\si_0(d)$ is addressed in
Section~\ref{sec:global-scatt}, containing the proof of
Theorem~\ref{theo:global} and Corollary~\ref{cor:scattering}. The
uniform in time convergence of rescaled densities, stated in
Theorem~\ref{theo:W1}, is given in Section~\ref{sec:global-gen}. The
rest of the paper deals with the convergence to the logarithmic
Schr\"odinger equation, from \eqref{eq:rescaledNLS} to
\eqref{eq:logNLS}. In Section~\ref{sec:ODE}, we focus our attention on
the ordinary differential equations for $\tau_0$ and $\tau_\si$,
proving the first conclusions of
Theorem~\ref{theo:log-temps-long}. In Section~\ref{sec:apriori}, we
prove a priori estimates for the solution $\u_\si$, with emphasis on
large time. The convergence toward the logarithmic Schr\"odinger
equation up to Ehrenfest time, Theorem~\ref{theo:log-loc-temps}, is proven
in Section~\ref{sec:ehrenfest}, where we also establish convergence in
Wasserstein distance $W_1$ on such time intervals, serving as a
preparatory step for the final Section~\ref{sec:log}, where the proof
of Theorem~\ref{theo:log-temps-long} is completed.

\section{Preliminary estimates}
\label{sec:prelim}

\subsection{Some technical tools}
\label{sec:tools}

We recall Gagliardo-Nirenberg inequalities, and standard inequalities
which may be viewed as a dual version of Gagliardo-Nirenberg
inequalities: for all
$v\in \Sch(\R^d)$, 
\begin{align}\label{eq:GN}
    &\|v\|_{L^p}\lesssim \|v\|_{L^2}^{1-\delta(p)}\|\nabla
    v\|_{L^2}^{\delta(p)},\quad \text{for}\quad 2\le p<
      \frac{2d}{(d-2)_+},\\
& \left\|v\right\|_{L^{p}} \lesssim\left\| v\right\|_{L^2}^{1-\delta(p')}
  \left\| y v\right\|_{L^2}^{\delta(p')},\quad \text{for}\quad
  \max\(1,\frac{2d}{d+2}\)< p\le 2, \label{eq:GNdual}
\end{align}
where
\begin{equation*}
  \delta(p):=d\(\frac{1}{2}-\frac{1}{p}\),\quad
  \frac{1}{p}+\frac{1}{p'}=1. 
\end{equation*}
We now recall classical Strichartz estimates:
\begin{definition}\label{def:adm}
A pair $(q,r)$ is {\bf admissible} if $2\le
 r\le\infty$ if $d=1$, $2\le r< 
  \infty$ if $d=2$, $2\le r\le \frac{2d}{d-2}$ if $d\ge 3$,
  and 
  \begin{equation*}
    \frac{2}{q}=\delta(r):= d\left( \frac{1}{2}-\frac{1}{r}\right).
  \end{equation*}
\end{definition}

\begin{lemma}[Strichartz
  estimates]\label{lem:strichartz}  
Denote $U_0(t)=e^{i\frac{t}{2} \Delta}$.\\
$(1)$ \emph{Homogeneous Strichartz estimate.} For any admissible pair
\index{admissible pair} $(q,r)$,
there exists $C_{q}$ such that
\begin{equation*}
  \|U_0(t) \varphi\|_{L^{q}(\R;L^{r})} \le C_q 
\|\varphi \|_{L^2},\quad \forall \varphi\in L^2(\R^d).
\end{equation*}
$(2)$ \emph{Inhomogeneous Strichartz estimate.}
For a time interval $I$, denote
\begin{equation*}
  D_I(F)(t,x) = \int_{I\cap\{\tau\le
      t\}} U_0(t-\tau)F(\tau,x)\dd \tau. 
\end{equation*}
For all admissible pairs \index{admissible pair}$(q_1,r_1)$ and~$
    (q_2,r_2)$, and any 
    interval $I$, there exists $C=C_{r_1,r_2}$ independent of 
    $I$ such that 
\begin{equation}\label{eq:strichnl}
      \left\lVert D_I(F)
      \right\rVert_{L^{q_1}(I;L^{r_1})}\le C \left\lVert
      F\right\rVert_{L^{q'_2}\(I;L^{r'_2}\)},
\end{equation}
for all $F\in L^{q'_2}(I;L^{r'_2})$.
\end{lemma}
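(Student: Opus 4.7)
The plan is to deduce both estimates from the dispersive bound for the free Schrödinger group combined with conservation of the $L^2$ norm, via a $TT^*$ duality argument, and to handle the endpoint case by the Keel--Tao method.

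First I would recall the explicit kernel representation $U_0(t)\varphi(x) = (2\pi i t)^{-d/2}\int_{\R^d} e^{i|x-y|^2/(2t)}\varphi(y)\,\dd y$, which yields the pointwise dispersive estimate $\|U_0(t)\varphi\|_{L^\infty(\R^d)} \lesssim |t|^{-d/2}\|\varphi\|_{L^1(\R^d)}$ for $t\neq 0$. Interpolating this with the isometry $\|U_0(t)\varphi\|_{L^2}=\|\varphi\|_{L^2}$ via Riesz--Thorin gives
\begin{equation*}
\|U_0(t)\varphi\|_{L^r(\R^d)}\lesssim |t|^{-\delta(r)}\|\varphi\|_{L^{r'}(\R^d)},\qquad 2\le r\le \infty,
\end{equation*}
which is the core decay input.

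Next I would run the classical $TT^*$ argument. Setting $T\varphi(t,\cdot)=U_0(t)\varphi$, one computes $T^*F=\int U_0(-\tau)F(\tau,\cdot)\,\dd\tau$ and $TT^*F(t,\cdot)=\int U_0(t-\tau)F(\tau,\cdot)\,\dd\tau$. For an admissible pair $(q,r)$ with $q>2$ the above decay combined with the Hardy--Littlewood--Sobolev inequality in the time variable yields
\begin{equation*}
\|TT^*F\|_{L^q_t L^r_x}\lesssim \|F\|_{L^{q'}_t L^{r'}_x}.
\end{equation*}
Duality then gives the homogeneous estimate (1) and, simultaneously, the dual Strichartz bound $\|T^*F\|_{L^2}\lesssim \|F\|_{L^{q'}_t L^{r'}_x}$. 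Composing a dual estimate for $(q_2,r_2)$ with a homogeneous estimate for $(q_1,r_1)$ gives the non-retarded version of (2) with the full time integral; to restrict it to $\{\tau\le t\}$ as in the definition of $D_I$, I would apply the Christ--Kiselev lemma, which is legitimate precisely because the admissibility condition excludes the diagonal $q_1'=q_1$ except at the endpoint. A uniform treatment of an arbitrary time interval $I$ follows by applying the estimates to $\mathbbm{1}_I F$, giving constants independent of $I$.

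The main obstacle is the endpoint case $q=2$, $r=2d/(d-2)$ for $d\ge 3$, where Hardy--Littlewood--Sobolev and Christ--Kiselev both fail. I would invoke the Keel--Tao argument: decompose the time variable dyadically into pieces $|t-\tau|\sim 2^j$, apply the dispersive estimate and bilinear interpolation between the trivial $L^2\times L^2$ bound and a $L^{r'}\times L^{r'}$ bound obtained from the decay, and sum the resulting geometric series. This yields both the homogeneous endpoint estimate and the retarded inhomogeneous one simultaneously, avoiding the need for Christ--Kiselev. Combining the subcritical range with this endpoint case exhausts all admissible pairs and completes the proof.
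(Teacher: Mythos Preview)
Your outline is correct and follows the standard route (dispersive estimate, $TT^*$ with Hardy--Littlewood--Sobolev, Christ--Kiselev for the retarded estimate, Keel--Tao at the endpoint). Note, however, that the paper does not actually prove this lemma: it is simply recalled as a classical result in Section~\ref{sec:tools}, so there is no ``paper's own proof'' to compare against.
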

For a time interval $I$, we introduce the norms
\begin{align*}
  &\|u\|_{X(I)} = \sup_{(q,r)\text{ admissible}}\|u\|_{L^q(I;L^r)},\\
  &\|u\|_{Y(I)} = \max_{A\in \{{\rm Id},\nabla\}}
  \sup_{(q,r)\text{ admissible}}\|A u\|_{L^q(I;L^r)},\\
 &\|u\|_{Z(I)} = \max_{A\in \{{\rm Id},\nabla,x+it\nabla\}}
  \sup_{(q,r)\text{ admissible}}\|Au\|_{L^q(I;L^r)}.
\end{align*}

\subsection{A priori estimates}

\label{sec:est-gen}
Suppose that $\phi_\si\in H^1(\R^d)$. 
Both for the power case \eqref{eq:NLS} and the logarithmic case
\eqref{eq:logNLS}, the mass is conserved: 
\begin{equation*}
M_\si  =\|u_\si(t)\|_{L^2(\R^d)}^2=\|\u_\si(t)\|_{L^2(\R^d)}^2=\|\phi_\si\|^2_{L^2(\R^d)},
\quad  \forall t\in \R,\ \forall \si\ge 0. 
\end{equation*}
Both equations are Hamiltonian, and the conserved energies read
\begin{align*}
  E_\si
  & = \frac{1}{2}\|\nabla u_\si(t)\|_{L^2(\R^d)}^2
    +\frac{1}{\si+1}\|u_\si(t)\|_{L^{2\si+2}(\R^d)}^{2\si+2},   \\
E_0 &=   \frac{1}{2}\|\nabla \u_0(t)\|_{L^2(\R^d)}^2+\int_{\R^d}
      |\u_0(t,x)|^2\ln |\u_0(t,x)|^2 \dd x. 
\end{align*}
In the case of \eqref{eq:rescaledNLS}, the second factor in $E_\si$
must be divided by $\si$ (recalling that the $L^2$-norm is
conserved); we study this case in more details in
Section~\ref{sec:apriori}.  
Suppose in addition that $\phi\in \Sigma$ (we leave out the possible
dependence upon $\si$ at this stage). 
For the power case \eqref{eq:NLS}, the pseudoconformal evolution law,
discovered in \cite{GV79Scatt}, reads:
\begin{equation}\label{eq:pseudo-conf}
  \frac{\dd}{\dd t}\(\frac{1}{2}\|(x+it\nabla) u\|_{L^2}^2
+ \frac{ t^2}{\si +1}\|u\|_{L^{2\si+2}}^{2\si+2}
\)=\frac{t}{\si+1}(2-d\si)\|u\|_{L^{2\si+2}}^{2\si+2} .
\end{equation}
For future references, we recall some important properties associated
with the operator involved in the pseudoconformal evolution law:
\begin{proposition}\label{prop:J}
  The operator
\[J(t)=x+it\nabla\]
satisfies the following properties:
  \begin{itemize}
  \item $J(t) =U_0(t)xU_0(-t)$, where $U_0$ denotes the Schr\"odinger
    group $U_0(t) =e^{i\frac{t}{2}\Delta}$. Therefore, $J$ commutes with the
 linear part of \eqref{eq:NLS},
\begin{equation*}\label{eq:Jcommute}
\left[ J(t),i\d_t +\frac{1}{2}\Delta\right]=0\, .
\end{equation*}
\item It can be factorized as
 \begin{equation*}
J(t)= i t \, e^{i\frac{|x|^2}{2t}}\nabla\Big( e^{-i\frac{|x|^2}{2t}}\,
\cdot\Big)\, .
\end{equation*}
As a consequence, $J$ yields weighted Gagliardo-Nirenberg
inequalities. For
$2\le r <\frac{2d}{(d-2)_+}$ ($2\le r\le \infty$ if $d=1$), there exists
$C(d, r)$ depending only on $d$ and $r$ such that
\begin{equation*}\label{eq:GNlibre}
\left\| f \right\|_{L^r}\le \frac{C(d, r)}{|t|^{\delta(r)}} \left\|
f \right\|_{L^2}^{1-\delta(r)} \left\|
J(t) f \right\|_{L^2}^{\delta(r)},\quad \delta(r):=d\(
\frac{1}{2}-\frac{1}{r}\)\, .
\end{equation*}
Also, if $F(z)=G(|z|^2)z$ is $C^1$, then $J(t)$
acts like a derivative on $F(w)$:
\begin{equation*}\label{eq:Jder}
J(t)\(F(w)\) = \d_z F(w)J(t)w -\d_{\overline z} F(w)\overline{ J(t)w
}\, .
\end{equation*}
\end{itemize}
\end{proposition}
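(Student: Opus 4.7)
The plan is to verify the three assertions by elementary computations, all of which are fairly standard in NLS theory.

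\textbf{For part (1)}, I would establish the conjugation identity $U_0(t) x_j U_0(-t) = x_j + it\partial_j$ via a Heisenberg-picture argument. Setting $A_j(t) := U_0(t) x_j U_0(-t)$, one has $A_j(0) = x_j$ and
\[
\tfrac{d}{dt} A_j(t) = U_0(t)\bigl[\tfrac{i}{2}\Delta,\, x_j\bigr] U_0(-t) = i\, U_0(t) \partial_j U_0(-t) = i\partial_j,
\]
since $\partial_j$ commutes with $U_0$. Integrating yields $A_j(t) = x_j + it\partial_j$, which is the $j$-th component of $J(t)$. The commutation with the free Schr\"odinger operator is then immediate: if $u$ solves that equation, then $J(t) u(t) = U_0(t)(x u(0))$ does as well.

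\textbf{For part (2)}, I would verify the factorization by a single application of the product rule:
\[
it\, e^{i|x|^2/(2t)} \nabla\bigl(e^{-i|x|^2/(2t)} f\bigr) = it\bigl(-i\tfrac{x}{t} f + \nabla f\bigr) = xf + it\nabla f.
\]
For the weighted Gagliardo-Nirenberg estimate, I set $g := e^{-i|x|^2/(2t)} f$, so that $|g| = |f|$ preserves all $L^r$ norms, while the factorization gives $\|\nabla g\|_{L^2} = |t|^{-1}\|J(t) f\|_{L^2}$. Applying the classical inequality \eqref{eq:GN} to $g$ then yields the claimed $|t|^{-\delta(r)}$ decay.

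\textbf{For part (3)}, I would argue via the Wirtinger chain rule together with an Euler-type identity specific to gauge-invariant nonlinearities. For $F(z) = G(|z|^2) z$, direct differentiation gives $\partial_z F = G + G'|z|^2$ and $\partial_{\bar z} F = G' z^2$, whence
\[
\partial_z F(z)\cdot z - \partial_{\bar z} F(z)\cdot \bar z = F(z).
\]
Writing $J(t) F(w) = x F(w) + it \nabla F(w)$, applying this Euler identity to the first term and the chain rule $\nabla F(w) = \partial_z F(w) \nabla w + \partial_{\bar z} F(w) \nabla \bar w$ to the second, and regrouping, one obtains
\[
J(t) F(w) = \partial_z F(w)\bigl(xw + it\nabla w\bigr) - \partial_{\bar z} F(w) \bigl(x\bar w - it \nabla \bar w\bigr) = \partial_z F(w)\, J(t) w - \partial_{\bar z} F(w)\, \overline{J(t) w}.
\]

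None of these steps presents a genuine obstacle; the only points requiring care are the sign conventions in the factorization and the correct handling of the Wirtinger derivatives under the gauge-invariant structure of $F$.
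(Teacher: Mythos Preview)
Your proof is correct. The paper does not actually prove this proposition: it is stated as a recollection of standard facts (``For future references, we recall some important properties\ldots'') and no argument is given. Your verifications follow the usual lines for these well-known identities and are all sound.
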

\smallbreak

If we assume $\si\ge 2/d$, the pseudoconformal evolution law
\eqref{eq:pseudo-conf} 
yields the a priori bound
\begin{equation*}
  \|(x+it\nabla) u_\sigma(t)\|_{L^2}\le C\( \|\phi\|_{\Sigma}\),
\end{equation*}
hence, by Proposition~\ref{prop:J}, 
\begin{equation*}
  \|u_\sigma(t)\|_{L^r}\le\frac{C}{\<t\>^{\delta(r)}}, \quad 2\le
    r<\frac{2}{(d-2)_+},\quad \delta (r) =
  d\(\frac{1}{2}-\frac{1}{r}\), 
\end{equation*}
for some constant $C$ depending only on $d$ and $\|\phi\|_{\Sigma}$. 
In particular,
\begin{equation}\label{eq:pot-decay}
  \|u_\si(t)\|_{L^{2\si+2}}\lesssim \frac{1}{\<t\>^{\frac{d\si}{2\si+2}}}.
\end{equation}
Integrating \eqref{eq:pseudo-conf} in time, and discarding the
(positive) term $\|J(t)u_\si\|_{L^2}^2$, 
Gr\"onwall lemma shows that for
\begin{equation*}
  \si >\frac{2-d+\sqrt{d^2+12d+4}}{4d}=\si_0(d),
\end{equation*}
 \eqref{eq:pot-decay} stills holds. The exponent $\si_0(d)$ is
 standard in the scattering theory in $\Sigma$ for
\eqref{eq:NLS} (see e.e.g \cite{CazCourant}). We note that $\si\ge
2/d>\si_0(d)$. 
\smallbreak

In the case $\si<2/d$, \eqref{eq:pseudo-conf} and Gr\"onwall lemma
yield
\begin{equation*}
  \|J(t)u\|_{L^2}^2 +t^2\|u(t)\|_{L^{2\si+2}}^{2\si+2}\lesssim \<t\>^{2-d\si}. 
\end{equation*}

\subsection{Existence results}
\label{sec:exist}

The following result is classical (see
e.g. \cite{CazCourant}):
\begin{proposition}\label{prop:cauchyNLS}
  Let $d\ge 1$, $0<\si<\frac{2}{(d-2)_+}$ and $\phi\in
  H^1(\R^d)$. Then \eqref{eq:NLS} has a unique solution
  \begin{equation*}
    u_\si \in \cont(\R;H^1(\R^d))\cap \cont^1(\R;H^{-1}(\R^d)).
  \end{equation*}
  For all bounded interval $I$, there exists
  $C=C(I,\|\phi\|_{H^1})$ such that 
  \begin{equation*}
    \|u_\si\|_{Y(I)} \le C,
  \end{equation*}
  and the mass $M$ and the energy $E_\si$ are independent of $t\in \R$. \\
  If in addition $\phi\in \Sigma$, then $u_\si \in \cont(\R;\Sigma)$ and
  for all bounded interval $I$, there exists
  $C=C(I,\|\phi\|_{\Sigma})$ such that 
  \begin{equation*}
    \|u_\si\|_{Z(I)} \le C,
  \end{equation*}
  and \eqref{eq:pseudo-conf} holds. 
 \end{proposition}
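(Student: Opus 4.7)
The plan is to run the classical Strichartz fixed-point argument on the Duhamel formulation
\begin{equation*}
  u(t) = U_0(t)\phi - i \int_0^t U_0(t-\tau) |u(\tau)|^{2\si} u(\tau)\dd\tau.
\end{equation*}
On a short interval $I\ni 0$ I look for a fixed point in a ball of $Y(I)$. The admissible pair $(q,r)$ with $r=2\si+2$ (admissible precisely because $\si<\frac{2}{(d-2)_+}$), combined with the inhomogeneous Strichartz estimate \eqref{eq:strichnl}, reduces the nonlinear contribution to $\bigl\||u|^{2\si}u\bigr\|_{L^{q'}(I;L^{r'})}$, which by H\"older in time and the Sobolev embedding $H^1\hookrightarrow L^{2\si+2}$ is small when $|I|$ is small. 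Applying $\nabla$ and using the pointwise bound $|\nabla(|u|^{2\si}u)|\lesssim |u|^{2\si}|\nabla u|$ closes the $Y(I)$ contraction and also gives uniqueness. Since $u\in \cont(I;H^1)$ and the equation yields $\partial_t u=\frac{i}{2}\Delta u - i|u|^{2\si}u \in \cont(I;H^{-1})$, one gets $u\in \cont^1(I;H^{-1})$.

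Conservation of mass and energy is first established at the $H^2$-level (for an approximating sequence $\phi_n\to \phi$ in $H^1$ with $\phi_n\in H^2$) by testing the equation against $\bar u$ and against $\overline{\partial_t u}$, then transferred to the $H^1$-level using the Lipschitz continuity of the flow in $H^1$ provided by the fixed point. In the defocusing case this gives a uniform a priori bound on $\|\nabla u(t)\|_{L^2}$, and, together with mass conservation, on $\|u(t)\|_{H^1}$; this precludes blow-up and yields global existence. The bound $\|u_\si\|_{Y(I)}\le C(I,\|\phi\|_{H^1})$ on a general bounded interval $I$ is then obtained by partitioning $I$ into finitely many subintervals whose length depends only on the (now uniformly controlled) $H^1$-norm of $u$, rerunning the local contraction on each, and summing.

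For $\phi\in \Sigma$, I apply the operator $J(t)=x+it\nabla$. By Proposition~\ref{prop:J}, $J$ commutes with $i\partial_t+\frac{1}{2}\Delta$ and acts like a first-order derivative on $|u|^{2\si}u$, so $Ju$ satisfies Strichartz estimates parallel to those for $\nabla u$, with initial datum $J(0)u=x\phi\in L^2$. This upgrades the Strichartz bound to $\|u_\si\|_{Z(I)}\le C(I,\|\phi\|_\Sigma)$ and, combined with the factorization of $J(t)$, gives $u_\si\in \cont(\R;\Sigma)$. The pseudoconformal identity \eqref{eq:pseudo-conf} is then obtained by differentiating in $t$ the quantity $\tfrac{1}{2}\|J(t)u\|_{L^2}^2+\tfrac{t^2}{\si+1}\|u\|_{L^{2\si+2}}^{2\si+2}$, inserting the equation, and using the derivation rule for $J$ acting on $|u|^{2\si}u$; the $L^{2\si+2}$-contributions combine into the explicit factor $2-d\si$ measuring the deviation from the $L^2$-critical scaling $\si=2/d$.

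The main obstacle in the whole argument is not conceptual but rather the rigorous justification of energy conservation and of the pseudoconformal identity at the $H^1$ and $\Sigma$ level, since at this regularity $u$ is only $\cont^1$ into $H^{-1}$ and the quantities involved are not directly differentiable. This is handled by the standard regularization scheme sketched above, combined with the continuous dependence of the flow on the initial data in the relevant topology.
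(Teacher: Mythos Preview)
The paper does not give its own proof of this proposition; it is stated as classical and attributed to \cite{CazCourant}. Your sketch is correct and is precisely the standard argument one finds there: local well-posedness by a Strichartz fixed point in $Y(I)$ with the admissible pair $(q,r)=(\tfrac{4\si+4}{d\si},2\si+2)$, conservation laws via $H^2$-approximation and continuous dependence, global existence in the defocusing case from the resulting a priori $H^1$-bound, and the upgrade to $\Sigma$ and the pseudoconformal law via the commuting vector field $J(t)$ of Proposition~\ref{prop:J}.
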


The next result follows from \cite{CaGa18} (case $\Sigma$)
and \cite{HO25} (case $H^1$) for \eqref{eq:logNLS}:
\begin{proposition}\label{prop:cauchylogNLS}
  Let $d\ge 1$. If $\phi\in H^1(\R^d)$, then \eqref{eq:logNLS} has a
  unique solution $\u_0 \in \cont(\R;H^1(\R^d))$. If in addition $\phi\in
  \Sigma$, then $\u_0\in \cont(\R;\Sigma)$, and the mass $M_0$ and the energy
  $E_0$ are independent of time. 
\end{proposition}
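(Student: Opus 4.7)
The plan is to follow the classical approach going back to \cite{CaHa80}, further refined in \cite{CaGa18,HO25}: regularize the singular nonlinearity $z\mapsto z\ln|z|^2$ at the origin, solve the regularized equation by standard semilinear theory, extract uniform a priori bounds from mass and energy conservation, and pass to the limit. Concretely, set $f_\eps(z)=z\ln(\eps+|z|^2)$ for $\eps>0$, which is smooth on $\C$, so that by a standard fixed point / energy argument the regularized problem
\begin{equation*}
i\d_t\u_\eps+\tfrac12\Delta\u_\eps = \u_\eps\ln(\eps+|\u_\eps|^2),\qquad \u_{\eps\mid t=0}=\phi,
\end{equation*}
admits a unique global solution $\u_\eps\in\cont(\R;H^1(\R^d))$, with mass and (regularized) energy conserved:
\begin{equation*}
\|\u_\eps(t)\|_{L^2}^2=\|\phi\|_{L^2}^2,\qquad E_\eps(t)=\tfrac12\|\nabla\u_\eps(t)\|_{L^2}^2+\int_{\R^d}F_\eps(|\u_\eps(t,x)|^2)\dd x,
\end{equation*}
where $F_\eps'(s)=\ln(\eps+s)$, hence $F_\eps(s)=(s+\eps)\ln(s+\eps)-s-\eps\ln\eps$.

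Next, I would derive $\eps$-uniform estimates. The key point is that $F_\eps(s)\ge 0$ outside a bounded neighborhood of $0$, while on $\{|u|\le 1\}$ one has $|F_\eps(|u|^2)|\lesssim |u|^{2-\delta}$ for any small $\delta\in(0,1)$. On the positive part this gives a bound on $\|\u_\eps\|_{L^{2+\delta}}$ which, combined with Gagliardo-Nirenberg \eqref{eq:GN}, yields $\|\nabla\u_\eps\|_{L^2}$ uniformly in $\eps$. The negative part is estimated through \eqref{eq:GNdual} in terms of $\|x\u_\eps\|_{L^2}$ (in the $\Sigma$ case) or via more refined techniques in the bare $H^1$ case, as in \cite{HO25}; in $\Sigma$ one further closes a virial-type inequality to bound $\|x\u_\eps(t)\|_{L^2}$ by at most a polynomial in $t$. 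Passage to the limit is then standard: weak-$*$ compactness in $L^\infty_{\rm loc}(\R;H^1)$, strong local compactness in $L^2$ via Aubin-Lions (using that $\d_t\u_\eps$ is uniformly bounded in $L^\infty_{\rm loc}(\R;H^{-1})$), almost-everywhere extraction, and dominated convergence (relying on $|f_\eps(z)|\lesssim 1+|z|^{1+\delta}$) produce a limit $\u_0\in\cont(\R;H^1)$ solving \eqref{eq:logNLS}; the $\Sigma$ regularity is preserved in the limit by the weak compactness in $L^\infty_{\rm loc}(\R;\Sigma)$.

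The main obstacle is uniqueness, because $z\mapsto z\ln|z|^2$ fails to be Lipschitz at the origin, so the naive Grönwall argument in $L^2$ does not apply directly. The resolution is the purely algebraic inequality (going back to \cite{CaHa80})
\begin{equation*}
\bigl|\IM\bigl((z_1\ln|z_1|^2-z_2\ln|z_2|^2)\overline{(z_1-z_2)}\bigr)\bigr|\le 4|z_1-z_2|^2,\qquad z_1,z_2\in\C,
\end{equation*}
which is elementary but crucial: applied to the difference $w=\u_0^{(1)}-\u_0^{(2)}$ of two solutions, tested against $\bar w$ and integrated on $\R^d$, taking imaginary parts kills the Laplacian contribution and leaves
\begin{equation*}
\tfrac{d}{dt}\|\u_0^{(1)}(t)-\u_0^{(2)}(t)\|_{L^2}^2\le 8\,\|\u_0^{(1)}(t)-\u_0^{(2)}(t)\|_{L^2}^2,
\end{equation*}
so Grönwall closes uniqueness in $L^2$ (hence in $H^1$ and in $\Sigma$). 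Finally, conservation of $M_0$ passes to the limit immediately, and conservation of $E_0$ in the $\Sigma$ case follows from the uniform $\Sigma$ bounds combined with weak lower semi-continuity and time-reversibility of \eqref{eq:logNLS} applied at both endpoints.
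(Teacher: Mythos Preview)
The paper does not give its own proof of this proposition: it simply records the result as a consequence of \cite{CaGa18} (for the $\Sigma$ case) and \cite{HO25} (for the $H^1$ case). Your outline is precisely a sketch of the strategy developed in those references---regularize the logarithm, use mass/energy conservation to obtain $\eps$-uniform bounds (with the dual Gagliardo--Nirenberg inequality \eqref{eq:GNdual} handling the negative part in $\Sigma$), pass to the limit by compactness, and close uniqueness via the Cazenave--Haraux inequality (stated as Lemma~\ref{lem:CH} in the paper, with constant $2$ rather than $4$)---so there is nothing to compare.

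A few points of your sketch are genuinely only sketched: the pure $H^1$ case (no moment assumption) is the delicate part, since the control of $\int_{\{|u|\le 1\}}|u|^2|\ln|u|^2|$ cannot use \eqref{eq:GNdual} and requires the specific arguments of \cite{HO25}, which you rightly defer to; and strong continuity in $H^1$ (as opposed to weak continuity) needs the standard upgrade via energy conservation, which you allude to only through time-reversibility. These are not errors, just places where ``as in \cite{HO25}'' or ``standard'' is doing real work.
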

For future reference, we gather a priori estimates
stemming from \eqref{eq:pseudo-conf} and the conservation of the
energy in the following lemma.
\begin{lemma}\label{lem:apriori-pseudoconf}
  Let $\phi\in \Sigma$ and $u\in \cont(\R;\Sigma)$ solve
  \eqref{eq:NLS}, with 
  $0<\si<\frac{2}{(d-2)_+}$.
  \begin{itemize}
  \item If $\si\ge 2/d$, then for all $t\in \R$,
    \begin{align*}
      & \|J(t)u\|_{L^2(\R^d)}\le \|x\phi\|_{L^2(\R^d)},\\
      & \|u(t)\|_{L^r(\R^d)}\le
        \frac{C}{|t|^{\delta(r)}}\|x\phi\|_{L^2(\R^d)},\quad 2\le
        r<\frac{2d}{(d-2)_+}, 
    \end{align*}
    where $\delta(r)$ is defined in Proposition~\ref{prop:J}.
  \item If $\si<2/d$, then for all $t\in \R$,
    \begin{align*}
      & \|J(t)u\|_{L^2(\R^d)}\lesssim \<t\>^{1-\frac{d\si}{2}} ,\\
      & \|u(t)\|_{L^r(\R^d)}\lesssim
        \<t\>^{-\frac{d\si}{2}\delta(r)},\quad 2\le
        r<\frac{2d}{(d-2)_+}.
    \end{align*}
  \end{itemize}
\end{lemma}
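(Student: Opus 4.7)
The plan is to read off both cases directly from the pseudoconformal evolution law \eqref{eq:pseudo-conf}, which already appeared in the discussion preceding the lemma; everything then follows by combining a suitable Grönwall argument with the weighted Gagliardo--Nirenberg inequality of Proposition~\ref{prop:J}. By time reversal symmetry of \eqref{eq:NLS} ($u(t,x)\mapsto \overline{u(-t,x)}$ is again a solution, with the same $\Sigma$-norms), it suffices to treat $t\ge 0$.

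First I would introduce the auxiliary quantity
\begin{equation*}
  f(t):=\tfrac12 \|J(t)u\|_{L^2}^2+\tfrac{t^2}{\si+1}\|u(t)\|_{L^{2\si+2}}^{2\si+2},
\end{equation*}
so that $f(0)=\tfrac12\|x\phi\|_{L^2}^2$, and \eqref{eq:pseudo-conf} reads $f'(t)=\tfrac{t(2-d\si)}{\si+1}\|u\|_{L^{2\si+2}}^{2\si+2}$. In the short range case $\si\ge 2/d$, one has $2-d\si\le 0$ so $f$ is nonincreasing on $[0,\infty)$, which immediately yields $\|J(t)u\|_{L^2}\le \|x\phi\|_{L^2}$. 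The $L^r$ estimate then follows from the weighted Gagliardo--Nirenberg inequality in Proposition~\ref{prop:J} combined with the conservation of mass, giving $\|u(t)\|_{L^r}\lesssim |t|^{-\delta(r)}\|\phi\|_{L^2}^{1-\delta(r)}\|x\phi\|_{L^2}^{\delta(r)}$.

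In the long range case $\si<2/d$, I would observe that, for $t>0$, the identity $f'(t)=\frac{2-d\si}{t}\cdot \frac{t^2}{\si+1}\|u\|_{L^{2\si+2}}^{2\si+2}$ yields $f'(t)\le \frac{2-d\si}{t}f(t)$, because the $L^{2\si+2}$ term is dominated by $f(t)$. Solving this differential inequality on $[1,\infty)$ gives $f(t)\le f(1)\,t^{2-d\si}$, and on $[0,1]$ the bound $f(t)\lesssim 1$ follows from Proposition~\ref{prop:cauchyNLS} (bounded $Z$-norm on bounded intervals). Together this reads $f(t)\lesssim \langle t\rangle^{2-d\si}$, which produces both $\|J(t)u\|_{L^2}\lesssim \langle t\rangle^{1-d\si/2}$ and (by inspecting the second term of $f$) $\|u(t)\|_{L^{2\si+2}}\lesssim \langle t\rangle^{-d\si/(2\si+2)}$. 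The general $L^r$ bound is then obtained by interpolating between $L^2$ and the $J$-estimate via Proposition~\ref{prop:J}:
\begin{equation*}
 \|u(t)\|_{L^r}\lesssim |t|^{-\delta(r)}\|u(t)\|_{L^2}^{1-\delta(r)}\|J(t)u\|_{L^2}^{\delta(r)}\lesssim |t|^{-\delta(r)}\langle t\rangle^{\delta(r)(1-d\si/2)}\lesssim \langle t\rangle^{-\frac{d\si}{2}\delta(r)},
\end{equation*}
with the $|t|$ singularity near $t=0$ absorbed by switching to $\langle t\rangle$ thanks to the a priori $\cont(\R;\Sigma)$ bound.

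I do not anticipate any serious obstacle: the two statements really are just bookkeeping for the two signs of $2-d\si$ in \eqref{eq:pseudo-conf}. The only minor point of care is handling $t$ near $0$ (where $|t|^{-\delta(r)}$ would be singular in the Gagliardo--Nirenberg application), which is settled by the local-in-time $\Sigma$-bound of Proposition~\ref{prop:cauchyNLS} and allows replacing $|t|$ by $\langle t\rangle$ in the final estimates.
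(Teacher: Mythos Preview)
Your proof is correct and follows exactly the approach sketched in the paper's Section~\ref{sec:est-gen}: read off the sign of $2-d\si$ in the pseudoconformal law \eqref{eq:pseudo-conf}, apply monotonicity or Gr\"onwall to control $\|J(t)u\|_{L^2}$, and then feed this into the weighted Gagliardo--Nirenberg inequality of Proposition~\ref{prop:J}. Your handling of the region $0\le t\le 1$ (splitting off a bounded interval before applying the differential inequality) is slightly more explicit than the paper, but the substance is identical.
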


\section{Bounded time intervals}
\label{sec:bounded}

In this section, we prove Theorem~\ref{theo:local}.
Since the equation \eqref{eq:NLS} is reversible, we consider positive
time only.
\subsection{$L^2$-estimate}
We fix $\si>0$, and consider $\nu\in \(0,\frac{2}{(d-2)_+}\)$ in a
neighborhood of $\si$. The difference $u_\nu-u_\si$ solves
\begin{equation}\label{eq:err-cont}
  i\d_t\(u_{\nu}- u_{\sigma}\) +\frac{1}{2}\Delta
  \(u_{\nu}- u_{\sigma}\)=
 |u_{\nu}|^{2\nu}u_{\nu} -
  |u_{\sigma}|^{2\nu}u_{\sigma}+
  |u_{\sigma}|^{2\nu}u_{\sigma}
  - |u_{\sigma}|^{2\sigma}u_{\sigma}.
\end{equation}
The pair
\begin{equation*}
  (q_\si,r_\si):=\(\frac{4\si+4}{d\si},2\si+2\)
\end{equation*}
is admissible, and (with the same implicit time interval in each term)
\begin{align*}
  \left\| |u_{\sigma}|^{2\nu}u_{\sigma}
  - |u_{\sigma}|^{2\sigma}u_{\sigma}\right\|_{L^{q_\si'}L^{r_\si'}}\le
  \left\| |u_{\sigma}|^{2\nu}
  - |u_{\sigma}|^{2\sigma}\right\|_{L^{\theta}L^{\frac{r_\si}{2\si}}}
   \left\| u_{\sigma}\right\|_{L^{q_\si}L^{r_\si}},
\end{align*}
where
\begin{equation*}
\frac{1}{r_\si'}=\frac{2\si+1}{r_\si}\quad ;\quad   \frac{1}{q_\si'} = \frac{1}{q_\si}+\frac{1}{\theta}\Longleftrightarrow
  \theta = \frac{2\si+2}{2-(d-2)\sigma}.
\end{equation*}
We have $1<\theta<\infty$ since we consider energy-subcritical
nonlinearities. Write $\nu=\si(1+h)$, with $|h|\ll 1$. Taylor formula
yields for $y\ge 0$ (a placeholder for $|u_\si|^{2\si}$),
\begin{equation*}
  y-y^{1+h}= -hy\ln y \int_0^1 y^{s h} \dd s,
\end{equation*}
and
\begin{equation*}
  \int_0^1 y^{s h} \dd s \le 1 + y^{-|h|}\1_{0<y\le
    1}+y^{|h|}\1_{y>1},
\end{equation*}
hence 
\begin{equation*}
  \left|y-y^{1+h}\right|\lesssim |h| y |\ln y| \(
  1+y^{-|h|}+y^{|h|}\)\lesssim |h| \(y^{1-\eta} + y^{1+\eta}\),
\end{equation*}
as soon as $|h|<\eta$. Denoting $\nu_\pm = \si(1\pm \eta)$, we have,
for  $0<\eta\ll 1$,
\begin{equation*}
  |u_\si|^{2\nu_\pm}\in L^\theta L^{\frac{r_\si}{2\si}},
\end{equation*}
since for $0<\eta\ll 1$, $r_\si \frac{\nu_\pm}{\si}=(1\pm \eta)r_\si
\in (2,2^*)$, where
\begin{equation*}
  2^* =\frac{2d}{(d-2)_+}. 
\end{equation*}
Indeed, $r_\si \in (2,2^*)$, and since this condition is open, it
remains true for nearby indices. Therefore,
\begin{equation*}
  \left\| |u_{\sigma}|^{2\nu}u_{\sigma}
  -
  |u_{\sigma}|^{2\sigma}u_{\sigma}\right\|_{L^{q_\si'}L^{r_\si'}}
\lesssim |\si-\nu| \|u_\si\|_{L^\infty H^1}^{2\si+1}. 
\end{equation*}
For the term $ |u_{\nu}|^{2\nu}u_{\nu} -
|u_{\sigma}|^{2\nu}u_{\sigma}$, we use the pair $(q_\nu,r_\nu)$, and
write
\begin{equation*}
  \left\| |u_{\nu}|^{2\nu}u_{\nu} -
|u_{\sigma}|^{2\nu}u_{\sigma}\right\|_{L^{q_\nu'}L^{r_\nu'}}\le \(
  \|u_\nu\|^{2\nu}_{L^kL^{r_\nu}} +  \|u_\si\|^{2\nu}_{L^kL^{r_\nu}}
  \) \|u_\nu-u_\sigma\|_{L^{q_\nu}L^{r_\nu}},
\end{equation*}
where
\begin{equation*}
  \frac{1}{q_\nu'} = \frac{1}{q_\nu}+\frac{2\nu}{k}\Longleftrightarrow
  k = \frac{2\nu(2\nu+2)}{2-(d-2)\nu}.
\end{equation*}
For a bounded time interval, the conservation of the energy implies
\begin{equation*}
  \| u_\sigma\|_{L^\infty H^1} + \| u_\nu\|_{L^\infty H^1} \le C\(
  \|\phi\|_{H^1}\), 
\end{equation*}
and so, thanks to Sobolev embedding,
\begin{equation*}
   \|u_\nu\|^{2\nu}_{L^k(I;L^{r_\nu})} +
   \|u_\si\|^{2\nu}_{L^k(I;L^{r_\nu})}\le |I|^{1/k} C \(
  \|\phi\|_{H^1}\).
\end{equation*}
Let $I_n=[t_n,t_{n+1}]$:
\begin{equation*}
L^{q_\nu}(I_n;L^{r_\nu})\cap L^{q_\si}(I_n;L^{r_\si})\cap
L^\infty(I_n;L^2)\subset X(I_n).
\end{equation*}
Strichartz estimates yield
\begin{align*}
  \|u_\nu-u_\sigma\|_{X(I_n)}
  & \lesssim \|u_\nu(t_n)-u_\sigma(t_n)\|_{L^2}
    + |I_n|^{1/k} \|u_\nu-u_\sigma\|_{X(I_n)}\\
&\quad+  \left\| |u_{\sigma}|^{2\nu}
  -
  |u_{\sigma}|^{2\sigma}\right\|_{L^{\theta}(I_n;L^{\frac{r_\si}{2\si}})}
\left\| u_{\sigma}\right\|_{L^{q_\si}L^{r_\si}}.
\end{align*}
We decompose $[0,T]$ into finitely many $I_n$ so we infer, up to
doubling the implicit constant,
\begin{equation*}
  \|u_\nu-u_\sigma\|_{X(I_n)}\lesssim \|u_\nu(t_n)-u_\sigma(t_n)\|_{L^2}
  + \left\| |u_{\sigma}|^{2\nu}
  -
  |u_{\sigma}|^{2\sigma}\right\|_{L^{\theta}(I_n;L^{\frac{r_\si}{2\si}})}
\left\| u_{\sigma}\right\|_{L^{q_\si}L^{r_\si}}.
\end{equation*}
We conclude that, as $\nu\to \si$, 
\begin{equation*}
  \|u_\nu-u_\sigma\|_{X([0,T])}=\O_T\(\|\phi_\nu-\phi_\si\|_{L^2}+|\nu-\si|\).
\end{equation*}

\subsection{Estimate in $\Sigma$}

The convergence in $H^1$ is interesting, as well as in $\F(H^1)$: the
latter essentially needs the former, as either we apply the
multiplication operator $x$ to \eqref{eq:err-cont}, and a source term
$\nabla (u_\nu-u_\si)$ appears due to the commutator $[x,\Delta]$, or
we apply $x+it\nabla$, which commutes with the left hand side of
\eqref{eq:err-cont}, but acts on the nonlinearity like the gradient. 

Let $A\in \{i\nabla,x+it\nabla\}$, and apply $A$ to
\eqref{eq:err-cont}:
\begin{equation*}
   i\d_tA\(u_{\nu}- u_{\sigma}\) +\frac{1}{2}\Delta
  A\(u_{\nu}- u_{\sigma}\)=  L_A(\nu,\si)+S_A(\nu,\si),
 \end{equation*}
where the term which is always locally Lipschitz when $A=\mathrm{Id}$ is 
\begin{equation*}
  L_A(\nu,\si) = A\( |u_{\nu}|^{2\nu}u_{\nu} -
  |u_{\sigma}|^{2\nu}u_{\sigma}\),
\end{equation*}
and the source term is 
\begin{equation*}
 S_A(\nu,\si)= A\( |u_{\sigma}|^{2\nu}u_{\sigma}
  - |u_{\sigma}|^{2\sigma}u_{\sigma}\).
\end{equation*}
We have
\begin{equation*}
  A\(|u|^{2\nu}u\) = (\nu+1)|u|^{2\nu}Au - \nu u^{\nu+1}\bar
  u^{\nu-1}\overline{Au},
\end{equation*}
and we write
\begin{align*}
 L_A(\nu,\si)&= (\nu+1)|u_\nu|^{2\nu}Au_\nu - \nu u_\nu^{\nu+1}\bar
  u_\nu^{\nu-1}\overline{Au_\nu} - (\nu+1)|u_\si|^{2\nu}Au_\si \\
  &\quad - \nu
               u_\si^{\nu+1}\bar   u_\si^{\nu-1}\overline{Au_\si} \pm \((\nu+1)|u_\nu|^{2\nu}Au_\si - \nu u_\nu^{\nu+1}\bar
  u_\nu^{\nu-1}\overline{Au_\si}\),
\end{align*}
so we have the pointwise estimate
\begin{align*}
  |L_A(\nu,\si)|&\lesssim |u_\nu|^{2\nu}
  |A(u_\nu-u_\si)|+\left| |u_\nu|^{2\nu}-|u_\si|^{2\nu}\right| |Au_\si|.
\end{align*}
Resuming the Lebesgue indices from the previous subsection,
\begin{align*}
  \|L_A(\nu,\si)\|_{L^{q_\nu'}L^{r_\nu'}}
&\lesssim 
  \|u_\nu\|^{2\nu}_{L^k L^{r_\nu}} \|A(u_\nu-u_\sigma)\|_{L^{q_\nu}L^{r_\nu}}\\
&\quad  + \left\| |u_\nu|^{2\nu} -
    |u_\si|^{2\nu}\right\|_{L^{\frac{k}{2\nu}} L^{\frac{r_\nu}{2\nu}}}  
\|A u_\sigma\|_{L^{q_\nu}L^{r_\nu}},
\end{align*}
where
\begin{equation*}
  \frac{1}{q_\nu'} = \frac{1}{q_\nu}+\frac{2\nu}{k}\Longleftrightarrow
  k = \frac{2\nu(2\nu+2)}{2-(d-2)\nu}.
\end{equation*}
The second term in the above inequality is actually considered as a
source term in view of the convergence established in the previous
subsection. 
Like in e.g. \cite{CFH11}, we distinguish two cases:
\begin{itemize}
\item If $2\nu\ge 1$, the application $z\mapsto |z|^{2\nu}$ is
  (locally) Lipschitz continuous, 
  \begin{equation*}
    \left||u_\nu|^{2\nu} -    |u_\si|^{2\nu}\right| \lesssim 
\( |u_\nu|^{2\nu-1} +    |u_\si|^{2\nu-1}\) |u_\nu-u_\si|,
\end{equation*}
hence, by H\"older inequality,
\begin{equation*}
  \left\| |u_\nu|^{2\nu} -
    |u_\si|^{2\nu}\right\|_{L^{\frac{k}{2\nu}} L^{\frac{r_\nu}{2\nu}}}
  \lesssim
  \( \|u_\nu\|_{L^kL^{r_\nu}}^{2\nu-1} +
  \|u_\si\|_{L^kL^{r_\nu}}^{2\nu-1}\) \|u_\nu-u_\si\|_{L^kL^{r_\nu}}.
\end{equation*}
\item If $0<2\nu<1$, the application $z\mapsto |z|^{2\nu}$ is H\"older
  continuous,
  \begin{equation*}
    \left||u_\nu|^{2\nu} -    |u_\si|^{2\nu}\right| \lesssim 
|u_\nu-u_\si|^{2\nu},
\end{equation*}
and
\begin{equation*}
  \left\| |u_\nu|^{2\nu} -
    |u_\si|^{2\nu}\right\|_{L^{\frac{k}{2\nu}} L^{\frac{r_\nu}{2\nu}}}
  \lesssim \|u_\nu-u_\si\|^{2\nu}_{L^k L^{r_\nu}} .
\end{equation*}
\end{itemize}
In view of Proposition~\ref{prop:cauchyNLS}, we infer
\begin{equation*}
   \left\| |u_\nu|^{2\nu} -
    |u_\si|^{2\nu}\right\|_{L^{\frac{k}{2\nu}} L^{\frac{r_\nu}{2\nu}}}
  \lesssim \|u_\nu-u_\si\|^{\min(1,2\nu)}_{L^k L^{r_\nu}}.
\end{equation*}
In view of Sobolev embedding, when $\nu<\frac{2d}{(d-2)_+}$,
\begin{equation*}
  H^{s_\nu}(\R^d)\hookrightarrow L^{2\nu+2}(\R^d),\quad s_\nu = \frac{d\nu}{2\nu+2}<1,
\end{equation*}
for $\nu\in ((1-\eta)\si,(1+\eta)\si)$ and $\eta>0$ sufficiently
small. Therefore, in view of the previous subsection,
\begin{align*}
 \|u_\nu-u_\si\|_{L^\infty L^{r_\nu}}
 &\lesssim
     \|u_\nu-u_\si\|_{L^\infty H^{s_\nu}} =  \|u_\nu-u_\si\|_{L^\infty
   L^{2}}+
\|u_\nu-u_\si\|_{L^\infty \dot H^{s_\nu}}\\
 &\lesssim |\nu-\si | + \|u_\nu-u_\si\|_{L^\infty \dot H^{s_\nu}},
\end{align*}
where $\dot H^s$ denotes the homogeneous Sobolev space. 
By interpolation,
\begin{equation*}
  \|f\|_{\dot H^{s_\nu}} \le \|f\|_{L^2}^{1-s_\nu}\|f\|_{\dot H^1}^{s_\nu},
\end{equation*}
and from the previous subsection,
\begin{equation*}
  \|u_\nu-u_\si\|_{L^\infty L^{r_\nu}}\lesssim  |\nu-\si | + 
|\nu-\si |^{1-s_\nu}\|u_\nu-u_\si\|^{s_\nu}_{L^\infty \dot H^{1}}.
\end{equation*}
Therefore, setting $\gamma :=\min(1,2\nu)$,  for any $I\subset[0,T]$,
\begin{equation*}
   \left\| |u_\nu|^{2\nu} -
    |u_\si|^{2\nu}\right\|_{L^{\frac{k}{2\nu}}(I; L^{\frac{r_\nu}{2\nu}})}
  \lesssim   |\nu-\si|^{\gamma} +
  |I|^{\gamma/k}|\nu-\si |^{\gamma(1-s_\nu)}\|u_\nu-u_\si\|^{\gamma
    s_\nu} _{L^\infty \dot H^{1}}. 
\end{equation*}
We also have
\begin{equation*}
   \|u_\nu\|^{2\nu}_{L^k (I;L^{r_\nu})}
   \|A(u_\nu-u_\sigma)\|_{L^{q_\nu}(I;L^{r_\nu})}\le
   |I|^{1/k} \|u_\nu\|_{L^\infty H^1}^{2\nu} \|A(u_\nu-u_\sigma)\|_{X(I)},
 \end{equation*}
 and so
 \begin{align*}
  \|L_A(\nu,\si)\|_{L^{q_\nu'}(I;L^{r_\nu'})}&\lesssim
  |\nu-\si|^{\gamma} + |I|^{1/k}
  \|A(u_\nu-u_\sigma)\|_{X(I)} \\ 
  	&\quad +|I|^{\gamma/k} |\nu - \si|^{\gamma (1 - s_\nu)} \|u_\nu-u_\si\|^{\gamma s_\nu}_{L^\infty \dot
    H^{1}} .   
 \end{align*}
On the other hand, we write
\begin{align*}
  S_A(\nu,\si)& = (\nu+1)|u_\si|^{2\nu}Au_\si - \nu u_\si^{\nu+1}\bar
                u_\si^{\nu-1}\overline{Au_\si} -(\si+1)|u_\si|^{2\si}Au_\si \\
  &\quad +
                \si u_\si^{\si+1}\bar  u_\si^{\si-1}\overline{Au_\si}
  \pm \((\nu+1)|u_\si|^{2\si}Au_\si - \nu u_\si^{\si+1}\bar
  u_\si^{\si-1}\overline{Au_\si}\),
\end{align*}
hence
\begin{align*}
  |S_A(\nu,\si)|\lesssim \left| |u_\si|^{2\nu} - |u_\si|^{2\si}\right|
  |Au_\si|  +|\nu-\si| |u_\si|^{2\si} |Au_\si|.
\end{align*}
Resuming the notation $I_n=[t_n,t_{n+1}]$, Strichartz estimates yield
\begin{align*}
  \|A(u_\nu-u_\si)\|_{X(I_n)}
  &\lesssim \|A(u_\nu-u_\si)(t_n)\|_{L^2} +
    \|L_A(\nu,\si)\|_{L^{q_\nu'}(I_n;L^{r_\nu'})}\\
&\quad+
  \|S_A(\nu,\si)\|_{L^{q_\si'}(I_n;L^{r_\si'})}\\
  &\lesssim \|A(u_\nu-u_\si)(t_n)\|_{L^2} +
    |\nu-\si|^{\gamma} + |I_n|^{1/k} \|A(u_\nu-u_\sigma)\|_{X(I_n)}\\
&\quad + |I_n|^{\gamma/k} |\nu-\si |^{\gamma(1-s_\nu)}\|u_\nu-u_\si\|^{\gamma
    s_\nu} _{L^\infty \dot H^{1}}\\
  &\quad + \left\| |u_\si|^{2\nu} -
    |u_\si|^{2\si}\right\|_{L^\theta(I_n;L^{\frac{r_\si}{2\si}} )}
    \|Au_\si\|_{L^{q_\si}(I_n;L^{r_\si})} \\
  &\quad +|\nu-\si|
    \|u_\si\|_{L^{2\si \theta}(I_n;L^{r_\si} )}^{2\si}
    \|Au_\si\|_{L^{q_\si}(I_n;L^{r_\si})}  \\
   &\lesssim \|A(u_\nu-u_\si)(t_n)\|_{L^2} +
    |\nu-\si|^{\gamma} + |I_n|^{1/k} \|A(u_\nu-u_\sigma)\|_{X(I_n)}\\
&\quad + |I_n|^{\gamma/k} |\nu-\si |^{\gamma(1-s_\nu)}\|u_\nu-u_\si\|^{\gamma
    s_\nu} _{L^\infty \dot H^{1}}\\
  &\quad + \left\| |u_\si|^{2\nu} -
    |u_\si|^{2\si}\right\|_{L^\theta(I_n;L^{\frac{r_\si}{2\si}} )}
    \|Au_\si\|_{X([0,T])} \\
  &\quad +|\nu-\si|
    \|u_\si\|_{L^\infty H^1}^{2\si}
    \|Au_\si\|_{X([0,T])} .
\end{align*}
We note that $\gamma s_\nu<1$, and Young
inequality implies
\begin{equation*}
  |\nu-\si |^{\gamma(1-s_\nu)} \|u_\nu-u_\si\|^{\gamma
    s_\nu} _{L^\infty \dot H^{1}}\lesssim  |\nu-\si
  |^{\gamma(1-s_\nu)/(1-\gamma s_\nu)} + \|u_\nu-u_\si\|_{L^\infty \dot H^{1}}.
\end{equation*}
Since $\gamma\le 1$, 
\begin{equation*}
  \frac{1-s_\nu}{1-\gamma s_\nu} \le 1,
\end{equation*}
for $|\nu-\si|\le 1$,
\begin{equation*}
|\nu-\si |\le  |\nu-\si |^{\gamma}\le |\nu-\si
  |^{\gamma(1-s_\nu)/(1-\gamma s_\nu)} .
\end{equation*}
The term $\left\| |u_\si|^{2\nu} -
    |u_\si|^{2\si}\right\|_{L^\theta(I_n;L^{\frac{r_\si}{2\si}} )}$
  was estimated in the previous subsection,
  \begin{equation*}
    \left\| |u_\si|^{2\nu} -
    |u_\si|^{2\si}\right\|_{L^\theta(I_n;L^{\frac{r_\si}{2\si}}
    )}\lesssim |\nu-\si|,
\end{equation*}
and we have
\begin{align*}
  \|A(u_\nu-u_\si)\|_{X(I_n)}
 &\lesssim \|A(u-u_\si)(t_n)\|_{L^2} +
    |\nu-\si|^{\gamma(1-s_\nu)/(1-\gamma s_\nu)} \\
&\quad + |I_n|^{1/k}
   \|A(u_\nu-u_\sigma)\|_{X(I_n)}
 + |I_n|^{\gamma/k} \|u_\nu-u_\si\|_{L^\infty (I_n;\dot H^{1})} \\
&\lesssim \|A(u-u_\si)(t_n)\|_{L^2} +
    |\nu-\si|^{\gamma(1-s_\nu)/(1-\gamma s_\nu)} \\
&\quad + |I_n|^{1/k}
   \|A(u_\nu-u_\sigma)\|_{X(I_n)}
 + |I_n|^{\gamma/k} \|\nabla(u_\nu-u_\si)\|_{X(I_n)}. 
\end{align*}
We first let $A=\nabla$,
and decompose $[0,T]$ into finitely many $I_n$, to get
\begin{equation*}
   \|\nabla (u_\nu-u_\si)\|_{X([0,T])} 
=\O_T\(\|\nabla (\phi_\nu-\phi_\si)\|_{L^2}+
|\nu-\si|^{\gamma(1-s_\nu)/(1-\gamma s_\nu)}\) .
\end{equation*}
Considering then $A= x+it\nabla=J(t)$, we infer
\begin{equation*}
  \|J (u_\nu-u_\si) \|_{X([0,T])}
=\O_T\(\|J (\phi_\nu-\phi_\si)\|_{L^2}+|\nu-\si|^{\gamma(1-s_\nu)/(1-\gamma s_\nu)}\).
 \end{equation*}
 In view of the expressions of $s_\nu$ and $\gamma$,
 \begin{equation*}
   \gamma\frac{1-s_\nu}{1-\gamma s_\nu}=
   \begin{cases}
     1& \text{ if }2\nu\ge 1,\\
\frac{2-(d-2)\nu}{\nu+1-d\nu^2}\nu &\text{ if }2\nu<1,
   \end{cases}
 \end{equation*}
   and

\begin{equation} \label{eq:exp_theta_1}
   \gamma\frac{1-s_\nu}{1-\gamma s_\nu}=\min\( 1,
   \frac{2-(d-2)\nu}{\nu+1-d\nu^2}\nu \).
\end{equation}
The final rate is 
\begin{equation} \label{eq:exp_theta_2}
 \theta= \inf_{|\nu-\si|\le \eta}  \gamma\frac{1-s_\nu}{1-\gamma
    s_\nu}.
\end{equation}

\section{Global dynamics in the (very) short range case}
\label{sec:global-scatt}

\subsection{From local in time to global in time}

To make the estimate from Section~\ref{sec:bounded} global in time, we
essentially proceed the same way, except that to estimate the
``nonlinear potential'' terms,
$\|u_\nu\|_{L^k(I;L^{r_\nu})}$ and  $\|u_\si\|_{L^k(I;L^{r_\nu})}$, we
want to let $I=[T,\infty)$ for $T$ sufficiently large, by using an
explicit time decay, provided in Proposition~\ref{prop:J},
\begin{equation}\label{eq:integrability-short}
  \|u_\nu(t)\|_{L^{r_\nu}} \lesssim \frac{1}{|t|^{\delta}}
  \|u_\nu(t)\|_{L^2}^{1-\delta}\|J(t)u_\nu\|_{L^2}^\delta,\quad \delta
  = \frac{d\nu}{2\nu+2}. 
\end{equation}
Resuming the same notation as in the previous section, we note the equivalence
\begin{equation*}
 k \times \delta =
  \frac{2d\nu^2}{2-(d-2)\nu}>1\Longleftrightarrow
  \nu>\frac{2-d+\sqrt{d^2+12d+4}}{4d}=\si_0(d).
\end{equation*}
We note that $\si>\si_0(d)$, so the condition is fulfilled when 
$\nu=\si$. As the condition is 
open, for $\eta>0$
sufficiently small,  the above inequality is satisfied for
$|\nu-\si|\le \eta$.
\smallbreak

For the $L^2$-estimate, the previous computations yield, for $I=(t_i,t_f)\subset \R$,
\begin{align*}
  \|u_\nu-u_\si\|_{X(I)}
&\lesssim \|u_\nu(t_i)-u_\si(t_i)\|_{L^2} \\
&\quad +  \left\| |u_{\sigma}|^{2\nu}
  - |u_{\sigma}|^{2\sigma}\right\|_{L^{\theta}(I;L^{\frac{r_\si}{2\si}})}
   \left\| u_{\sigma}\right\|_{L^{q_\si}(I;L^{r_\si})}\\
&\quad+\(\|u_\nu\|^{2\nu}_{L^k(I;L^{r_\nu})} +  \|u_\si\|^{2\nu}_{L^k(I;L^{r_\nu})}
  \) \|u_\nu-u_\sigma\|_{L^{q_\nu}(I;L^{r_\nu})}\\
&\lesssim \|u_\nu(t_i)-u_\si(t_i)\|_{L^2} 
\\
 +  |\nu-\si| &
\(
  \left\||u_{\sigma}|^{2\si(1-\eta)}\right\|_{L^\theta(I;L^{\frac{r_\si}{2\si}})} 
+  \left\||u_{\sigma}|^{2\si(1+\eta)}\right\|_{L^\theta(I;L^{\frac{r_\si}{2\si}})} 
\) \left\| u_{\sigma}\right\|_{L^{q_\si}(I;L^{r_\si})}\\
+&\(\|u_\nu\|^{2\nu}_{L^k(I;L^{r_\nu})} +  \|u_\si\|^{2\nu}_{L^k(I;L^{r_\nu})}
  \) \|u_\nu-u_\sigma\|_{L^{q_\nu}L^{r_\nu}}\\
&\lesssim \|u_\nu(t_i)-u_\si(t_i)\|_{L^2} 
\\
 +  |\nu-\si| &
\(
  \left\||u_{\sigma}|^{1-\eta}\right\|_{L^{2\si \theta}(I;L^{r_\si})} 
+  \left\||u_{\sigma}|^{1+\eta}\right\|_{L^{2\si\theta}(I;L^{r_\si})} 
\) \left\| u_{\sigma}\right\|_{L^{q_\si}(I;L^{r_\si})}\\
+&\(\|u_\nu\|^{2\nu}_{L^k(I;L^{r_\nu})} +  \|u_\si\|^{2\nu}_{L^k(I;L^{r_\nu})}
  \) \|u_\nu-u_\sigma\|_{L^{q_\nu}L^{r_\nu}}.
\end{align*}
In view of the above preliminary, for $\eta\in (0,1)$ sufficiently
small,
\begin{equation*}
  |u_{\sigma}|^{1-\eta}, |u_{\sigma}|^{1+\eta}\in
  L^{2\si\theta}(\R;L^{r_\si}),\quad u_\nu, u_\si\in
  L^k(\R;L^{r_\nu}),\quad u_\si\in L^{q_\si}(\R;L^{r_\si}), 
\end{equation*}
so we can decompose $\R$ into finitely many intervals and repeat the
previous argument. 
The approach also makes it possible to readily adapt the proof  of the
convergence in $\Sigma$, as the ``nonlinear potential'' terms that
need to be absorbed by the left hand side are the same.

Theorem~\ref{theo:global} follows, by noticing the standard identity
\begin{equation*}
  \|x e^{-i\frac{t}{2}\Delta} w\|_{L^2} = \|J(t)w\|_{L^2},
\end{equation*}
due to the identity $J(t) = U_0(t)xU_0(-t)$ and the fact that $U_0(t)$ is
unitary on $L^2$. 

\subsection{Continuity of the scattering operator}

Corollary~\ref{cor:scattering} is a rather direct consequence of the
previous estimates. We first note that since $u_{+,\si}=\lim
e^{-i\frac{t}{2}\Delta}u(t)$ as $t\to +\infty$,
Theorem~\ref{theo:global} implies that if
$\|\phi_{\si}-\phi_{\nu}\|_\Sigma\to 0$ as $\nu\to \si$, then
$\|u_{+,\si}-u_{+,\nu}\|_\Sigma\to 0$. So all we have to explain is
why if $ \|u_{-,\si}-u_{-,\nu}\|_\Sigma\to 0$, then $\|u_{\si\mid
  t=0}-u_{\nu\mid t=0}\|_\Sigma\to 0$. We recall that wave operators
are constructed via a fixed point argument on Duhamel's formula, like
for the initial value problem,
\begin{equation*}
  u_\si(t) = e^{i\frac{t}{2}\Delta} u_{-,\si}-i\int_{-\infty}^t
  e^{i\frac{t-s}{2}\Delta }\( |u_\si|^{2\si}u_\si\)(s)\dd s. 
\end{equation*}
The convergence of the integral near $t=-\infty$ is granted by the
same estimate as before, \eqref{eq:integrability-short}. Therefore, no
real modification is needed in order to prove the continuity of
$S_\si$ in $\Sigma$ at $\si>\si_0(d)$.

\section{Global dynamics for general powers}
\label{sec:global-gen}

\subsection{Convergence in Wasserstein distance: proof of
  Theorem~\ref{theo:W1}}

Again, as the equation
\eqref{eq:NLS} is reversible, we focus on positive time only.
The general principle is the following.  It
follows from 
Theorem~\ref{theo:local} that for any given $T>0$,
\begin{equation*}
\limsup_{\nu\to \si}  \sup_{t\in [0, T]}\|\varrho_\nu (t) -
\varrho_\si(t)\|_{L^1}\lesssim \limsup_{\nu\to \si}
\|\phi_\nu-\phi_\si\|_{L^2}.
\end{equation*}
Details are given below. For $T$ sufficiently large, the variations of
$\varrho_\nu$ are negligible when $t\ge T$. If we worked in a normed
space $X$, we would write, for $t>T$,
\begin{align*}
  \|\varrho_\nu(t)-\varrho_\si(t)\|_X
  & =  \left\|\varrho_\nu(T)-\varrho_\si(T)+\int_T^t
    \d_t\( \varrho_\nu(s)-\varrho_\si(s)\)\dd s\right\|_X\\
  &\le \|\varrho_\nu(T)-\varrho_\si(T)\|_X + \int_T^t \|\d_t
    \varrho_\nu(s)\|_X\dd s+ \int_T^t \|\d_t \varrho_\si(s)\|_X\dd s.
\end{align*}
If $X$ is such that we can invoke Theorem~\ref{theo:local}, 
\begin{equation*}
  \limsup_{\nu\to \si}  \sup_{t\in\R}\|\varrho_\nu(t)-\varrho_\si(t)\|_X\le \sup_\nu
  \int_T^\infty \|\d_t 
    \varrho_\nu(s)\|_X\dd s+ \int_T^\infty\|\d_t \varrho_\si(s)\|_X\dd s, 
  \end{equation*}
and it suffices to show that the right hand side is an  $o(1)$ as
$T\to \infty$. This in turn follows from the continuity equation
\begin{equation*}
  \d_t \varrho_\si +\frac{1}{\<t\>^2}\diver j_\si=0,\quad\text{where
  }j_\si(t,y) = \frac{1}{\|\phi_\si\|_{L^2}^2 }\IM \(\bar v_\si\nabla
  v_\si\) , 
\end{equation*}
where $v_\si=v_\si(t,y)$ is defined by the  transform
\begin{equation*}
  u_\si(t,x) =
  \frac{1}{\<t\>^{d/2}}v_\si\(t,\frac{x}{\<t\>}\)\exp\(i\frac{t}{1+t^2}
    \frac{|x|^2}{2}\), 
\end{equation*}
and solves
\begin{equation*}
  i\d_t v_\si +\frac{1}{2\<t\>^2}\Delta v_\si  =
  \frac{|y|^2}{2\<t\>^{2}}v_\si+ \frac{1}{\<t\>^{d\si}}|v_\si|^{2\si}v_\si.
\end{equation*}
We obviously have $ \|v_\si(t)\|_{L^2}=\|\phi_\si\|_{L^2}$.  In view
of Lemma~\ref{lem:apriori-pseudoconf}, we also have the a priori 
estimates
\begin{align}
  \label{eq:v-apriori-pseudo1}
  &\|yv_\si(t)\|_{L^2} =
   \frac{1}{\<t\>}\|xu_\si(t)\|_{L^2} \le
   \frac{1}{\<t\>}\|J(t)u_\si\|_{L^2}+\|\nabla
   u_\si(t)\|_{L^2}\le  C\(\|\phi_\si\|_\Sigma\)  ,\\
 \label{eq:v-apriori-pseudo2} &\|\nabla
  v_\si(t)\|_{L^2}= \|(x+i\<t\>\nabla)u_\si\|_{L^2}\le
    \<t\>^{\max(0,1-d\si/2)}C\(\|\phi_\si\|_\Sigma\). 
\end{align}
First, we show that Theorem~\ref{theo:local} implies that for any
$T>0$,
\begin{equation*}
  \limsup_{\nu\to\si}\sup_{t\in [0,T]}W_1\(\varrho_\nu(t),\varrho_\si(t)\) =0.
\end{equation*}
In view of \eqref{eq:v-apriori-pseudo1}, for
$\eta>0$ sufficiently small, $0<(1-\eta)\si<(1+\eta)\si<\frac{2}{(d-2)_+}$, and there exists $M_2$ such that
\begin{equation*}
  \sup_{|\nu-\si|\le \eta}\sup_{t\in
    \R}\int_{\R^d}|y|^2\varrho_\nu(t,y)\dd y\le M_2. 
\end{equation*}
In the integral from \eqref{eq:W1} for the characterization of the Wasserstein
distance $W_1$, since $\mu_1$ and $\mu_2$ have the same mass, we may
replace $\psi$ 
with $\psi-\psi(0)$, and thus suppose that
$\psi(0)=0$. We have in particular
\begin{equation*}
  |\psi(x)|\le |x|,\quad \forall x\in \R^d. 
\end{equation*}
We thus write, for $t\ge 0$,
\begin{align*}
  W_1\(\varrho_\nu(t),\varrho_\si(t)\)
  &=\sup_{\|\psi\|_{\rm Lip}\le 1}\int_{\R^d}
    \psi(y)\(\varrho_\nu(t,y)-\varrho_\si(t,y)\)\dd y\\
 & \le \int_{\R^d} |y|\left\lvert
   \varrho_\nu(t,y)-\varrho_\si(t,y)\right\rvert \dd y\\
&\le R \int_{|y|<R}  \left\lvert
  \varrho_\nu(t,y)-\varrho_\si(t,y)\right\rvert \dd y \\
  &\quad + \frac{1}{R}
  \int_{|y|\ge R} |y|^2 \left\lvert 
    \varrho_\nu(t,y)-\varrho_\si(t,y)\right\rvert \dd y  \\
 &\le R\|\varrho_\nu(t)-\varrho_\si(t)\|_{L^1}+2\frac{M_2}{R}.
\end{align*}
Optimizing in $R$ by equating the last two terms, we infer
\begin{equation*}
  W_1\(\varrho_\nu(t),\varrho_\si(t)\)\le 2\sqrt{2M_2
    \|\varrho_\nu(t)-\varrho_\si(t)\|_{L^1}}. 
\end{equation*}
Now we show that Theorem~\ref{theo:local} and the assumptions that
$\phi_\nu$ converges to $\phi_\si$ in $L^2$ imply that for any $T>0$,
\begin{equation*}
  \limsup_{\nu\to \si}\sup_{t\in
    [0,T]}\|\varrho_\nu(t)-\varrho_\si(t)\|_{L^1}=0. 
\end{equation*}
Indeed,
\begin{align*}
  \|\varrho_\nu(t)-\varrho_\si(t)\|_{L^1}
  & = \left\| \<t\>^d \frac{|u_\nu(t,y\<t\>)|^2}{\|\phi_\nu\|_{L^2}^{2}}
    -\<t\>^d
    \frac{|u_\si(t,y\<t\>)|^2}{\|\phi_\si\|_{L^2}^{2}}\right\|_{L^1}\\ 
&  = \left\|  \frac{|u_\nu(t)|^2}{\|\phi_\nu\|_{L^2}^{2}}
    -\frac{|u_\si(t)|^2}{\|\phi_\si\|_{L^2}^{2}}\pm
   \frac{|u_\nu(t)|^2}{\|\phi_\si\|_{L^2}^{2}}\right\|_{L^1} \\
  &\le \frac{1}{\|\phi_\si\|_{L^2}^{2}}\left\|  |u_\nu(t)|^2-
    |u_\si(t)|^2\right\|_{L^1} + \|\phi_\nu\|_{L^2}\left|
    \frac{1}{\|\phi_\nu\|_{L^2}^{2}}-
    \frac{1}{\|\phi_\si\|_{L^2}^{2}}\right|\\
  & \le
    \frac{1}{\|\phi_\si\|_{L^2}^{2}}\(\|\phi_\nu\|_{L^2}+\|\phi_\si\|_{L^2}\)
    \left\|  u_\nu(t)-
    u_\si(t)\right\|_{L^2} + o_{\nu\to \si}(1),
\end{align*}
hence the claimed convergence.

Now for $t>T$, we write
\begin{align*}
    W_1\(\varrho_\nu(t),\varrho_\nu(T)\)
  &=\sup_{\|\psi\|_{\rm Lip}\le 1}\int_{\R^d}
    \psi(y)\(\varrho_\nu(t,y)-\varrho_\nu(T,y)\)\dd y\\
&    =\sup_{\|\psi\|_{\rm Lip}\le 1}\int_T^t \int_{\R^d}
  \psi(y)\d_t\varrho_\nu(s,y)\dd s \dd y\\
&=\sup_{\|\psi\|_{\rm Lip}\le 1}\int_T^t \int_{\R^d}
  \psi(y)\frac{1}{\<s\>^2}\diver j_\nu (s,y)\dd s \dd y\\
  &=\sup_{\|\psi\|_{\rm Lip}\le 1}\int_T^t \int_{\R^d}
    \nabla \psi(y)\frac{1}{\<s\>^2}j_\nu (s,y)\dd s \dd y\\
& \le \int_T^\infty \frac{1}{\<s\>^2}\|j_\nu(s)\|_{L^1}\dd s\lesssim \int_T^\infty
\frac{1}{\<s\>^2}\times \<s\>^{1-\min\(1,\frac{d\nu}{2}\)}  \dd s, 
\end{align*}
where we have used Cauchy-Schwarz inequality and
\eqref{eq:v-apriori-pseudo2}. We infer 
\begin{equation*}
  \sup_{t\ge T} W_1\(\varrho_\nu(t),\varrho_\nu(T)\)=o(1) \quad\text{as
  }T\to \infty,
\end{equation*}
uniformly in $\nu\in [(1-\eta)\si,(1+\eta)\si]$, hence
Theorem~\ref{theo:W1}. 

\subsection{Interpolation}
\label{sec:interpolation}
We now prove Corollary~\ref{cor:interpolation}. 
We know from \cite[Lemma~2.1]{HaurayMischler2014} that for any
$s>\frac{d+1}{2}$, there exists $C=C(s,d)$ such that any probability
measures $f$ and $g$,
\begin{equation*}
  \|f-g\|_{H^{-s}(\R^d)}\le C W_1(f,g)^{1/2}. 
\end{equation*}
Therefore, Theorem~\ref{theo:W1} implies that 
\begin{equation}\label{eq:CV-Hs}
 \forall s>\frac{d+1}{2},\quad  \sup_{t\in
   \R}\|\varrho_\nu(t)-\varrho_\si(t)\|_{H^{-s}(\R^d)}\Tend \nu 
  \si 0. 
\end{equation}
According to the value of $\si$ relative to $2/d$, we have an extra
boundedness property for $(\varrho_\nu)_\nu$ in some Sobolev space
$W^{k,p}$ (with $k=0$ or $1$), and by interpolation, we infer
convergence to zero in intermediate Sobolev spaces.

\subsubsection*{Case $\si>2/d$}

If $\si> 2/d$, we know in addition that for $\nu$ in a
neighborhood of $\si$, $\nu\ge 2/d$, hence $v_\nu\in
L^\infty(\R;H^1)$. 

$\bullet$ In the case $d=1$, $H^1$ is an algebra, so $\varrho_\nu\in
L^\infty(\R;H^1)$. Combined with the convergence \eqref{eq:CV-Hs},
this property implies that
\begin{equation*}
 \forall s<1,\quad   \sup_{t\in \R}\|\varrho_\nu(t)-\varrho_\si(t)\|_{H^{s}(\R^d)}\Tend \nu
  \si 0. 
\end{equation*}
In particular, the convergence holds in $L^\infty(\R;L^2(\R))$, and
since $\varrho_\nu\in L^\infty(\R;L^1)$,
\begin{equation*}
 \forall p\in (1,\infty],\quad   \sup_{t\in
   \R}\|\varrho_\nu(t)-\varrho_\si(t)\|_{L^p(\R^d)}\Tend \nu 
  \si 0. 
\end{equation*}
$\bullet$ In the case $d=2$, $H^1(\R^2)$ barely fails to be an
algebra, and we check that $\varrho_\nu\in
L^\infty(\R;W^{1,p}\cap L^q)$ for all $p\in [1,2)$ and all $q\in
[1,\infty)$. Indeed, $v_\nu\in 
L^\infty(\R;L^p)$ for all $p\in [2,\infty)$, and $\nabla v_\nu\in
L^\infty(\R;L^2)$, hence the announced property by H\"older
inequality. Therefore, $\varrho_\nu$ converge toward $\varrho_\si$ in
$L^\infty(\R;X)$ for any space $X$ obtained by (real) interpolation
between $H^{-s}$ and $W^{1,p}$, and different from $W^{1,p}$. We find
in \cite{Triebel}, Theorem~1 in Section~2.4.2 (pp.~184-185) that this
space is $W^{s_\theta,p_\theta}$, where
\begin{equation*}
  s_\theta = -s\theta + (1-\theta),\quad \frac{1}{p_\theta}
  =\frac{\theta}{2}+\frac{1-\theta}{p},\quad \theta\in [0,1).  
\end{equation*}
We do not pursue the computation in full detail, and simply note that
for fixed $s$, we can find $\theta_0\in (0,1)$ such that
$s_{\theta_0}=0$. By interpolating with the boundedness in
$L^\infty L^q$ for all $q\in [1,\infty)$, we obtain the convergence in
$L^\infty L^q$ for all $q\in (1,\infty)$. 

$\bullet$ In the case $d\ge 3$, $H^1(\R^d)\hookrightarrow
L^{\frac{2d}{d-2}(\R^d)}$, and H\"older inequality implies that $\varrho_\nu\in
L^\infty(\R;W^{1,\frac{d}{d-1}})$, since
\begin{equation*}
  \frac{1}{2}+\frac{d-2}{2d} = \frac{d-1}{d}. 
\end{equation*}
Like in the case $d=2$, we simply note that there exists $\theta\in
(0,1)$ such that
\begin{equation*}
  \left[ H^{-s},W^{1,\frac{d}{d-1}}\right]_\theta = L^{p(d)},
\end{equation*}
for some $p(d)\in (1,2)$ whose exact value we do not compute. Combining
this with the boundedness in $L^\infty(\R;L^1)$, we obtain the
convergence in $L^p$ for all $p\in (1,p(d)]$. 

\subsubsection*{Case $\si<2/d$}
In this case, we only know that $v_\nu\in
L^\infty(\R;L^{2\si+2}\cap L^2)$, hence $\varrho_\nu\in
L^\infty(\R;L^{\si+1}\cap L^1)$, that is $\varrho_\nu\in
L^\infty(\R;W^{0,p})$ for all $p\in [1,1+\si]$. The interpolation
between $H^{-s}$ and $W^{0,p}$ is now given by
\begin{equation*}
   \left[ H^{-s},W^{0,p}\right]_\theta =W^{s_\theta,p_\theta},\quad
   s_\theta = -\theta s
   ,\quad\frac{1}{p_\theta}=\frac{\theta}{2}+\frac{1-\theta}{p},\quad
   \theta\in [0,1].
\end{equation*}
The result stated in Corollary~\ref{cor:interpolation} follows, by
considering the limit $\theta\to 0$.
\smallbreak

In the case $\si=2/d$, considering the case $\nu>\si$ makes it
possible to use the results from the case $\si>2/d$, while if
$\nu<2/d$, we have to use the results from the case $\si<2/d$. As the
latter are weaker than the former, the case $\si=2/d$ is included in
the statement corresponding to $\si<2/d$ in
Corollary~\ref{cor:interpolation}.

\section{Analysis of the dispersion equations}
\label{sec:ODE}
The proof of the convergence of the solution $\u_\si$ of the power
nonlinear Schr\"odinger equation \eqref{eq:rescaledNLS} to the
solution $\u_0$ of the logarithmic Schr\"odinger equation
\eqref{eq:logNLS}  involves ordinary differential equations, which
were used to describe the dispersive rate for $\u_0$ in \cite{CaGa18},
and were generalized in \cite{CCH22} in order to consider the power
case.  In this section, we recall some properties of the solutions to
these ordinary differential equations, and show convergence results at
this level when $\si\to 0$. 
\subsection{Power nonlinearity}

Like in \cite{CCH22}, for $\alpha>0$, consider
\begin{equation}\label{eq:ODEgen}
  \ddot r_\alpha = \frac{\alpha}{2r_\alpha^{\alpha+1}}\quad ;\quad r_\alpha(0)=1,\
  \dot r_\alpha(0)=0.
\end{equation}
Multiply by $\dot r_\alpha$ and integrate between $0$ and $t$:
\begin{equation}\label{eq:ODEgen-int}
  \(\dot r_\alpha\)^2 = 1-\frac{1}{r_\alpha^\alpha}.
\end{equation}
So long as $r_\alpha\ge 0$, $\ddot r_\alpha\ge 0$: $\dot r_\alpha$ is
nondecreasing, hence nonnegative, so $r_\alpha$ is nondecreasing (and
remains positive). If $r_\alpha$ is bounded, then $\ddot r_\alpha\gtrsim 1$,
hence a contradiction by integrating. Therefore, since $r_\alpha$ is
nondecreasing,
\begin{equation*}
  r_\alpha(t)\Tend t {+\infty} +\infty. 
\end{equation*}
Therefore, \eqref{eq:ODEgen-int} implies
\begin{equation*}
  \dot r_\alpha(t)\Tend t {+\infty} 1,
\end{equation*}
and so
\begin{equation*}
  r_\alpha(t)\Eq t {+\infty}t. 
\end{equation*}
We go one step further compared to \cite{CCH22}, and examine the next
term in the asymptotic expansion of $r_\alpha$ as $t\to +\infty$. Write
\begin{equation*}
  r_\alpha(t) = t + w_\alpha(t),
\end{equation*}
so \eqref{eq:ODEgen-int} yields
\begin{equation*}
  \(1+\dot w_\alpha\)^2 = 1-\frac{1}{(t+w_\alpha)^\alpha}.
\end{equation*}
As $w_\alpha(t)=o(t)$,
\begin{equation*}
  \(t+w_\alpha\)^{-\alpha} = t^{-\alpha}\( 1 +\frac{w_\alpha}{t}\)^{-\alpha} =
  t^{-\alpha}\( 1 -\alpha\frac{w_\alpha}{t} +\O\(\(\frac{w_\alpha}{t}\)^2\)\).
\end{equation*}
At leading order, we have
\begin{equation*}
  2\dot w _\alpha= -\frac{1}{t^\alpha},\quad\text{hence } w_\alpha(t) =
  \frac{1}{2(1-\alpha)} t^{1-\alpha}. 
\end{equation*}
We see that for $\alpha\approx\frac{1}{\ln t}$, $w_\alpha$ becomes
comparable with 
the leading order term, since $t^{-\alpha}$ becomes $\O(1)$.
\begin{remark}
  We note in passing that the function $r_{\rm lin}(t) = \<t\>$
  considered in Theorem~\ref{theo:W1} (and Section~\ref{sec:global-gen})
  is equal to $r_2$, since it solves 
  \begin{equation*}
    \ddot r_{\rm lin}=\frac{1}{r_{\rm lin}^3},\quad r_{\rm
      lin}(0)=1, \ \dot r_{\rm lin}(0)=0.
  \end{equation*}
  This function appears for instance when
  considering the evolution of Gaussian initial data under the linear
  Schr\"odinger flow $e^{i\frac{t}{2}\Delta}$ (see
  e.g. \cite[Section~3]{CaGa18}).   
\end{remark}
\subsection{Logarithmic case and transition}
The equation considered in \cite{CaGa18} reads
\begin{equation}
  \label{eq:ODElog}
  \ddot \tau_0 = \frac{1}{2\tau_0}\quad ;\quad \tau_0(0)=1,\
  \dot \tau_0(0)=0,
\end{equation}
and we recall the main features of the solution (see \cite{CaGa18}
for a complete justification). 
Multiply by $\dot \tau_0$ and integrate between $0$ and $t$:
\begin{equation}\label{eq:ODElog-int}
  \(\dot \tau_0\)^2 = \ln\tau_0,
\end{equation}
which readily implies $\tau_0\ge 1$. Like above, $\tau_0$ is convex, and 
\begin{equation*}
  \tau_0(t)\Tend t {+\infty} +\infty. 
\end{equation*}
This is the beginning of the analysis presented in \cite{CaGa18},
which also yields:
\begin{equation}\label{eq:disp-log}
  \tau_0(t)\Eq t {+\infty} t\sqrt{\ln t},\quad
  \dot\tau_0(t)\Eq t {+\infty} \sqrt{\ln t}.
\end{equation}

We set
\begin{equation*}
  \tau_\si(t) = r_{d\si}\(\frac{t}{\sqrt\si}\).
\end{equation*}
It solves
\begin{equation}\label{eq:tau-alpha}
  \ddot{\tau}_\si=\frac{1}{2\tau_\si^{d\si+1}}\quad
  ;\quad \tau_\si(0)=1,\quad \dot{\tau}_\si(0)=0.
\end{equation}
We have
\begin{equation*}
  \tau_\si(t)\Eq t {+\infty}\frac{t}{\sqrt \si}. 
\end{equation*}
Taking the boundary layer $\si\approx \frac{1}{\ln t}$ identified
above, we have
\begin{equation*}
  \tau_{\frac{1}{\ln t}}(t)\approx t\sqrt{\ln t},
\end{equation*}
which is the same order of magnitude as $\tau_0$. 
\smallbreak

The choice $\alpha=d\si$ is motivated by the forthcoming analysis of
the pseudo-energy $\En_\si$ defined in \eqref{eq:En-sigma} in the
proof of Lemma~\ref{lem:apriori-unif-si-t} (see also \cite{CCH22}). 

\subsection{Convergence}

We want to prove that $\tau_\si \to\tau_0$ as $\si\to
0$. The  convergence cannot 
hold uniformly in time, since the large time behavior of $\tau_0$ is
different from that of $\tau_\si$ for $\si>0$, and the above approach suggests that a
transition occurs for $\si$ of order $1/\ln t$. Subtracting the analogue of
\eqref{eq:ODEgen-int} for $\tau_\si$, from
\eqref{eq:ODElog-int}, we find
\begin{align*}
  \(\dot \tau_0\)^2- \(\dot \tau_\si\)^2
&= \ln\tau_0-\frac{1}{d\si}\(1- \frac{1}{\tau^{d\si}_\si}\)\\
&=
  \frac{1}{d\si}\( \frac{1}{\tau_\si^{d\si}}- 
\frac{1}{\tau_0^{d\si}}\) +
\underbrace{\frac{1}{d\si}\( \frac{1}{\tau_0^{d\si}} 
-1+d\si \ln \tau_0\)}_{=:S_\si}.
\end{align*}
As $\tau_0(t)\ge 1$ for all $t\ge 0$, and since
$e^{-z}-1+z\ge 0$ for all $z\ge 0$,  the source term $S_\si$ is
nonnegative for all $t\ge 0$. This implies $\tau_0(t)\ge
\tau_\si(t)$ and $\dot \tau_0(t)\ge
\dot \tau_\si(t) $ for all $t\ge 0$. Let 
\begin{equation*}
  w= \tau_0-\tau_\si,\quad y = w^2
  +(\dot w)^2. 
\end{equation*}
We have
\begin{equation*}
  \ddot w =\frac{1}{2 \tau_0} - \frac{1}{2
    \tau_\si^{1+d\si}}= \frac{1}{2
    \tau_0^{1+d\si}}\(\tau_0^{d\si}-1\)
  +\frac{\tau_\si^{1+d\si}-
    \tau_0^{1+d\si}}{2\tau_\si^{1+d\si}\tau_0^{1+d\si}}. 
\end{equation*}
Writing
\begin{equation*}
  \tau_\si^{1+d\si} = \tau_0^{1+d\si}\(1-\frac{w}{\tau_0}\)^{1+d\si} ,
\end{equation*}
Taylor's formula yields
\begin{equation*}
  \tau_\si^{1+d\si}-    \tau_0^{1+d\si} =-(1+d\si)
  \tau_0^{1+d\si}\frac{w}{\tau_0} \underbrace{\int_0^1
  \(1-\theta\frac{w}{\tau_0}\)^{d\si}\dd \theta}_{\le 1,\text{ since
  }w\ge 0.}.  
\end{equation*}
On the other hand, for any $T>0$ (independent of $\si$), we have,
uniformly in $t\in [0,T]$,
\begin{equation*}
  0\le \frac{1}{2
    \tau_0^{1+d\si}}\(\tau_0^{d\si}-1\)\le C(T)
  \frac{\si}{\tau_0^{1+d\si}}\le C(T)\si. 
\end{equation*}
Therefore,
\begin{equation*}
  \dot y 
= 2w\dot w +
   2\dot w\ddot w\le 2
 w\dot w + C(T)\si
 |\dot w| +  \tau_0^{d\si} |w\dot w|
\end{equation*}
Using again the fact that $\tau_0$ is bounded on $[0,T]$, Young
inequality yields
\begin{equation*}
   \dot y \le y + C(T)^2\si^2 + (\dot w)^2+ C(T) y\le
   K(T)y + C(T)^2\si^2,
\end{equation*}
and we conclude by Gr\"onwall lemma:
\begin{lemma}\label{lem:cvODE-bounded}
  Let $\si>0$. For all $t\ge 0$,
  \begin{equation}
    \label{eq:compar-tau}
    \tau_0(t)\ge \tau_\si(t),\quad \dot\tau_0(t)\ge \dot \tau_\si(t).
  \end{equation}
  Let $T>0$. There exists $C=C(T)>0$ such that for all $\si\in (0,1)$,
  \begin{equation*}
    \sup_{t\in [0,T]} |\tau_0(t)-\tau_\si(t)| +  \sup_{t\in [0,T]}
    |\dot \tau_0(t)-\dot \tau_\si(t)|\le C\si. 
  \end{equation*}
\end{lemma}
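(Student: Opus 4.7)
The plan is to separate the proof into the ordering statement \eqref{eq:compar-tau} and the $O(\si)$ bound on $[0,T]$. For the ordering, I would rewrite both ODEs in autonomous form using the first integrals: $\dot\tau_0 = \sqrt{\ln\tau_0}$ and $\dot\tau_\si = \sqrt{(1-\tau_\si^{-d\si})/(d\si)}$ (the sign is fixed by convexity and $\dot\tau(0) = 0$). The elementary inequality $1 - e^{-z} \le z$ with $z = d\si \ln\tau \ge 0$ yields $\frac{1}{d\si}(1 - \tau^{-d\si}) \le \ln\tau$ for all $\tau \ge 1$, so the right-hand side of the $\tau_0$ ODE dominates pointwise that of the $\tau_\si$ ODE. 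A standard scalar comparison principle then gives $\tau_0 \ge \tau_\si$, and monotonicity of the first integrals in $\tau$ delivers $\dot\tau_0 \ge \dot\tau_\si$.

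For the quantitative estimate, I would follow the setup indicated just above the statement: set $w := \tau_0 - \tau_\si$, $y := w^2 + (\dot w)^2$, and rely on the identity
\[
  \ddot w = \frac{\tau_0^{d\si}-1}{2\tau_0^{1+d\si}} + \frac{\tau_\si^{1+d\si} - \tau_0^{1+d\si}}{2\tau_\si^{1+d\si}\tau_0^{1+d\si}}.
\]
On $[0,T]$, $\tau_0$ is bounded, so $|\tau_0^{d\si} - 1| = |e^{d\si\ln\tau_0} - 1| \le C(T)\si$ uniformly, while a Taylor expansion in $w/\tau_0$ of the second fraction (using $\tau_0 \ge 1$ and the ordering $w \ge 0$ from the first step) bounds it by $C(T)|w|$. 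This yields the pointwise bound $|\ddot w| \le C(T)\si + C(T)|w|$ on $[0,T]$. Plugging into $\dot y = 2w\dot w + 2\dot w\ddot w$ and applying Young's inequality gives $\dot y \le K(T)\, y + C(T)^2\si^2$, and since $y(0) = 0$, Grönwall's lemma produces $y(t) \lesssim \si^2$ on $[0,T]$, which is exactly the announced bound on $|w|$ and $|\dot w|$.

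The main obstacle is really the comparison step at $t = 0$: the two trajectories share the same initial position and velocity, and the autonomous vector field $\sqrt{\ln\tau}$ is nonsmooth at $\tau = 1$, so a standard scalar comparison cannot start there. I would resolve this with a short Taylor expansion at the origin: differentiating the two ODEs at $t=0$ through order four gives $\tau_0^{(4)}(0) - \tau_\si^{(4)}(0) = d\si/4 > 0$, hence $\tau_0(t) - \tau_\si(t) = (d\si/96)\,t^4 + o(t^4)$ as $t \to 0^+$. Thus at any sufficiently small positive time both trajectories lie strictly above $1$ and are already separated in the correct direction, and the autonomous comparison then applies cleanly on $\{\tau > 1\}$ for all subsequent times.
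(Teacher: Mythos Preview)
Your proposal is correct and follows essentially the same route as the paper: the quantitative part (setting $w=\tau_0-\tau_\si$, $y=w^2+\dot w^2$, bounding $\ddot w$ via the displayed identity, Young's inequality, and Gr\"onwall) is exactly what the paper does. For the ordering \eqref{eq:compar-tau}, the paper writes the identity $(\dot\tau_0)^2-(\dot\tau_\si)^2=\frac{1}{d\si}\bigl(\tau_\si^{-d\si}-\tau_0^{-d\si}\bigr)+S_\si$ with $S_\si\ge 0$ and simply asserts that this implies $\tau_0\ge\tau_\si$, $\dot\tau_0\ge\dot\tau_\si$; your autonomous comparison via $\dot\tau=\sqrt{\,\cdot\,}$ is the same mechanism, and your fourth-order Taylor computation at $t=0$ (yielding $\tau_0-\tau_\si=(d\si/96)t^4+o(t^4)$) supplies the small-time separation that the paper leaves implicit.
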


Using this first result, we now address the convergence on larger time
intervals:
\begin{proposition}\label{prop:cvODE}
  There exists $C>0$ such that for all $\si\in (0,1]$,
  \begin{equation*}
    |\tau_0(t)- \tau_\si(t)|\le C \si t\(\ln t\)^{3/2},\quad
    |\dot\tau_0(t)- \dot\tau_\si(t)|\le C \si \(\ln t\)^{3/2}, \quad \forall
    t\ge 2.
  \end{equation*}
\end{proposition}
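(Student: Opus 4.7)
The plan is to derive a pointwise differential inequality for $\dot w := \dot\tau_0 - \dot\tau_\si$ of the form $\dot w \leq C\si(\ln t)^{3/2} + (\text{mild feedback})\cdot w$ and close it by Gronwall-plus-bootstrap; the bound on $w$ then follows by integration. I first split into two regimes via a small fixed constant $K$: when $\si\ln t \geq K$ the claim is trivial, since $w \leq \tau_0 \lesssim t\sqrt{\ln t} \leq (1/K)\cdot \si t(\ln t)^{3/2}$ and similarly $\dot w \leq \dot\tau_0 = \sqrt{\ln\tau_0} \lesssim \sqrt{\ln t} \leq (1/K)\cdot \si(\ln t)^{3/2}$. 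So the real work is for $\si\ln t \leq K$, i.e.\ $t \in [T_0, e^{K/\si}]$, with Lemma~\ref{lem:cvODE-bounded} handling $t \in [0, T_0]$.

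Subtracting \eqref{eq:ODElog-int} from its analogue for $\tau_\si$ and factoring $(\dot\tau_0)^2 - (\dot\tau_\si)^2 = \dot w\,(\dot\tau_0 + \dot\tau_\si)$ gives the key identity
\begin{equation*}
  \dot w \,(\dot\tau_0 + \dot\tau_\si) \;=\; \underbrace{\frac{e^{-z_0} - 1 + z_0}{d\si}}_{S} \;+\; \underbrace{\int_{\tau_\si}^{\tau_0}\frac{\dd\eta}{\eta^{d\si+1}}}_{R}, \qquad z_0 := d\si\ln\tau_0,
\end{equation*}
with both terms nonnegative (reconfirming \eqref{eq:compar-tau}). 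The convexity bound $e^{-z}-1+z \leq z^2/2$ yields $S \leq (d\si/2)(\ln\tau_0)^2 \lesssim \si(\ln t)^2$, and clearly $R \leq w\,\tau_\si^{-d\si-1}$. Dividing by $\dot\tau_0 + \dot\tau_\si \geq \dot\tau_0 = \sqrt{\ln\tau_0} \gtrsim \sqrt{\ln t}$,
\begin{equation*}
  \dot w \;\leq\; C\si(\ln t)^{3/2} + \frac{w\,\tau_\si^{-d\si-1}}{\sqrt{\ln t}}.
\end{equation*}

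The closing step needs a lower bound $\tau_\si(t) \gtrsim t\sqrt{\ln t}$, obtained by bootstrap. Under the tentative hypothesis $w(t) \leq 2C^*\si t(\ln t)^{3/2}$ on an interval, one has $\tau_\si = \tau_0 - w \geq (c_0/2)\,t\sqrt{\ln t}$ as long as $\si\ln t \leq c_0/(4C^*) =: K$, since $\tau_0 \gtrsim c_0 t\sqrt{\ln t}$ from the established asymptotics; also $\tau_\si^{d\si} = O(1)$ in this regime, so $\tau_\si^{-d\si-1} \lesssim 1/(t\sqrt{\ln t})$, reducing the inequality to
\begin{equation*}
  \dot w \;\leq\; C\si(\ln t)^{3/2} + \frac{C_1 w}{t\ln t}.
\end{equation*}
Gronwall with integrating factor $(\ln t/\ln T_0)^{-C_1}$ (noting $\int \dd s/(s\ln s) = \ln\ln s$) combined with the standard estimate $\int_{T_0}^t (\ln s)^\alpha \dd s \lesssim t(\ln t)^\alpha$ (valid for any $\alpha$, by integration by parts) yields $w(t) \lesssim \si t(\ln t)^{3/2}$, since the amplification $(\ln t)^{C_1}$ cancels against $(\ln s)^{-C_1}$ under the integral. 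This strictly improves the bootstrap hypothesis (constant $C^*$ versus $2C^*$), so it propagates up to $t = e^{K/\si}$; the estimate on $\dot w$ then follows by plugging back into the differential inequality. The main obstacle is precisely this bootstrap: the naive lower bound $\tau_\si \geq 1$ would produce exponential $e^{t/c}$ growth from Gronwall, so the refined $\tau_\si \gtrsim t\sqrt{\ln t}$ (which is morally equivalent to controlling $w$ itself) is essential, and the interplay between the Gronwall constant $C^*$ and the threshold $K$ must be arranged carefully so that the loop closes self-consistently.
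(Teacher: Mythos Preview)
Your argument is correct. Both proofs start from the first integrals \eqref{eq:ODElog-int} and its analogue for $\tau_\si$, derive a differential inequality for $w=\tau_0-\tau_\si$, and close by Gronwall, but the executions differ in two substantive ways. First, the decompositions diverge: the paper splits $\dot w$ by inserting $\sqrt{\ln\tau_\si}$, producing a feedback term of size $\frac{\ln\ln t}{t\ln t}\,w$ and a Gronwall amplification $e^{\frac14(\ln\ln t)^2}$, which requires the ad hoc inequality $\ln(1+Y)\le \frac{Y}{1+Y}(1+\ln(1+Y/2))$ and a slightly delicate asymptotic computation; you instead split $(\dot\tau_0)^2-(\dot\tau_\si)^2$ by inserting $\frac{1}{d\si}(1-\tau_0^{-d\si})$, which is algebraically cleaner and yields feedback $\frac{C_1}{t\ln t}\,w$ with polynomial amplification $(\ln t)^{C_1}$. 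Second, the lower bound on $\tau_\si$ is obtained differently: the paper uses the crude bound $\tau_\si\ge c_0 t$ directly from convexity and Lemma~\ref{lem:cvODE-bounded}, avoiding any circularity; you bootstrap the sharper $\tau_\si\gtrsim t\sqrt{\ln t}$, which forces the regime split $\si\ln t\lessgtr K$ and the self-consistency check on $C^*$ versus $K$. Your loop does close, since $\tau_\si\ge 1$ gives $\tau_\si^{-d\si}\le 1$, so $C_1=2/c_0$ is independent of $K$ (your remark that $\tau_\si^{d\si}=O(1)$ is harmless but unnecessary here). The trade-off is that the paper's route is self-contained once $\tau_\si\ge c_0 t$ is in hand, while yours is more elementary in the estimates but relies on the bootstrap/continuity machinery.
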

Recall that $w=\tau_0 - \tau_{\si} \ge 0$. As we know that 
\[  \dot{\tau}_{\si}^2 = \frac{1}{d\si} \left( 1 -
    \frac{1}{\tau_{\si}^{d\si}} \right) \quad \text{and} \quad
  \dot{\tau}_0^2=\ln \tau_0,\] 
we can write
\[  \dot{w}= \sqrt{\ln \tau_0} - \frac{1}{\sqrt{d\si}} \sqrt{1 -
    \frac{1}{\tau_{\si}^{d\si}}} = \underbrace{f(0,\tau_0) -
    f(0,\tau_{\si})}_{= A(\tau_0,\tau_{\si})} +
  \underbrace{f(0,\tau_{\si})-f(\si,\tau_{\si})}_{= B}, \] 
where $f(\cdot,z)$ is continuous at every $z>0$,
\[ f(\si,z) := \frac{1}{\sqrt{d\si}}\sqrt{1-\frac{1}{z^{d\si}}} \underset{\si \rightarrow 0}{\longrightarrow} \sqrt{\ln z} =: f(0,z).  \]

To estimate $A$, we write that
\[ \sqrt{\ln \tau_0 } - \sqrt{\ln \tau_{\si}} = \sqrt{\ln \tau_0 } \left( 1 - \sqrt{\frac{\ln \tau_{\si}}{\ln \tau_0}} \right) = \sqrt{\ln \tau_0 }  \left(1 - \sqrt{1- \frac{\ln(\tau_0/\tau_{\si})}{\ln \tau_0} }\right). \]
Denote $X=\frac{\ln(\tau_0/\tau_{\si})}{\ln \tau_0}$ and remark that
as $1 \le \tau_{\si} \le \tau_0$, we have $0 \le X \le 1$. Then
from the estimate $\frac12 X \le 1-\sqrt{1-X} \le X$ for all $X \in
\left[0,1 \right]$, we get 
\[  0 \le A(\tau_0,\tau_{\si})  \le \frac{\ln \left( \frac{\tau_0}{\tau_{\si}}  \right)}{\sqrt{\ln \tau_0}}= \frac{\ln \left( 1+ \frac{\tau_0-\tau_{\si}}{\tau_{\si}}  \right)}{\sqrt{\ln \tau_0}}.  \]
We then denote $Y=\frac{\tau_0-\tau_{\si}}{\tau_{\si}}$, so that using
the inequality
\begin{equation*}
 \ln(1+Y) \le \frac{Y}{1+Y}\(1+\ln(1+Y/2)\), 
\end{equation*}
which is easily proved by studying the map $Y\mapsto (1+Y)\ln(1+Y)
-Y\(1+\ln(1+Y/2)\)$ (its second order derivative is negative for $Y>0$), 
we infer
\[ A(\tau_0,\tau_{\si}) \le \frac{\tau_0-\tau_{\si}}{\tau_0 \sqrt{\ln \tau_0}} \left(1+ \ln \left( \frac{\tau_0+\tau_{\si}}{2\tau_{\si}} \right) \right) .  \]
We now need to bound $\tau_{\si}$ from below. By definition,
$\tau_{\si}(0)=1$ and $\dot{\tau}_\si(0)=0$, so using the previous
estimate on compact time intervals, we know that for all $\epsilon>0$,
there exists $T_0>0$ independent of $\si>0$ such that for $t\ge T_0$,
\[  \dot{\tau}_{\si}(t)^2=\frac{1}{d\si}
  \left(1-\frac{1}{\tau_{\si}(t)^{d\si}} \right) \ge \frac{1}{d\si}
  \left(1-\frac{1}{(1+\epsilon)^{d\si}} \right) \ge C_0 , \] 
with $C_0>0$ independent of $\si\in (0,1]$. Taking the square root and
integrating over time, we then get that there exists $c_0>0$
independent of $\si$ such that 
\[ \tau_{\si}(t) \ge c_0 t   \]
for all $t\ge T_0$. We then infer from the behavior of $\tau_0$ that 
\[ \ln \left(\frac12 + \frac{\tau_0}{\tau_{\si}} \right) \le \ln
  \left( \frac12 +c_0^{-1} \sqrt{\ln t} \right) \le C \ln \ln t ,  \] 
for a constant $C>0$ independent of $\si$. This entails that there
exists a constant $C_A>0$ uniform in $\si\in (0,1]$ such that for all
$t \ge T_0$, 
\[ A(\tau_0,\tau_{\si}) \le C_A \frac{\tau_0-\tau_{\si}}{\tau_0}
  \frac{\ln \ln t}{ \sqrt{\ln \tau_0}}. \]

We now turn to the analysis of $B$: for $z>0$ fixed, Taylor expansion
in $\si$ yields
\[  \frac{1}{z^{d\si}} = 1 - d\si \ln z+ \int_0^{d\si}( d\si - \beta)
  \frac{(\ln z )^2}{z^{\beta}} \dd \beta , \] 
and we write
\[ f(\si,z)= \sqrt{ \ln z - \frac{1}{d\si} \int_0^{d\si} ( d\si -
    \beta) \frac{(\ln z )^2}{z^{\beta}} \dd \beta },  \] 
so
\[ f(0,z)-f(\si,z)=\sqrt{\ln z} \left( 1 - \sqrt{1 -
      \underbrace{\frac{\ln z}{d\si} \int_0^{d\si} \frac{d\si -
          \beta}{z^{\beta}} \dd \beta}_{=: \zeta_{\si}(z)} }
  \right).   \] 
Note that we have $0 \le \zeta_{\si}(z) \le 1$ for all $z \ge
1$. Thanks to the estimate
\begin{equation*}
  1-\sqrt{1-X} \le X,\quad  0 \le X \le 1,
\end{equation*}
we infer that
\[  f(0,\tau_{\si})-f(\si,\tau_{\si}) \le  \frac{(\ln
    \tau_{\si})^{3/2}}{d\si} \int_0^{d\si} \frac{d\si -
    \beta}{\tau_{\si}^{\beta}} \dd \beta \le \frac{d\si}{2} (\ln
  \tau_{0})^{3/2},\] 
using that $\tau_{\si} \le \tau_0$ and  $\tau_{\si} \ge 1$. To
summarize, we have proved that
\begin{equation}
  \label{eq:w-avant-gronwall}
  0 \le \dot{w} \le  \underbrace{\frac{ \ln \ln t}{2\tau_0 \sqrt{\ln
        \tau_0}}}_{=:\psi(t)} w + \underbrace{\frac{d\si}{2} (\ln
    \tau_{0})^{3/2}}_{=:K(t)},\quad \forall t\ge T_0. 
\end{equation}
Gr\"onwall lemma yields
\[ w(t) \le \exp\left(\int_{T_0}^t \psi(s) \dd s\right) w(T_0) + \int_{T_0}^t K(t) \exp\left(\int_s^t \psi(\gamma) \dd \gamma \right) \dd s.  \]
In view of \cite[Lemma~1.6]{CaGa18},
\[ \tau_0(s)= s \sqrt{\ln s} \left( 1 + \O \left(
      \frac{\ln\ln s}{\ln s} \right) \right),  \]
and so
\[ \ln \tau_0(s)=\ln s + \ln \sqrt{\ln s} + \O \left( \frac{\ln\ln
      s}{\ln s} \right).  \] 
We then compute 
\begin{align*}
  \psi(s) & = \frac{\ln \ln s}{2 \tau_0(s) \sqrt{\ln{\tau_0}(s)}} \\
  & = \frac{\ln \ln s}{2 s \sqrt{\ln s} \left(1 + \O \left(
    \frac{\ln\ln s}{\ln s} \right) \right) \sqrt{ \ln s + \ln \sqrt{\ln s} +
    \O \left( \frac{\ln\ln s}{\ln s} \right)} }  \\ 
  & = \frac{\ln \ln s}{2 s \ln s \left(1 + \O \left( \frac{\ln\ln s}{\ln s}
    \right) \right) \sqrt{1 + \frac{\ln \ln s }{2\ln s} +
    \left(1 + \O \left( \frac{\ln\ln s}{(\ln s)^2} \right) \right) }} \\ 
  & = \frac{\ln \ln s}{2 s \ln s } \frac{1}{ 1 +\O\( \frac{\ln\ln
    s}{ \ln s}\) }  = \frac{\ln \ln s}{2 s \ln s } \( 1 +\O\( \frac{\ln\ln
    s}{ \ln s}\)\), 
  \end{align*}
hence, since $s\mapsto \frac{(\ln\ln s)^2}{s(\ln s)^2}$ is integrable
as $s\to \infty$ (it is controlled by $\frac{1}{s(\ln s)^{3/2}}$ for instance),
\[ \int_{T_0}^t \psi(\gamma) \dd \gamma = \frac{1}{4}  (\ln \ln
  t)^2 +\O(1)\quad \text{as }t\to \infty.  \]
We can then estimate the first term in the Gr\"onwall inequality,
using Lemma~\ref{lem:cvODE-bounded} on $[0,T_0]$, so that 
\[ \exp\left(\int_{T_0}^t \psi(s) \dd s\right) w(T_0) \le C(T_0) \si e^{\frac{1}{4}(\ln \ln t)^2} , \]
for a constant $C(T_0)>0$ independent of $\si$ and $t$. For the second
term, we once again use the asymptotic behavior of $\tau_0$ together
with the previous bound on $\psi$ so that 
\begin{align*}
 \int_{T_0}^t K(t) \exp\left(\int_s^t \psi(\gamma) \dd \gamma \right) \dd s & \lesssim \si e^{\frac{1}{4}(\ln \ln t)^2} \int_{T_0}^t (\ln s )^{\frac{3}{2}} e^{-\frac{1}{4}(\ln \ln s )^2} \dd s  \\
 & \lesssim  \si e^{\frac{1}{4}(\ln \ln t)^2}  (\ln t )^{\frac{3}{2}} \int_{T_0}^t e^{-\frac{1}{4}(\ln \ln s )^2} \dd s.
 \end{align*}
The last integral diverges when $t \to +\infty$, as can be seen by
integrating by parts: 
 \[ \int_{T_0}^t e^{-\frac{1}{4}(\ln \ln s )^2} \dd s = \left[ s e^{-\frac{1}{4}(\ln \ln s )^2}  \right]^{t}_{T_0} +  \frac{1}{2} \int_{T_0}^t  \frac{\ln \ln s }{ \ln s} e^{-\frac{1}{4}(\ln \ln s )^2} \dd s. \]
Since $ \frac{\ln \ln s }{ \ln s} \to 0$ as $s\to \infty$, the
boundary term (at $s=t$) provides the leading order behavior in the 
asymptotic expansion as  $t \to 
\infty$, so finally we have   
 \[  \int_{T_0}^t K(t) \exp\left(\int_s^t \psi(\gamma) \dd \gamma \right) \dd s \lesssim  \si t (\ln t)^{3/2}.    \]
 Going back to the Gr\"onwall inequality, we finally get by comparing behaviors that
\[ w(t) \lesssim \si t (\ln t)^{3/2}.  \]
In the regime $\si = \frac{1}{\ln t}$, we recover the time growth of
$w$, induced by $\tau_0$. Going back to \eqref{eq:w-avant-gronwall},
we infer the announced estimate for $\dot w= \dot
\tau_0-\dot\tau_\si$.

\section{Uniform estimates}
\label{sec:apriori}

In this section, we prove large time a priori estimates for the
solutions to \eqref{eq:rescaledNLS} and \eqref{eq:logNLS}. To do so, we
modulate the corresponding solutions $\u_\si$ and $\u_0$ by using the
dispersion rates $\tau_\si$ and $\tau_0$ considered in
Section~\ref{sec:ODE}.

\subsection{Dispersion rescaling: logarithmic case}
Let $\tau_0$ denote the solution to \eqref{eq:ODElog}. As established
in \cite{CaGa18} (with a different normalization for $\tau_0$), the
function $v_0$, related to the solution $\u_0$ to 
\eqref{eq:logNLS} via
\begin{equation}\label{eq:u-v-log}
   \u_0(t,x) =
  \frac{1}{\tau_0(t)^{d/2}}v_0\(t,\frac{x}{\tau_0(t)}\)
e^{i\frac{\dot\tau_0(t)}{\tau_0(t)}\frac{|x|^2}{2}},
\end{equation}
which is nothing but \eqref{eq:u-v} with indices $0$, solves:
\begin{equation*}
  i{\partial}_t v_0 +\frac{1}{2\tau_0(t)^2}\Delta  v_0 =
 \frac{|y|^2}{4}v_0+ v_0\ln(|v_0|^2)- d v_0\ln \tau_0. 
\end{equation*}
The last term is removed by an explicit time-dependent gauge
transform. 
The pseudo-energy
\begin{equation*}
  \En_0(t)= \frac{1}{2\tau_0(t)^2}\|\nabla v_0(t)\|_{L^2}^2 
+ \frac{1}{4}\|yv_0(t)\|_{L^2}^2+ \int_{\R^d} |v_0(t,y)|^2\ln
(|v_0(t,y)|^2)\dd y
\end{equation*}
satisfies
\begin{equation*}
  \dot\En_0= -2\frac{\dot
    \tau_0}{\tau_0}\times\frac{1}{2\tau_0(t)^2}\|\nabla_y v_0(t)\|_{L^2}^2 .
\end{equation*}
We infer the a priori estimates (see \cite{CaGa18} for details)
\begin{equation}\label{eq:apriori-log-v0}
 \sup_{t\ge 0}\left(\int_{{\mathbb R}^d}\left(1+|y|^2+\left|\ln
    |v_0(t,y)|^2\right|\right)|v_0(t,y)|^2\dd y +\frac{1}{\tau_0(t)^2}\|\nabla
  v_0(t)\|^2_{L^2}\right)<\infty
\end{equation} 
and
\begin{equation}\label{eq:integralkin}
 \int_0^\infty \frac {\dot \tau_0(t)}{\tau_0^3(t)}\|\nabla
  v_0(t)\|^2_{L^2({\mathbb R}^d)} \dd t<\infty.
\end{equation}
Back to $\u_0$, \eqref{eq:u-v-log}, \eqref{eq:disp-log}
and \eqref{eq:apriori-log-v0} yield the bound
\begin{equation}
  \label{eq:u0-H1}
  \|\nabla \u_0(t)\|_{L^2(\R^d)}\lesssim \<\ln t\>^{1/2}.
\end{equation}
If is shown in \cite{CaGa18} that this bound is sharp, a property that
we do not use here. 
\subsection{Toward the logarithmic nonlinearity}

If we consider \eqref{eq:rescaledNLS}, the
change of unknown 
\begin{equation}
  \label{eq:u-v}
  \u_\si(t,x) =
  \frac{1}{\tau_\si(t)^{d/2}}v_\si\(t,\frac{x}{\tau_\si(t)}\)
e^{i\frac{\dot\tau_\si(t)}{\tau_\si(t)}\frac{|x|^2}{2}}, 
\end{equation}
leads to
\begin{equation*}
  i\d_t v_\si +\frac{1}{2\tau_\si^2}\Delta v_\si
  =\frac{|y|^2}{4\tau_\si^{d\si}}v_\si
  +\frac{1}{\si\tau_\si^{d\si}}|v_\si|^{2\si}v_\si -
    \frac{1}{\si}v_\si. 
\end{equation*}
Up to another gauge transform, we get
\begin{equation}\label{eq:v-sigma}
  i\d_t v_\si +\frac{1}{2\tau_\si^2}\Delta v_\si
  =\frac{|y|^2}{4\tau_\si^{d\si}} v_\si
  +\frac{1}{\si\tau_\si^{d\si}}\(|v_\si|^{2\si}-1\)v_\si . 
\end{equation}
This means that we have replaced the initial $v_\si$ with 
\begin{equation}\label{eq:vtilde-v}
  \tilde v_\si (t,y) = v_\si(t,y) \exp \( -i\frac{t}{\si} +
  i\int_0^t\frac{\dd s}{\si\tau_\si(s)^{d\si}}\). 
\end{equation}
We have dropped the tildas to lighten notations.
We note that if we forget that $\tau_\si$ solves \eqref{eq:tau-alpha}, then
the coefficient of the harmonic potential is given more generally by
$\tau_\si\ddot \tau_\si/2$. The choice $\alpha=d\si$ in
Section~\ref{sec:ODE} is made to ensure that the power of $\tau_\si$
in front of the two terms on the right hand side of \eqref{eq:v-sigma}
is the same, a property which is crucial in 
the proof of the following lemma:
\begin{lemma}\label{lem:apriori-unif-si-t}
Let $\eps\in (0,1/d)$, and $(\phi_\si)_{0\le \si\le \eps}$ bounded in
$\Sigma$. For $\u_\si$ solution to 
  \eqref{eq:rescaledNLS}, define $v_\si$ by
  \begin{equation*}
    \u_\si(t,x) =
    \frac{1}{\tau_\si(t)^{d/2}}v_\si\(t,\frac{x}{\tau_\si(t)}\)
    \exp\(i\frac{\dot\tau_\si(t)}{\tau_\si(t)}\frac{|x|^2}{2}\),
  \end{equation*}
  where $\tau_\si$ solves \eqref{eq:tau-alpha}.
  There exists $C$ independent of $t\ge 0$ and $\si\in (0,1/d)$ such
  that
\begin{align*}
  \frac{1}{\tau_\si(t)^{2-d\si}}\|\nabla v_\si(t)\|_{L^2}^2 
  + \|yv_\si(t)\|_{L^2}^2
  +
\int_{\R^d}
\left|\frac{|v_\si(t,y)|^{2\si}-1}{\si}\right| \lvert v_\si(t,y)\rvert^2\dd y\le C,
\end{align*}
and 
\begin{equation*}
  \int_0^\infty \frac{\dot \tau_\si(t)}{\tau_\si(t)^{3-d\si}}\|\nabla
  v_\si(t)\|_{L^2}^2\dd t\le C. 
\end{equation*}
\end{lemma}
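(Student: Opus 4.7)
The plan is to introduce the weighted pseudo-energy
\begin{equation*}
  \En_\si(t) := \frac{1}{2\tau_\si(t)^{2-d\si}}\|\nabla v_\si\|_{L^2}^2 + \frac{1}{4}\|y v_\si\|_{L^2}^2 + \frac{1}{\si}\int_{\R^d} G_\si(|v_\si|^2)\,\dd y,
\end{equation*}
where $G_\si(r) := \frac{r^{\si+1}}{\si+1} - r$ is the primitive of $r \mapsto r^\si - 1$ vanishing at $r=0$. This is precisely the natural Hamiltonian of \eqref{eq:v-sigma} multiplied by $\tau_\si^{d\si}$; as emphasized in the remark preceding the lemma, the choice $\alpha = d\si$ in the ODE \eqref{eq:tau-alpha} ensures that the harmonic potential and the nonlinear contribution in \eqref{eq:v-sigma} carry the same factor $\tau_\si^{-d\si}$, which is exactly what makes the time-dependence of these coefficients disappear after multiplication by $\tau_\si^{d\si}$. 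Testing \eqref{eq:v-sigma} against $\overline{\d_t v_\si}$, taking the real part, and using that along a Hamiltonian flow $dH/dt$ equals its explicit partial derivative in $t$, a direct computation yields
\begin{equation*}
  \dot{\En}_\si(t) = -\frac{(2-d\si)\dot\tau_\si(t)}{2\tau_\si(t)^{3-d\si}}\|\nabla v_\si(t)\|_{L^2}^2 \le 0,
\end{equation*}
since $\si < 1/d < 2/d$ and $\dot\tau_\si \ge 0$ (by \eqref{eq:ODEgen-int}). Integrating this identity on $[0, +\infty)$ will deliver the second conclusion of the lemma, provided $\En_\si$ stays bounded from below.

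For the initial bound $\En_\si(0) \le C$ uniformly in $\si$, I will use the pointwise estimate $|G_\si(r)/\si| \le C(r + r|\ln r| + r^{\si+1}\ln(e+r))$ valid on $r \ge 0$ and $\si \in (0, 1/d)$, derived from $\frac{r^\si - 1}{\si} = \int_0^1 r^{s\si} \ln r\,\dd s$: combined with $\phi_\si$ bounded in $\Sigma$, Gagliardo--Nirenberg for the $L^{2\si+2}$-norm, and the classical $\Sigma$-logarithmic integrability $\int|f|^2|\ln|f|^2|\,\dd y \le C(\|f\|_\Sigma)$, this yields the claim. To then extract each individual term of $\En_\si$ from $\En_\si(t) \le C$, I apply Jensen's inequality with probability measure $\dd\mu = |v_\si|^2 \dd y/M_\si$ and the convex function $\exp$: with $W := \int |v_\si|^2 \ln|v_\si|^2\,\dd y$,
\begin{equation*}
  \|v_\si\|_{L^{2\si+2}}^{2\si+2} = M_\si \int e^{\si \ln|v_\si|^2}\,\dd\mu \ge M_\si \exp(\si W/M_\si) \ge M_\si + \si W,
\end{equation*}
hence $\frac{1}{\si}\int G_\si(|v_\si|^2)\,\dd y \ge \frac{W - M_\si}{\si+1}$. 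Combined with the parametrized entropy inequality $W \ge -\frac{\lambda}{2}\|y v_\si\|_{L^2}^2 + c(M_\si, \lambda)$, obtained by comparing $|v_\si|^2/M_\si$ against a Gaussian of variance $1/\lambda$, this reads
\begin{equation*}
  \frac{1}{\si}\int G_\si(|v_\si|^2)\,\dd y \ge -\frac{\lambda}{2(\si+1)}\|y v_\si\|_{L^2}^2 - C.
\end{equation*}
Choosing for example $\lambda = 1/4$, the resulting coefficient $\frac{\lambda}{2(\si+1)} \le \frac{1}{8}$ is strictly less than $\frac{1}{4}$ uniformly in $\si \in (0, 1/d)$, so that $\En_\si \le C$ forces $\|y v_\si\|_{L^2}^2 \le C$ and then $\frac{1}{\tau_\si^{2-d\si}}\|\nabla v_\si\|_{L^2}^2 \le C$; consequently $\big|\frac{1}{\si}\int G_\si(|v_\si|^2)\,\dd y\big|$ is bounded uniformly, which in turn implies $\En_\si \ge -C$ and closes the integral estimate via $\dot{\En}_\si \le 0$.

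Finally, the third quantity $\int |(|v_\si|^{2\si} - 1)/\si| \cdot |v_\si|^2\,\dd y$ is handled through the pointwise identity $|r(r^\si - 1)| = r(r^\si - 1) + 2r(1 - r^\si)_+$: after integration and using the elementary calculation $\int r(r^\si-1)/\si\,\dd y = (\si+1)\int G_\si/\si\,\dd y + M_\si$,
\begin{equation*}
  \int\left|\frac{|v_\si|^{2\si} - 1}{\si}\right||v_\si|^2\,\dd y = (\si+1)\int\frac{G_\si(|v_\si|^2)}{\si}\,\dd y + M_\si + 2\int_{|v_\si|^2 \le 1}\frac{|v_\si|^2(1 - |v_\si|^{2\si})}{\si}\,\dd y,
\end{equation*}
the first two summands being already bounded, while the last is majorized by $2\int_{|v_\si|^2 \le 1}|v_\si|^2|\ln|v_\si|^2|\,\dd y$ (using the elementary inequality $(1 - r^\si)/\si \le |\ln r|$ on $(0, 1]$), itself controlled uniformly thanks to the already established bound $\|y v_\si\|_{L^2}^2 \le C$ combined with the $\Sigma$-logarithmic integrability. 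The main obstacle in the entire scheme is the mismatch of entropic constants: the sharp Gaussian bound would produce coefficient $\frac{1}{2}$ in front of $\|y v_\si\|_{L^2}^2$, one notch too large to be absorbed by the $\frac{1}{4}\|y v_\si\|_{L^2}^2$ of $\En_\si$; it is exactly the strict contraction factor $\frac{1}{\si+1} < 1$ arising from Jensen's inequality that makes the closure of the estimates uniform in $\si$.
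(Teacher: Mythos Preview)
Your proof is correct. Your pseudo-energy is exactly the paper's quantity $\tau_\si^{d\si}\En_\si$ shifted by the constant $-M_\si/(\si+1)$, and the monotonicity computation $\dot{\En}_\si=-\frac{(2-d\si)\dot\tau_\si}{2\tau_\si^{3-d\si}}\|\nabla v_\si\|_{L^2}^2$ is identical to the paper's computation of $\frac{\dd}{\dd t}(\tau_\si^{d\si}\En_\si)$.

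Where you diverge from the paper is in the extraction of the individual terms from $\En_\si\le C$. The paper splits $\En_\si=\En_\si^+-\En_\si^-$ according to $\{|v_\si|\ge1\}$ versus $\{|v_\si|<1\}$, bounds $\En_\si^-$ by a sublinear power of $\En_\si^+$ via the dual Gagliardo--Nirenberg inequality \eqref{eq:GNdual}, and closes by a bootstrap $\tau_\si^{d\si}\En_\si^+\lesssim 1+(\tau_\si^{d\si}\En_\si^+)^\beta$ with $\beta<1$. Your route is more direct: Jensen with $\exp$ gives $\frac{1}{\si}\int G_\si(|v_\si|^2)\ge \frac{W-M_\si}{\si+1}$, and the Gaussian relative-entropy inequality $W\ge-\frac{\lambda}{2}\|yv_\si\|_{L^2}^2+c(M_\si,\lambda)$ lets you absorb the negative part into $\frac14\|yv_\si\|_{L^2}^2$ in one step, no bootstrap needed. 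Both approaches ultimately feed the region $\{|v_\si|<1\}$ back into the dual Gagliardo--Nirenberg inequality to finish. Your argument is slightly cleaner; the paper's stays closer to the standard NLS toolbox.

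One cosmetic point: your closing remark overstates the role of the factor $\frac{1}{\si+1}$. Since $\lambda$ is a free parameter in the entropy inequality, taking $\lambda=1/4$ already gives coefficient $\lambda/2=1/8<1/4$ regardless of the $\frac{1}{\si+1}$; there is no genuine ``mismatch of entropic constants'' to overcome.
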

\begin{proof}
  Introduce the
pseudo-energy 
\begin{equation}\label{eq:En-sigma}
  \begin{aligned}
    \En_\si(t)&= \frac{1}{2\tau_\si(t)^2}\|\nabla v_\si(t)\|_{L^2}^2 
+ \frac{1}{4\tau_\si(t)^{d\si}}\|yv_\si(t)\|_{L^2}^2\\
&\quad +
\frac{1}{(\si+1)\tau_\si(t)^{d\si}}\int_{\R^d}
  \(\frac{|v_\si(t,y)|^{2\si}-1}{\si}\)|v_\si(t,y)|^2\dd y.
  \end{aligned}
\end{equation}
  Since the purely time dependent phase function in \eqref{eq:vtilde-v} does not affect
  $ \mathcal E_\si$, we may consider that $v_\si$ solves
  \eqref{eq:v-sigma}.
  We compute
  \begin{align*}
  \dot\En_\si(t)
  &=-\frac{\dot \tau_\si(t)}{\tau_\si(t)}\Bigg( \frac{1}{\tau_\si(t)^2}\|\nabla v_\si(t)\|_{L^2}^2
   +\frac{d\si}{4\tau_\si(t)^{d\si}} \|y v_\si(t)\|_{L^2}^2 \\
 &\quad
   + \frac{d\si}{(\si+1)\tau_\si(t)^{d\si}}\int_{\R^d}
\(\frac{|v_\si(t,y)|^{2\si}-1}{\si}\)|v_\si(t,y)|^2\dd y           
\Bigg) .
\end{align*}
We note that, like $\En_0$, $\En_\si$ is not the sum of positive terms, and decompose
it as $\En_\si=\En_\si^+-\En_\si^-$, where 
\begin{align*}
  \En_\si^+(t)&= \frac{1}{2\tau_\si(t)^2}\|\nabla v_\si(t)\|_{L^2}^2 
+ \frac{1}{4\tau_\si(t)^{d\si}}\|yv_\si(t)\|_{L^2}^2\\
&\quad +
\frac{1}{(\si+1)\tau_\si(t)^{d\si}}\int_{|v_\si|\ge 1}
  \(\frac{|v_\si(t,y)|^{2\si}-1}{\si}\)|v_\si(t,y)|^2\dd y,\\
   \En_\si^-(t)&=\frac{1}{(\si+1)\tau_\si(t)^{d\si}}\int_{|v_\si|< 1}
  \(\frac{1-|v_\si(t,y)|^{2\si}}{\si}\)|v_\si(t,y)|^2\dd y.
\end{align*}
Now $\En_\si^+$ is the sum of three nonnegative terms, and $\En_\si^-$
is nonnegative. Taylor formula for the function
$f(\si)=y^\si$ yields 
\begin{equation*}
  \frac{1-y^\si}{\si} = \ln \frac{1}{y}\int_0^1 y^{\theta \si}\dd\theta.
\end{equation*}
We infer
\begin{align*}
  \En_\si^-(t)&\le \frac{1}{(\si+1)\tau_\si(t)^{d\si}}\int_{|v_\si|< 1}
             |v_\si(t,y)|^2\ln \frac{1}{|v_\si(t,y)|^2}\dd y\\
  &\lesssim \frac{C(\eta)}{(\si+1)\tau_\si(t)^{d\si}}\int_{\R^d}
             |v_\si(t,y)|^{2-2\eta}\dd y,
\end{align*}
where $\eta>0$ is arbitrarily small. In view of the conservation of
the mass and \eqref{eq:GNdual}, this implies
\begin{equation}\label{eq:est-En-}
  \En_\si^- (t)\lesssim \frac{1}{\tau_\si(t)^{d\si}}\|y
  v_\si(t)\|_{L^2}^{\frac{d\eta}{2-2\eta}}\lesssim
  \frac{1}{\tau_\si(t)^{d\si}}\(
  \tau_\si^{d\si}\En_\si^+\)^{\frac{d\eta}{1-1\eta}} .
\end{equation}
This implies in particular that $\En_\si(0)$ is bounded uniformly in
$\si\in (0,1/d)$. 
In view of the derivative of $\En_\si$, we also have
\begin{equation*}
  \frac{\dd}{\dd t}\(\tau_\si^{d\si}\En_\si\) = -\dot\tau_\si
  \tau^{d\si-1}\(1-\frac{d\si}{2}\) \frac{1}{\tau_\si^2}\|\nabla
  v_\si\|_{L^2}^2\le -\dot\tau_\si
  \tau_\si^{d\si-1} \frac{1}{2\tau_\si^2}\|\nabla
  v_\si\|_{L^2}^2,
\end{equation*}
where we have used the facts that $\tau_\si,\dot\tau_\si\ge 0$ and
$\si<1/d$. We infer the uniform bound
\begin{equation*}
  \En_\si(t)\le \frac{\En_\si(0)}{\tau_\si(t)^{d\si}}\le
  \frac{C}{\tau_\si(t)^{d\si}}. 
\end{equation*}
Invoking \eqref{eq:est-En-}, we have
\begin{equation*}
  \tau_\si^{d\si}\En_\si^+\le C + \tau_\si^{d\si}\En_\si^- \lesssim 1
  + \(  \tau_\si^{d\si}\En_\si^+\)^{\frac{d\eta}{1-\eta}}.
\end{equation*}
Taking $\si>0$ such that $\frac{d\eta}{1-\eta}<1$ shows that
$\tau_\si^{d\si}\En_\si^+$ is bounded uniformly in $t\ge 0$ and
$\si\in (0,1/d)$. Again from \eqref{eq:est-En-}, this implies that so
is $\tau_\si^{d\si}\En_\si^-$, hence
$\tau_\si^{d\si}\(\En_\si^++\En_\si^-\)\le C$, which is the first
claim of the lemma. We infer that $\tau_\si^{d\si}\En_\si$ is uniformly bounded from
below, so its (nonpositive) derivative is integrable,
\begin{equation*}
   \int_0^\infty
   \frac{\dot\tau_\si(t)}{\tau_\si(t)^{3-d\si}}\|\nabla
  v_\si(t)\|_{L^2}^2\dd t\le C,
\end{equation*}
which completes the proof. 
\end{proof}

\section{From power to logarithmic nonlinearity:
  Ehrenfest time}
\label{sec:ehrenfest}

Since the rest of the paper is dedicated to the limit $\si\to 0$, we
shall assume from now on that $0<\si<1/(2d)$, and
$\eps\le 1/(2d)$ in the assumption of Theorem~\ref{theo:log-loc-temps}.

\subsection{Proof of Theorem~\ref{theo:log-loc-temps}}
In view of Lemma~\ref{lem:apriori-unif-si-t}, since $\tau_\si(t),\dot
\tau_\si(t)\ge 0$ for all $t\ge 0$,
we have
\begin{equation}\label{eq:borne-unif-vers-log}
  \begin{aligned}
  &\|\nabla \u_\si(t)\|_{L^2} \le \frac{1}{\tau_\si(t)}\|\nabla
  v_\si(t)\|_{L^2} + \dot \tau_\si(t) \|yv_\si(t)\|_{L^2}\le C \(
    \tau_\si(t)^{d\si/2} + \dot \tau_\si(t)\),\\
  &\|x\u_\si(t)\|_{L^2} =\tau_\si(t)\|yv_\si(t)\|_{L^2}\le C\tau_\si(t),
\end{aligned}
\end{equation}
where $C$ is independent of $\si\in (0,\eps)$ and $t\ge 0$. 
The difference $w=\u_\si-\u_0$ solves
\begin{equation*}
    i\d_t w +\frac{1}{2}\Delta w =
    \frac{1}{\si}\(|\u_\si|^{2\si}-1\)\u_\si - \u_\si\ln(|\u_\si|^2) +  \u_\si\ln(|\u_\si|^2) -\u_0\ln(|\u_0|^2),
\end{equation*}
with initial value $w_{\mid t=0}=\phi_\si-\phi_0$.  The source term is
\begin{equation}\label{eq:S_si}
  S_\si=  \frac{1}{\si}\(|\u_\si|^{2\si}-1\)\u_\si - \u_\si\ln(|\u_\si|^2) .
\end{equation}
Taylor formula applied to $\si\mapsto
(|z|^{2\si}-1)z=(e^{\si \ln|z|^2}-1)z$  yields
\begin{equation}\label{eq:S_si-Taylor}
  S_\si 
= \si \u_\si \(\ln(|\u_\si|^2)\)^2 \int_0^1 (1-\theta)
    |\u_\si|^{2\theta\si}\dd\theta, 
\end{equation}
hence the pointwise bound
\begin{equation*}
  |S_\si |\lesssim \si \(\ln(|\u_\si|^2)\)^2 \(
  |\u_\si|+|\u_\si|^{2\si+1}\). 
\end{equation*}
Let $\eta>0$: for $0<\si\le\eta/4$,  we have the uniform  pointwise estimate
\begin{equation*}
  |S_\si|\lesssim \si \(|\u_\si|^{1-\eta} + |\u_\si|^{1+\eta} \),
\end{equation*}
where the implicit constant depends on $\eta>0$. 
Gagliardo-Nirenberg inequality \eqref{eq:GN} yields, in view of
\eqref{eq:borne-unif-vers-log}, 
\begin{equation*}
 \left\||\u_\si|^{1+\eta}\right\|_{L^2}=
 \|\u_\si\|^{1+\eta}_{L^{2+2\eta}}\lesssim \|\u_\si\|_{L^2}^{1+\eta-d\eta/2}\|\nabla
 \u_\si\|_{L^2}^{d\eta/2} \lesssim \(\tau_\si(t)^{d\si/2}+\dot \tau_\si(t)\)^{d\eta/2}.
\end{equation*}
Similarly, \eqref{eq:GNdual} yields, in view of \eqref{eq:borne-unif-vers-log},
\begin{equation*}
 \left\||\u_\si|^{1-\eta}\right\|_{L^2}=
 \|\u_\si\|^{1-\eta}_{L^{2-2\eta}}\lesssim \|\u_\si\|_{L^2}^{1-\eta-d\eta/2}\|x
 \u_\si\|_{L^2}^{d\eta/2} \lesssim \tau_\si(t)^{d\eta/2}.
\end{equation*}
We infer
\begin{equation}\label{eq:source-log}
  \|S_\si(t)\|_{L^2}\lesssim \si\( \tau_\si(t)^{d\si/2}
  +\tau_\si(t)+\dot \tau_\si(t)\)^{d\eta/2}. 
\end{equation}
To complete the energy estimate, recall an identity discovered in
\cite{CaHa80}: 
\begin{lemma}[From Lemma~1.1.1 in \cite{CaHa80}]\label{lem:CH}
  There holds
  \begin{equation*}
    \left|\operatorname{Im}\left(\left(z_2 \log
 \left|z_2\right|^2-z_1 \log
 \left|z_1\right|^2\right)\left(\overline{z_2}-
 \overline{z_1}\right)\right)\right| 
    \le 2\left|z_2-z_1\right|^2, \quad \forall z_1, z_2 \in
    \mathbb{C}.
  \end{equation*}
\end{lemma}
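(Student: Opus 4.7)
The plan is to compute the imaginary part explicitly, factor the result as a logarithmic scalar times an imaginary bilinear form, and reduce the lemma to an elementary real-variable inequality.

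First I would expand the product directly. The ``diagonal'' contributions $|z_j|^2\log|z_j|^2$ are real and drop from the imaginary part, so only the cross terms survive; using $\IM(z_1\bar z_2)=-\IM(z_2\bar z_1)$, they collapse to
\[
 \IM\!\left((z_2\log|z_2|^2-z_1\log|z_1|^2)(\bar z_2-\bar z_1)\right)
 =\bigl(\log|z_1|^2-\log|z_2|^2\bigr)\,\IM(z_2\bar z_1).
\]
Thus the lemma is reduced to a pointwise estimate on a product of two real scalars.

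Second, I would bound the imaginary inner product. Writing $z_2\bar z_1=(z_2-z_1)\bar z_1+|z_1|^2$, and also $z_2\bar z_1=|z_2|^2-z_2\overline{(z_2-z_1)}$, and taking imaginary parts yields the twin bounds $|\IM(z_2\bar z_1)|\le|z_2-z_1|\,|z_j|$ for $j=1,2$. Multiplying them and extracting a square root gives
\[
 |\IM(z_2\bar z_1)|\le|z_2-z_1|\sqrt{|z_1||z_2|}.
\]
Combined with the reverse triangle inequality $\bigl||z_1|-|z_2|\bigr|\le|z_2-z_1|$ and the identity $\log|z|^2=2\log|z|$, the lemma reduces to the purely scalar inequality
\[
 |\log u-\log v|\sqrt{uv}\le|u-v|,\qquad u,v>0.
\]

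The last step, which I expect to be the technically densest piece, is this scalar bound. By symmetry I may assume $u\ge v$ and set $t=u/v\ge 1$; the inequality becomes $\sqrt t\,\log t\le t-1$. Setting $g(t)=t-1-\sqrt t\,\log t$, a short computation gives $g(1)=g'(1)=0$ and
\[
 g''(t)=\frac{\log t}{4t^{3/2}}\ge 0\quad\text{for }t\ge 1,
\]
so $g'$ is nondecreasing, hence nonnegative on $[1,\infty)$, and $g\ge 0$ follows by integration. Assembling everything and tracking the factor $2$ coming from $\log|z|^2=2\log|z|$ produces the claimed bound $2|z_2-z_1|^2$; the degenerate cases $z_1=0$ or $z_2=0$ are handled trivially (the left-hand side vanishes). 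The only real obstacle is the calculus argument above; a cleaner alternative would use $\log u-\log v=\int_v^u\!ds/s$ together with a Cauchy--Schwarz bound, yielding the same one-variable inequality.
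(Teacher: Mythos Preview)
Your proof is correct. Note, however, that the paper does not actually prove this lemma: it is simply quoted as a known result from Cazenave--Haraux (the reference \cite{CaHa80} in the paper), so there is no ``paper's own proof'' to compare against. Your argument --- reducing the imaginary part to $(\log|z_1|^2-\log|z_2|^2)\,\IM(z_2\bar z_1)$, bounding $|\IM(z_2\bar z_1)|\le|z_2-z_1|\sqrt{|z_1||z_2|}$, and then establishing the scalar inequality $\sqrt{t}\log t\le t-1$ --- is essentially the classical route and is complete as written.
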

The energy estimate then yields, together with Cauchy-Schwarz inequality,
\begin{equation*}
  \frac{\dd}{\dd t}\|w\|_{L^2}^2 \le 4 \|w\|_{L^2}^2 +
  2\|w\|_{L^2}\|S_\si\|_{L^2}\le 5\|w\|_{L^2}^2 +\|S_\si\|_{L^2}^2,
\end{equation*}
where we have used Young inequality. Gr\"onwall lemma and
\eqref{eq:source-log} imply: 
\begin{equation*}
  \|w(t)\|_{L^2}^2 \lesssim \|w(0)\|^2_{L^2} e^{5t}+\si^2 \int_0^t
  e^{5(t-s)}\( \tau_\si(s)^{d\si/2}
  +\tau_\si(s)+\dot \tau_\si(s)\)^{d\eta}\dd s.
\end{equation*}
In view of \eqref{eq:disp-log} and \eqref{eq:compar-tau}, we
infer the bound, uniform in $\si\in (0,\eps)$,
\begin{equation*}
   \|w(t)\|^2_{L^2} \lesssim \|w(0)\|^2_{L^2} e^{5t}+\si^2 \int_0^t
  e^{5(t-s)}\< s\sqrt{\ln   s }\>^{d\eta}\dd s.
\end{equation*}
This yields Theorem~\ref{theo:log-loc-temps}.

\subsection{Convergence in Wasserstein distance}
\label{sec:ehrenfest-wasserstein}

Recall that  the probability densities $\varrho_0$ and
$\varrho_\si$ are given by
\begin{equation*}
  \varrho_0(t,y) = \tau_0(t)^d\left\lvert
    \u_0\(t,y\tau_0(t)\)\right\rvert^2 \|\phi_0\|_{L^2}^{-2},\quad
   \varrho_\si(t,y) = \tau_\si(t)^d\left\lvert
    \u_\si\(t,y\tau_\si(t)\)\right\rvert^2 \|\phi_\si\|_{L^2}^{-2},
\end{equation*}
and, to lighten notations, introduce $\rho_0$ and
$\rho_\si$ are given by
\begin{equation*}
  \rho_0(t,y) = \tau_0(t)^d\left\lvert
    \u_0\(t,y\tau_0(t)\)\right\rvert^2,\quad
   \rho_\si(t,y) = \tau_\si(t)^d\left\lvert
    \u_\si\(t,y\tau_\si(t)\)\right\rvert^2.
\end{equation*}
We have established in Section~\ref{sec:apriori} the uniform bounds
for the second momenta of these densities: there exists
$C\(\|\phi\|_\Sigma\)$ independent of $t\in \R$ and $\si<1/(2d)$ such that
\begin{equation}\label{eq:bound-rho-2-mom}
  \int_{\R^d}\(1+|y|^2\) \(\rho_0(t,y) +\rho_\si(t,y)\)\dd y\le
  C\(\|\phi\|_\Sigma\) ,\quad \forall t\ge 0,\  \forall \si\in
  \(0,\frac{1}{2d}\). 
\end{equation}
Using the triangle inequality and changing variables in the integrals,
\begin{align*}
  \int_{\R^d} \left\lvert \rho_0(t,y)-\rho_\si(t,y)\right\rvert \dd
  y
  &\le \int_{\R^d} \left\lvert
    \tau_0(t)^d\left|\u_0\(t,y\tau_0(t)\)\right|^2
    -\tau_\si(t)^d\left|\u_0\(t,y\tau_\si(t)\)\right|^2 \right\rvert \dd 
    y\\
  &\quad + \int_{\R^d} \left\lvert |\u_0(t,x)|^2-|\u_\si(t,x)|^2\right\rvert \dd
    x\\
 &\lesssim \int_{\R^d} \left\lvert
    \left|\u_0\(t,y\)\right|^2
   -\(\frac{\tau_\si(t)}{\tau_0(t)}\)^d\left|\u_0\(t,y
   \frac{\tau_\si(t)}{\tau_0(t)}\)\right|^2\right\rvert \dd   y\\
  &\quad + \|\u_0(t)-\u_\si(t)\|_{L^2}\\
 &\lesssim \left| 1- \(\frac{\tau_\si(t)}{\tau_0(t)}\)^d\right|
   \|\u_0(t)\|_{L^2}^2 \\
  &\quad+ \int_{\R^d} \left\lvert
    \left|\u_0\(t,y\)\right|^2
   -\left|\u_0\(t,y
   \frac{\tau_\si(t)}{\tau_0(t)}\)\right|^2\right\rvert \dd   y
  + C_1\si e^{C_0t},
\end{align*}
where we have used \eqref{eq:compar-tau} and
Theorem~\ref{theo:log-loc-temps} for the last 
inequality. Proposition~\ref{prop:cvODE} implies, in view of the behavior of
$\tau_0$,
\begin{equation*}
  \left| 1- \frac{\tau_\si(t)}{\tau_0(t)}\right|\lesssim
  \si  \ln (t+4),\quad\forall t\ge 0,
\end{equation*}
where we add $4$ to
simplify notations for small $t$.
In view of \eqref{eq:compar-tau}, 
the first term of the previous right hand 
side is controlled by $\si \ln (t+4)$ for $t\ge 0$. For the second
term, in view of  Cauchy-Schwarz inequality,
 \begin{align*}
    \int_{\R^d} \left\lvert
    \left|\u_0\(t,y\)\right|^2
   -\left|\u_0\(t,y
  \frac{\tau_\si(t)}{\tau_0(t)}\)\right|^2\right\rvert \dd y
 &\le
   \(1+\(\frac{\tau_0(t)}{\tau_\si(t)}\)^{d/2}\)\|\u_0(t)\|_{L^2}\\
  &\quad \times 
   \left\| \u_0\(t,y\)-\u_0\(t,y
   \frac{\tau_\si(t)}{\tau_0(t)}\)\right\|_{L^2}.
 \end{align*}
Taylor
formula yields
\begin{equation*}
  \u_0\(t,y\)-\u_0\(t,y
   \frac{\tau_\si(t)}{\tau_0(t)}\)=
   \(1-\frac{\tau_\si(t)}{\tau_0(t)}\)\int_0^1 \nabla \u_0\(t,y\(
   1+\kappa \(\frac{\tau_\si(t)}{\tau_0(t)}-1\)\)\)\dd \kappa.
 \end{equation*}
 In view of \eqref{eq:compar-tau}, 
 \begin{equation*}
   1+\kappa \(\frac{\tau_\si(t)}{\tau_0(t)}-1\)
   \ge\frac{\tau_\si(t)}{\tau_0(t)},\quad\forall \kappa \in
   [0,1],\ \forall t\ge 0. 
 \end{equation*}
Using \eqref{eq:u0-H1}, we
infer, for $0\le t\lesssim \ln\frac{1}{\si}$ (which implies
$\frac{\tau_0(t)}{\tau_\si(t)}\lesssim 1$),
\begin{equation*}
  \int_{\R^d} \left\lvert
    \left|\u_0\(t,y\)\right|^2
   -\left|\u_0\(t,y
  \frac{\tau_\si(t)}{\tau_0(t)}\)\right|^2\right\rvert \dd y
  \lesssim  \si \ln (t+4) \|\nabla \u_0(t)\|_{L^2}\lesssim \si \(\ln
(t+4)\)^{3/2}.
\end{equation*}
This proves strong convergence in $L^1$ for $0\le t\lesssim
\ln\frac{1}{\si}$. Along with the uniform 
boundedness of second momenta \eqref{eq:bound-rho-2-mom},
we have in particular, for all $\theta\in (0,1)$,
\begin{equation*}
 \sup_{|t|\le \frac{\theta}{C_0}\ln\frac{1}{\si}}
 \int_{\R^d}|y|\left\lvert \rho_0(t,y)-\rho_\si(t,y)\right\rvert\dd 
  y\Tend \si 0 0.
\end{equation*}
As the same remains true for $\varrho_0$ and $\varrho_\si$, this
implies that for all $\theta\in (0,1)$, 
\begin{equation*}
 \sup_{|t|\le \frac{\theta}{C_0}\ln\frac{1}{\si}} W_1\(
 \varrho_0(t),\varrho_\si(t)\)\Tend \si 0 0.
\end{equation*}

\section{From power to logarithmic nonlinearity:   global dynamics}
\label{sec:log}

We keep the notations from the previous section. Recall that changing the unknown function like in \eqref{eq:u-v}, up
to a purely time dependent phase function \eqref{eq:vtilde-v}, the new
wave function solves \eqref{eq:v-sigma}. 
From Madelung transform $\rho_{\sigma}=|v_{\sigma}|^2$ and $J_{\sigma}=\IM(\overline{v_{\sigma}} \nabla v_{\sigma})$, we infer the system
\[ \left\{
\begin{aligned}
& \partial_t \rho_{\sigma} + \frac{1}{\tau_{\sigma}^2} \diver J_{\sigma} = 0,\\
& \partial_t J_{\sigma} + \frac{1}{(\sigma +1)
  \tau_{\sigma}^{d\sigma}} \nabla \rho_{\sigma}^{ \sigma +1} +
  \frac{2}{\tau_{\sigma}^{d\sigma}} y \rho_{\sigma} = \frac{1}{4
  \tau_{\sigma}^2} \Delta \nabla \rho_{\sigma} -
  \frac{1}{\tau_{\sigma}^2} \diver \RE ( \nabla v_{\sigma} \otimes
  \nabla \overline{v_{\sigma}} ) . 
\end{aligned} \right.  \]
Combining these equations and writing $\nu_{\sigma}=\RE ( \nabla v_{\sigma} \otimes \nabla \overline{v_{\sigma}} )$ leads to the following non-autonomous fractional Fokker-Planck equation with source terms
\begin{equation} \label{eq:rho_t}
\partial_t ( \tau_{\sigma}^2  \partial_t \rho_{\sigma} ) = \frac{1}{(\sigma +1) \tau_{\sigma}^{d\sigma}} \Delta \rho_{\sigma}^{ \sigma +1} + \frac{2}{\tau_{\sigma}^{d\sigma}} \diver (y \rho_{\sigma}) - \frac{1}{4 \tau_{\sigma}^2} \Delta^2 \rho_{\sigma} + \frac{1}{\tau_{\sigma}^2} \diver \diver \nu_{\sigma}.
\end{equation} 
Note that we have
\[   \partial_t ( \tau_{\sigma}^2  \partial_t \rho_{\sigma} ) = \tau_{\sigma}^2 \partial_t^2 \rho_{\sigma} + 2 \dot{\tau}_{\sigma}\tau_{\sigma} \partial_t \rho_{\sigma}.  \]
Here we recall that, as $t\to \infty$,
\[ \tau_{\sigma}(t) \sim \frac{t}{\sqrt{\sigma}}  \quad \text{and}
  \quad \dot{\tau}_{\sigma}(t) \rightarrow
  \frac{1}{\sqrt{\sigma}},  \] 
so $\tau_{\sigma}^2 \sim (\dot{\tau}_{\sigma}\tau_{\sigma} )^2$ and
both terms are of the same order in time, but not in $\si$. 

\subsection{Change of time variable}
We now make the change of variables $s=s_{\sigma}(t)$ such that
\[ \partial_s = (1-d \sigma)\dot{\tau}_{\sigma}(t)
  \tau_{\sigma}(t)^{d\sigma+1} \partial_t,    \] 
hence
\begin{align*}
s_{\sigma}(t) & =\int \frac{1}{(1-d \sigma)\dot{\tau}_{\sigma}(\gamma) \tau_{\sigma}(\gamma)^{d\sigma+1}} \dd \gamma 
 = 	\int \frac{\ddot{\tau}_{\sigma}(\gamma)}{2(1-d \sigma)\dot{\tau}_{\sigma}(\gamma)} \dd \gamma \\ 
& = \frac{1}{2(1-d \sigma)} \ln \dot{\tau}_{\sigma}(t) + C_0, 
\end{align*}  
so taking $C_0 = 0$
and using that
\[ \dot{\tau}_{\sigma}(t) = \sqrt{\frac{1}{d \sigma} \left( 1
      -\frac{1}{\tau_{\sigma}(t)^{d\sigma}}\right)} , \] 
 we infer
\[ s_{\sigma}(t)= \frac{1}{4(1-d\sigma)} \ln \left(  \frac{1}{d \sigma} \left( 1 -\frac{1}{\tau_{\sigma}(t)^{d\sigma}} \right) \right).  \]
In particular we see that as $t \rightarrow + \infty$, 
\[  s_{\sigma}(t) \longrightarrow -\frac{1}{4(1-d\sigma)} \ln \left(
    d \sigma  \right) =: s_{\sigma}^{\max},\] 
so this new change of variable compactifies time. On
the other hand, defining
$\widetilde{\tau}_{\sigma}(s_{\sigma}(t))=\tau_{\sigma}(t)$, we have  
\begin{equation}
  \label{eq:tau-tilde}
 \widetilde{\tau}_{\sigma}(s_{\sigma})= \left( 1 -d\sigma e^{4 s_{\sigma}(1-d\sigma)}\right)^{-\frac{1}{d\sigma}} ,
\end{equation}
and
\[  \dot{\widetilde{\tau}}_{\sigma}(s_{\sigma})=\frac{\dd}{\dd t}
  \widetilde{\tau}_{\sigma}(s_{\sigma}(t)) =\dot \tau_\si (t)= e^{2
    s_{\sigma}(1-d\sigma)}.  \] 
From this change of variable we infer a new integrable bound from
Lemma~\ref{lem:apriori-unif-si-t} on the quantity 
\[ V_{\sigma}(t) := \frac{1}{\tau_{\sigma}(t)^{2-d\sigma}} \| \nabla v_{\sigma}(t)\|_{L^2}^2,  \]
writing that
\begin{align*}
\int_0^{\infty} \frac{\dot{\tau}_{\sigma}(t)}{\tau_{\sigma}(t)} V_{\sigma}(t) \dd t & = (1-d\sigma) \int_{-\infty}^{s_{\sigma}^{\max}} \dot{\tilde{\tau}}_{\sigma}(s)^2 \tilde{\tau}_{\sigma}(s)^{d\sigma}  \widetilde{V}_{\sigma}(s) \dd s \\
& = (1-d\sigma) \int_{-\infty}^{s_{\sigma}^{\max}}  e^{4s(1-d\sigma)} \left(1-d\si e^{4s(1-d\si)}\right)^{-1} \widetilde{V}_{\sigma}(s) \dd s,
\end{align*}
with the notation $\widetilde{V}_{\sigma}(s_{\sigma})
=V_{\sigma}(t(s_{\sigma})) $, which implies the uniform bound
\begin{equation} \label{eq:integrable_estimate_s}
\int_{-\infty}^{s_{\sigma}^{\max}}  e^{4s(1-d\sigma)} \widetilde{V}_{\sigma}(s) \dd s \le \frac{C}{1-d\sigma}.
\end{equation}
Now making this change of time variable in equation \eqref{eq:rho_t} we then get the following equation on $\widetilde{\rho}_{\sigma}(s_\si(t))=\rho_{\sigma}(t)$:
\begin{equation} \label{eq:rho_s}
\partial_{s_{\sigma}} \widetilde{\rho}_{\sigma}  = \frac{1}{\sigma+1} \Delta \widetilde{\rho}_{\sigma}^{\sigma+1} + 2 \diver( y \widetilde{\rho}_{\sigma}) + \mathcal{R},
\end{equation}
where
\begin{align*}
\mathcal{R} & =   \frac{2}{1-d\sigma} e^{-4s_{\sigma}(1-d \sigma)} \left(1 - d \sigma e^{4 s_{\sigma}(1-d\sigma)} \right) \partial_{s_{\sigma}} \widetilde{\rho}_{\sigma}\\
& \quad -\frac{e^{-4 s_{\sigma}(1-d\sigma)}}{(1-d\sigma)^2} \left(1 - d \sigma e^{4 s_{\sigma}(1-d\sigma)} \right) \partial_{s_{\sigma}}^2 \widetilde{\rho}_{\sigma} \\
& \quad - \frac{1}{4} \left(1 - d \sigma e^{4 s_{\sigma}(1-d\sigma)} \right)^{\frac{2-d\sigma}{d\sigma}}  \Delta^2 \widetilde{\rho}_{\sigma}   \\
& \quad + \left(1 - d \sigma e^{4 s_{\sigma}(1-d\sigma)} \right)^{\frac{2-d\sigma}{d\sigma}} \diver \diver \tilde{\nu}_{\sigma} \\
& =:R_1+R_2+R_3+R_4,
\end{align*}
with the  notation $\tilde{\nu}_{\sigma}(s_\si(t))=\nu_{\sigma}(t)$. Note that at leading order in time, discarding the term $\mathcal{R}$, we formally get the porous medium equation with drift
\[ \partial_t f = \frac{1}{\sigma+1} \Delta f^{\sigma+1} + 2 \diver( y f) , \]
whose solutions are known to converge exponentially in time towards a
Barenblatt profile in Wasserstein distance~\cite{Otto2001}. Of course
as our time range in $s_{\sigma}$ is compact, we do not expect such
property for our system. However it is known that such Barenblatt
functions converge towards a limit Gaussian profile as $\sigma \to 0$,
see for instance \cite[Theorem 2.10]{Chauleur2022}. This motivates our
upcoming convergence result of $\widetilde{\rho}_{\sigma}(s_{\sigma})$
towards a limit Gaussian profile in Wasserstein distance as both $t
\to +\infty$ and $\sigma \to 0$. Our strategy of proof will be to look
at the nonlinear porous medium equation with drift as a perturbation
of a linear harmonic Fokker-Planck operator in the limit $\sigma \to
0$. 
 
\subsection{Harmonic Fokker-Planck operator}
We recall that the Wasserstein distance between two probability measures $\nu_1$ and $\nu_2$ is given by
\[ W_p(\nu_1,\nu_2) = \inf \enstq{\left( \int_{\R^d \times \R^d} |x-y|^p \dd \mu(x,y) \right)^{\frac{1}{p}}}{(\pi_j)_{\#} \mu = \nu_j}  \]
for any $1 \le p < \infty$, where $\mu$ varies among all probability measures on $\R^d \times \R^d$, and where $\pi_j : \R^d \times \R^d \rightarrow \R^d$ denotes the canonical projection onto the $j$-th factor (see for instance \cite{Vi03}). In particular, for all $1 \le p \le q < \infty$, we directly get that
\[ W_p(\nu_1,\nu_2) \le  W_q(\nu_1,\nu_2). \]

We now introduce the well-known Fokker-Planck operator
\[L := \Delta + 2 \diver(y \ \cdot),\] 
and recall some of its useful properties, starting from the convergence of its associated linear flow towards  the universal Gaussian profile 
\[  \Gamma(x) :=\frac{1}{\pi^{d/2}} e^{-|x|^2}.  \]
\begin{lemma}[see e.g. Theorem~11.2.1 in \cite{AmbrosioGigliSavare} or  Theorem~24.7 in \cite{Vi09}]
  \label{lem:fokker_planck_to_gaussian}
Let $\phi $ be a probability density such that 
\[ \int_{\R^d}|x|^2 \phi(x) \dd x <\infty \quad \text{and} \quad \int_{\R^d} \phi(x) \, |\ln \phi(x)| \dd x <\infty.  \]
Then for all $s \ge 0$, we have
\[ W_1(e^{sL}\phi,\Gamma) \le W_2(e^{sL}\phi,\Gamma) \le e^{-2s} W_2(\phi,\Gamma).  \]
\end{lemma}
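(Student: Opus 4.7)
The plan is to establish the $W_2$ contraction via synchronous coupling for the Ornstein--Uhlenbeck process, and then deduce $W_1 \le W_2$ by Jensen's inequality. The equation $\partial_s \rho = L\rho = \Delta \rho + 2\diver(y\rho)$ is the Fokker--Planck equation associated with the SDE $\dd X_s = -2 X_s\, \dd s + \sqrt{2}\, \dd B_s$, whose generator's dual acting on densities is precisely $L$; a direct computation ($\nabla\Gamma = -2y\Gamma$, $\Delta\Gamma = (4|y|^2 - 2d)\Gamma$, $\diver(y\Gamma) = (d - 2|y|^2)\Gamma$) shows $L\Gamma = 0$, confirming that $\Gamma$ is the unique invariant probability density.

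Given the initial density $\phi$, I would let $(X_0, Y_0)$ be an optimal $W_2$-coupling between $\phi$ and $\Gamma$ (both have finite second moment, thanks to the hypothesis on $\phi$ and the explicit form of $\Gamma$), and drive two copies of the SDE by the \emph{same} Brownian motion:
\[ \dd X_s = -2 X_s\, \dd s + \sqrt{2}\, \dd B_s, \qquad \dd Y_s = -2 Y_s\, \dd s + \sqrt{2}\, \dd B_s. \]
Subtracting kills the noise, leaving the pathwise ODE $\dd(X_s - Y_s) = -2(X_s - Y_s)\,\dd s$, so $X_s - Y_s = e^{-2s}(X_0 - Y_0)$ almost surely. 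Since the law of $(X_s, Y_s)$ is a coupling of $(e^{sL}\phi,\, e^{sL}\Gamma) = (e^{sL}\phi,\, \Gamma)$, the definition of $W_2$ yields
\[ W_2(e^{sL}\phi, \Gamma)^2 \le \mathbb{E}\,|X_s - Y_s|^2 = e^{-4s}\, \mathbb{E}\,|X_0 - Y_0|^2 = e^{-4s}\, W_2(\phi, \Gamma)^2, \]
which, after taking square roots, gives the rightmost inequality. The first inequality $W_1 \le W_2$ is general for probability measures: for any admissible coupling $\mu$, Jensen's inequality gives $\int|x-y|\,\dd\mu \le (\int|x-y|^2\,\dd\mu)^{1/2}$, and taking the infimum on both sides over couplings concludes.

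The role of the entropy and second-moment hypotheses on $\phi$ is just to guarantee, via classical well-posedness for the linear Fokker--Planck flow, that $e^{sL}\phi$ remains a probability density with finite second moment, so that the Wasserstein quantities above are finite and the coupling argument is justified. An equivalent, more intrinsic viewpoint -- favored in the cited references -- is to note that $\Gamma \propto e^{-V}$ with $V(y) = |y|^2$ satisfies $\nabla^2 V \equiv 2\,\mathrm{Id}$, so the Bakry--\'Emery curvature-dimension condition $\mathrm{CD}(2, \infty)$ holds, and the Otto--Villani--von Renesse--Sturm theorem then delivers the $W_2$-contraction with rate $e^{-2s}$ automatically. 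I do not expect any real obstacle, as the statement is classical; the one point deserving care is bookkeeping the factor $2$ in front of $\diver(y\,\cdot)$ in the definition of $L$, which is what produces the rate $2$ (and not $1$) in the exponential.
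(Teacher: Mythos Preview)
Your proof is correct. The paper does not actually prove this lemma: it is stated with a citation to the optimal-transport literature (Ambrosio--Gigli--Savar\'e, Villani) and used as a black box. Your synchronous-coupling argument for the Ornstein--Uhlenbeck SDE is a clean, self-contained route to the $W_2$-contraction with the correct rate $e^{-2s}$, and your verification that $L$ is the Kolmogorov forward operator for $\dd X_s = -2X_s\,\dd s + \sqrt{2}\,\dd B_s$ with invariant density $\Gamma$ is accurate. The cited references instead derive the contraction from the gradient-flow interpretation of the Fokker--Planck equation in Wasserstein space together with the $2$-convexity of the relative entropy (equivalently, the $\mathrm{CD}(2,\infty)$ condition you mention), which is more general but heavier; your probabilistic proof is arguably the most direct way to get exactly this statement for this specific operator.
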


\begin{lemma} \label{lem:fokker_planck_with_weights}
Let $\phi \in L^1(\R^d)$, then
\[ \| |x|^2 e^{sL} \phi \|_{L^1} \le C  ( \| \phi \|_{L^1} + \| |x|^2 \phi \|_{L^1}) . \]
\end{lemma}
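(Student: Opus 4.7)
The plan is to reduce to the case of a nonnegative initial datum via the positivity-preserving property of $e^{sL}$, then propagate the second moment by a direct ODE argument.

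First I would note that $L$ is in divergence form, $L = \Delta + 2\diver(y\,\cdot)$, so its $L^1$--$L^\infty$ dual $L^* = \Delta - 2y\cdot\nabla$ is a diffusion operator satisfying the maximum principle. Consequently the semigroup $e^{sL^*}$ preserves positivity of test functions, and by duality $e^{sL}$ preserves positivity of densities. Mass is also conserved, since
\begin{equation*}
  \frac{d}{ds}\int_{\R^d} e^{sL}\phi\,dx = \int_{\R^d} L(e^{sL}\phi)\,dx = 0,
\end{equation*}
both terms being total divergences. Writing $\phi = \phi^+ - \phi^-$, we have $e^{sL}\phi = e^{sL}\phi^+ - e^{sL}\phi^-$ with each piece nonnegative, so by the triangle inequality it suffices to prove the bound for $\phi \ge 0$.

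Next, assume first that $\phi$ is smooth, nonnegative, and rapidly decaying, so that $u(s,\cdot) := e^{sL}\phi$ is smooth and all integrations by parts are legitimate. Set $M_2(s) := \int_{\R^d} |x|^2 u(s,x)\,dx$. Differentiating and integrating by parts twice,
\begin{equation*}
  \dot M_2(s) = \int_{\R^d} |x|^2 \Delta u\,dx + 2\int_{\R^d} |x|^2 \diver(xu)\,dx
  = 2d\,\|u(s)\|_{L^1} - 4 M_2(s),
\end{equation*}
using $\Delta(|x|^2)=2d$ and $\nabla(|x|^2)\cdot x = 2|x|^2$. Since $\|u(s)\|_{L^1} = \|\phi\|_{L^1}$ by conservation of mass, this is a linear scalar ODE, and integration gives the explicit formula
\begin{equation*}
  M_2(s) = e^{-4s} M_2(0) + \tfrac{d}{2}\bigl(1-e^{-4s}\bigr)\|\phi\|_{L^1},
\end{equation*}
which is bounded by $\||x|^2\phi\|_{L^1} + \tfrac{d}{2}\|\phi\|_{L^1}$ uniformly in $s\ge 0$.

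Finally, for a general $\phi \ge 0$ with $\phi, |x|^2\phi \in L^1(\R^d)$, I would approximate $\phi$ by a sequence $(\phi_n)\subset \cont^\infty_c(\R^d)$ of nonnegative smooth compactly supported functions converging to $\phi$ in $L^1$ together with $|x|^2\phi_n \to |x|^2\phi$ in $L^1$. The previous estimate applies to each $\phi_n$, and we pass to the limit using continuity of $e^{sL}$ on $L^1$ (or Fatou's lemma on the nonnegative integrand $|x|^2 e^{sL}\phi_n$). The main potential pitfall is justifying the boundary terms in the integration by parts on general $L^1$ data, but the approximation argument bypasses this cleanly. Alternatively, one can sidestep approximation entirely by noting that the change of variables $u(s,x) = e^{2ds} w(s, e^{2s}x)$ together with the time reparametrization $\tau(s) = (e^{4s}-1)/4$ reduces the Fokker--Planck equation to the heat equation $\partial_\tau w = \Delta w$; the explicit Gaussian kernel then yields the second-moment bound by the standard computation $\int |y|^2 G_\tau(y-z)\,dy = |z|^2 + 2d\tau$.
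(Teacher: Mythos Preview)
Your proof is correct and takes a genuinely different route from the paper. The paper works directly with the explicit Mehler-type kernel
\[
K(s,x,y) = \pi^{-d/2}(1-e^{-4s})^{-d/2}\exp\!\Bigl(-\frac{|x-e^{-2s}y|^2}{1-e^{-4s}}\Bigr),
\]
splits $|x|^2 \lesssim |x-e^{-2s}y|^2 + e^{-4s}|y|^2$ under the integral sign, and changes variables in the first piece. This handles signed data $\phi$ in one stroke, since $|e^{sL}\phi(x)| \le \int K(s,x,y)|\phi(y)|\,dy$ is immediate from the positive kernel. Your approach instead reduces to $\phi\ge 0$ via positivity preservation, then derives and solves the linear ODE $\dot M_2 = 2d\|\phi\|_{L^1} - 4M_2$ for the second moment. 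This is slightly more conceptual, avoids writing down the kernel, and produces the sharp uniform constant $M_2(s)\le M_2(0)+\tfrac{d}{2}\|\phi\|_{L^1}$ rather than an unspecified $C$; on the other hand it needs the extra reduction-and-approximation layer that the kernel computation bypasses. The alternative you sketch at the end (conjugating to the heat equation) is essentially the kernel argument in a different coordinate system.
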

\begin{proof}
The kernel of the harmonic Fokker-Planck operator writes
\[ K(t,x,y) := \pi^{-\frac{d}{2}} (1-e^{-4s})^{-\frac{d}{2}} \Gamma \left( (x-e^{-2s})(1-e^{-4s})^{-\frac12}  \right),   \]
where $\Gamma(z)=\exp ( -|z|^2)$, see for instance \cite[Lemma~4.7]{Ferriere2021}, so that
\[ (e^{sL} \phi)(x) = \int_{\R^d} K(s,x,y) \phi(y) \dd y  \]
for all $x \in \R^d$. We then compute
\begin{align*}
\int_{\R^d} |x|^2 |(e^{sL} \phi) (x)| \dd x  & \lesssim \int_{\R^d} \int_{\R^d} |x-e^{-2s}y|^2 K(s,x,y) |\phi(y)| \dd y \dd x \\
& \quad +e^{-4s} \int_{\R^d} \int_{\R^d}  K(s,x,y) |y|^2 |\phi(y)| \dd y \dd x \\
& \lesssim  (1- e^{-4s}) \int_{\R^d} |\phi(y)| \int_{\R^d} \frac{|z|^2}{\pi^{d/2}} \Gamma(z)  \dd z  \dd y \\
& \quad +e^{-4s} \int_{\R^d} |e^{sL} (|x|^2 \phi(x)) | \dd x ,
\end{align*}
where we have performed the change of variable $z=(x-e^{-2s}y)(1-e^{-4s})^{-\frac12}$ in the first integral on the right hand side. The result follows, as
\[ \| |x|^2 e^{sL} \phi \|_{L^1} \le \| \phi \|_{L^1} \| |x|^2 \Gamma \|_{L^1} + \|  e^{sL} (|x|^2\phi) \|_{L^1}. \qedhere \]
\end{proof}

We also provide the following property, which allows to trade differentiation in space with exponential decay in time.
\begin{lemma}[Lemma~4.8 in \cite{Ferriere2021}] \label{lem:fokker_planck_trade}
Let $\phi \in L^1(\R^d)$, $n\in \mathbb{N}$ and $K \in \left\{1,\ldots,d\right\}^n$. Then $f(s)=e^{sL}\left(\partial_K \phi  \right)$ is a $W^{\infty,1}$ function for all $s > 0$, and for all $m \in \N$ we have
\[ f(s)=e^{-2ns} \partial_K(e^{sL}\phi) \quad \text{and} \quad \| f(s)\|_{\dot{W}^{m-n,1}} \le \frac{e^{-2ns}}{(1-e^{-4s})^{\frac{m}{2}}} \| \Gamma\|_{\dot{W}^{m,1}}  \| \phi \|_{L^1}.\]
\end{lemma}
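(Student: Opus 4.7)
The plan relies on two ingredients: an algebraic commutation identity between $L$ and spatial derivatives, together with the explicit Mehler kernel
\[
K(s,x,y)=\pi^{-d/2}(1-e^{-4s})^{-d/2}\,\Gamma\!\left(\frac{x-e^{-2s}y}{\sqrt{1-e^{-4s}}}\right)
\]
already displayed in the proof of Lemma~\ref{lem:fokker_planck_with_weights}, where $\Gamma(z)=e^{-|z|^2}$.

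First, I would compute the commutator $[L,\partial_j]$. Since $\partial_j$ commutes with $\Delta$, and since a one-line computation gives $\partial_j\diver(y\,u)=\diver(y\,\partial_j u)+\partial_j u$, one obtains $[L,\partial_j]=-2\partial_j$. Iterating, $[L,\partial_K]=-2n\,\partial_K$ for every multi-index $K$ with $|K|=n$, so $\partial_K$ is an eigenvector of $\mathrm{ad}_L$ with eigenvalue $-2n$. The standard semigroup conjugation identity then yields
\[
e^{sL}\partial_K=e^{-2ns}\,\partial_K\,e^{sL},
\]
which, applied to $\phi$, gives the first claim $f(s)=e^{-2ns}\partial_K(e^{sL}\phi)$.

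For the smoothing estimate, I would bound $\|\partial_J f(s)\|_{L^1}$ for $|J|=m-n$. By the first identity this reduces to controlling $\|\partial_{J+K}(e^{sL}\phi)\|_{L^1}$, a total of $m$ derivatives landing on the kernel $K(s,x,y)$. Each $x$-derivative differentiates $\Gamma$ at $u=(x-e^{-2s}y)/\sqrt{1-e^{-4s}}$ and produces, by the chain rule, a factor $(1-e^{-4s})^{-1/2}$; so $m$ derivatives yield $(1-e^{-4s})^{-m/2}$ times a derivative of $\Gamma$. A change of variable in $x$, followed by Fubini's theorem in $y$, then gives
\[
\|\partial_{J+K}(e^{sL}\phi)\|_{L^1}\le(1-e^{-4s})^{-m/2}\,\|\Gamma\|_{\dot W^{m,1}}\,\|\phi\|_{L^1},
\]
and reintroducing the prefactor $e^{-2ns}$ yields the announced bound. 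The $W^{\infty,1}$ regularity for $s>0$ then follows by taking $m$ arbitrarily large.

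No serious analytical obstacle arises: the two factors $e^{-2ns}$ and $(1-e^{-4s})^{-m/2}$ simply reflect the Ornstein--Uhlenbeck structure of $e^{sL}$, whose Mehler kernel is a rescaled Gaussian that supplies both the exponential eigenvalue decay and the $s\to 0^+$ smoothing singularity. The only delicate point is keeping track of the commutator $[L,\partial_j]=-2\partial_j$ and the chain-rule factor per derivative; everything else is direct manipulation of the explicit Gaussian kernel.
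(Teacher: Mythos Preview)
The paper does not give its own proof of this lemma: it is quoted verbatim from \cite{Ferriere2021} (Lemma~4.8 there) and used as a black box. So there is no in-paper argument to compare against.

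Your argument is correct and is exactly the standard one. The commutator computation $[L,\partial_j]=-2\partial_j$ is right, it yields $e^{sL}\partial_K=e^{-2ns}\partial_K e^{sL}$ by the usual $\mathrm{ad}_L$-eigenvector mechanism, and the smoothing estimate then follows from the explicit Mehler kernel by differentiating in $x$, changing variables, and applying Fubini, precisely as you describe. One small remark: as written, your bound on $\|\partial_J f(s)\|_{L^1}$ with $|J|=m-n$ tacitly assumes $m\ge n$; the statement allows any $m\in\N$, so when $m<n$ the norm $\|\cdot\|_{\dot W^{m-n,1}}$ is a negative-order (dual) norm and one should instead pair against a test function and integrate by parts $n-m$ times before invoking the kernel estimate. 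This is routine and does not affect the substance of your proof.
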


\subsection{Duhamel formula and perturbation of the porous medium equation} \label{sec:perturbation_porous_medium}
We can express Equation~\eqref{eq:rho_s} as a perturbation of the
Fokker-Planck evolution equation with source term 
\[ \partial_{s_{\sigma}} \widetilde{\rho}_{\sigma} =\frac{1}{1+\sigma}
  L \widetilde{\rho}_{\sigma} + \left(1-\frac{1}{1+\sigma} \right)
  \underbrace{\diver(2y \widetilde{\rho}_{\sigma})}_{=:R_0^{(1)}}
  +\frac{1}{1+\sigma}  \underbrace{\Delta \(
    \widetilde{\rho}_{\sigma}^{1+\sigma} -\widetilde{\rho}_{\sigma}
    \)}_{=:R_0^{(2)}} + \mathcal{R} . \] 
Let $s_{\sigma} \in \left[0,s_{\sigma}^{\max} \right]$. Denoting $R_0:=R_0^{(1)}+\si^{-1} R_0^{(2)}$, this equation writes in Duhamel form as
\begin{equation} \label{eq:rho_s_Duhamel}
\widetilde{\rho}_{\sigma}(s_{\sigma})=e^{s_{\sigma}\frac{L}{1+\sigma}} \widetilde{\rho}_{\sigma}(0) + \frac{\sigma}{1+\sigma} \int_0^{s_{\sigma}} e^{(s_{\sigma}-r)\frac{L}{1+\sigma}} R_0 \dd r + \int_0^{s_{\sigma}} e^{(s_{\sigma}-r)\frac{L}{1+\sigma}} \mathcal{R} \dd r.
\end{equation}  
Note that even if
$s_{\sigma}=0$ does not corresponds to $t=0$ back to the original time
variable, it still leads to an initial time which is bounded in
$\sigma$, hence it is harmless in the following long time analysis of
our equation.

Our strategy of proof is the following: even if $s_{\sigma}$ is
limited to a compact set $\left[0,s_{\sigma}^{\max} \right]$, we
remark that as $\sigma \rightarrow 0$, we have $s_{\sigma}^{\max}
\rightarrow +\infty$ hence $e^{s_{\sigma}L_{\sigma}}
\widetilde{\rho}_{\sigma}(0)$ should be arbitrary near the limiting
Gaussian profile at large $s_{\sigma}$ when $\sigma \rightarrow
0$. Our aim is then to prove that remainder terms are indeed
negligible, thanks to vanishing time coefficients for $\mathcal{R}$,
or in the limit $\sigma \rightarrow 0$ for $R_0$.  

The rest of this section consists essentially in showing the following
result: 
\begin{theorem}\label{theo:cv-log-W1}
  Under the assumptions of Theorem~\ref{theo:log-temps-long}, there
  exists $C>0$ independent of $\si$ such that for all $0\le s_\si\le
  s_\si^{\max}$, 
  \begin{multline*}
    \sup
    \enstq{\int_{\R^d} \varphi
      \(
      \widetilde{\rho}_{\sigma}(s_{\sigma})-e^{s_{\sigma}\frac{L}{1+\sigma}}
      \widetilde{\rho}_{\sigma}(0) \)}{ \varphi \in \cont(\R^d)
      \text{ and }  \|\varphi\|_{\rm 
      Lip}\le 1} \\
      \le C(\sigma + e^{-2 s_{\sigma}(1-d\sigma)}).
  \end{multline*}
\end{theorem}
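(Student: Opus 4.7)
The plan is to start from the Duhamel representation \eqref{eq:rho_s_Duhamel}, which expresses the difference
$$\widetilde\rho_\sigma(s_\sigma) - e^{s_\sigma L/(1+\sigma)}\widetilde\rho_\sigma(0) = \frac{\sigma}{1+\sigma}\int_0^{s_\sigma} e^{(s_\sigma-r)L/(1+\sigma)}R_0\,\mathrm d r + \int_0^{s_\sigma} e^{(s_\sigma-r)L/(1+\sigma)}\mathcal R\,\mathrm d r,$$
pair against a Lipschitz test function $\varphi$ and use duality $\int\varphi\,e^{tL/(1+\sigma)}g = \int(e^{tL^*/(1+\sigma)}\varphi)\,g$, where $L^*=\Delta-2y\cdot\nabla$ is the Ornstein–Uhlenbeck generator. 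The basic analytic tool is the Mehler formula for $e^{tL^*/(1+\sigma)}$: for $\varphi$ Lipschitz, one obtains the dual of Lemma~\ref{lem:fokker_planck_trade},
$$\|\nabla^k e^{tL^*/(1+\sigma)}\varphi\|_{L^\infty}\le C\,\frac{e^{-2kt/(1+\sigma)}}{(1-e^{-4t/(1+\sigma)})^{(k-1)/2}}\|\nabla\varphi\|_{L^\infty}, \quad k\ge 1,$$
whose singularity at $t=0$ is integrable for $k\le 2$. Each source term will then be handled by transferring derivatives onto the smoothing kernel while using the a~priori bounds of Section~\ref{sec:apriori}.

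For the $R_0$ contribution, I split $R_0=R_0^{(1)}+\sigma^{-1}R_0^{(2)}$. The drift part $R_0^{(1)}=\diver(2y\widetilde\rho_\sigma)$ is handled by one integration by parts, using the uniform second-moment bound $\|y\widetilde\rho_\sigma\|_{L^1}\le C$ provided by Lemma~\ref{lem:apriori-unif-si-t}, which yields $|\int\varphi\,e^{tL/(1+\sigma)}R_0^{(1)}|\le C e^{-2t/(1+\sigma)}\|\nabla\varphi\|_{L^\infty}$. The nonlinear part $\sigma^{-1}R_0^{(2)}=\sigma^{-1}\Delta(\widetilde\rho_\sigma^{1+\sigma}-\widetilde\rho_\sigma)$ is handled by two integrations by parts, using the crucial estimate $\|\sigma^{-1}(\widetilde\rho_\sigma^{1+\sigma}-\widetilde\rho_\sigma)\|_{L^1}\le C$ (also from Lemma~\ref{lem:apriori-unif-si-t}, after identifying $\rho_\sigma^\sigma=|v_\sigma|^{2\sigma}$). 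After integrating in $r$, both pieces are uniformly bounded, and the prefactor $\sigma/(1+\sigma)$ delivers an overall contribution of size $O(\sigma)$.

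For the remainder $\mathcal R=R_1+R_2+R_3+R_4$ I would proceed term by term. In $R_1$ and $R_2$ the derivatives $\partial_{s_\sigma}\widetilde\rho_\sigma$ and $\partial_{s_\sigma}^2\widetilde\rho_\sigma$ can be re-expressed, via the continuity equation and the momentum equation used to derive \eqref{eq:rho_t}, as divergences of quantities bounded in $L^1$ (involving $J_\sigma$ and the quantum pressure/stress tensor). Combining the resulting one- or two-derivative smoothing from the adjoint semigroup with the explicit prefactors $e^{-4s_\sigma(1-d\sigma)}(1-d\sigma e^{4s_\sigma(1-d\sigma)})$, and noting that $2/(1+\sigma)>2(1-d\sigma)$, produces a $\int_0^{s_\sigma}e^{-as-b(s_\sigma-r)}\mathrm d r$-type bound whose dominant decay is $e^{-2s_\sigma(1-d\sigma)}$. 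For $R_3$ and $R_4$, the $\Delta^2$ and $\diver\diver$ structures are handled by moving exactly two derivatives onto the test function, leading after Madelung decomposition (so that $\Delta^2\widetilde\rho_\sigma$ and $\diver\diver\tilde\nu_\sigma$ produce an $L^1$ factor proportional to $\|\nabla v_\sigma\|_{L^2}^2=\widetilde\tau_\sigma^{2-d\sigma}\widetilde V_\sigma$) to an integrand of the form
$$\widetilde V_\sigma(r)\cdot\frac{e^{-4(s_\sigma-r)/(1+\sigma)}}{\sqrt{1-e^{-4(s_\sigma-r)/(1+\sigma)}}},$$
where the prefactor $\widetilde\tau_\sigma^{-(2-d\sigma)}(r)$ has cancelled $\|\nabla v_\sigma(r)\|_{L^2}^2$. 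The integrable bound \eqref{eq:integrable_estimate_s} is then exploited through Cauchy–Schwarz to extract the decay $e^{-2s_\sigma(1-d\sigma)}$.

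The hardest part is precisely this last step: matching the time-weight $e^{4r(1-d\sigma)}$ present in the integrable control \eqref{eq:integrable_estimate_s} with the $L^\infty$ smoothing $e^{-4(s_\sigma-r)/(1+\sigma)}/\sqrt{1-e^{-4(s_\sigma-r)/(1+\sigma)}}$ produced by the adjoint OU semigroup so that the combined integral yields decay of order $e^{-2s_\sigma(1-d\sigma)}$ rather than merely $O(1)$. Since $2(1-d\sigma)$ and $2/(1+\sigma)$ are both close to $2$ for small $\sigma$, the ordering of these exponents is delicate, and the whole bookkeeping around the $R_3,R_4$ contributions is where the asserted bound $C(\sigma+e^{-2s_\sigma(1-d\sigma)})$ is tightly used.
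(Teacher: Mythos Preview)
Your overall scheme---Duhamel plus moving derivatives through the Fokker--Planck semigroup---is the same as the paper's, and your treatment of $R_0$, $R_1$, $R_4$ is essentially correct. However there is a genuine gap in your handling of $R_3=-\tfrac14\widetilde\tau_\sigma^{-(2-d\sigma)}\Delta^2\widetilde\rho_\sigma$. You propose to move two derivatives onto the test side and claim that what remains is controlled in $L^1$ by $\|\nabla v_\sigma\|_{L^2}^2$. This is true for $R_4$ (where $\|\tilde\nu_\sigma\|_{L^1}\le\|\nabla v_\sigma\|_{L^2}^2$) but not for $R_3$: removing two derivatives from $\Delta^2\widetilde\rho_\sigma$ leaves $\Delta\widetilde\rho_\sigma=2|\nabla v_\sigma|^2+2\operatorname{Re}(\bar v_\sigma\Delta v_\sigma)$, and the second term is not in $L^1$ under the available $\Sigma$ regularity. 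Moving three derivatives instead would leave $\nabla\widetilde\rho_\sigma\in L^1$, but then your own smoothing estimate has the non-integrable singularity $(1-e^{-4t/(1+\sigma)})^{-1}$ near $r=s_\sigma$, and the prefactor $\widetilde\tau_\sigma^{-(2-d\sigma)}(r)$ does not vanish there. The same obstruction resurfaces in your plan for $R_2$: expressing $\partial_{s_\sigma}^2\widetilde\rho_\sigma$ via the momentum equation simply reproduces~\eqref{eq:rho_t}, hence reintroduces the $\Delta^2\widetilde\rho_\sigma$ term.

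The paper resolves this by first regularizing and truncating the Lipschitz test function, $\varphi\mapsto\varphi_\epsilon$, so that $\|\varphi_\epsilon\|_{\dot W^{1+n,\infty}}\lesssim\epsilon^{-n}$ (Lemma~\ref{lem:regularization_trade}). One can then move all four derivatives in $R_3$ onto $\varphi_\epsilon$ at cost $\epsilon^{-3}$; the super-exponential smallness of the prefactor $(1-d\sigma e^{4r(1-d\sigma)})^{(2-d\sigma)/(d\sigma)}\lesssim e^{-\frac{3}{2}e^{2r}}$ makes the remaining time integral finite, yielding $|E_3|\lesssim\epsilon^{-3}e^{-8s_\sigma/(1+\sigma)}$. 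For $R_2$ the paper integrates by parts in time using the identity $\int_0^s e^{(s-r)L}\partial_r f\,\mathrm dr=\int_0^s Le^{(s-r)L}f\,\mathrm dr+f(s)-e^{sL}f(0)$, which converts $\partial_{s_\sigma}^2\widetilde\rho_\sigma$ back to first-order time derivatives (hence single divergences) plus boundary terms. The final bound follows by optimizing $\epsilon=e^{-2s_\sigma(1-d\sigma)}$. Without this regularization/optimization step, your argument for $R_3$ does not close.
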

Even though the supremum is reminiscent of the definition of the distance $W_1$,
we do not use this notation in the above statement, because the
densities $\tilde \rho_\si$ (or, equivalently, $\rho_\si$) are not
normalized to be probability densities, contrary to $\varrho_\si$.

\subsection{Duality and regularization} \label{sec:duality_and_reg}
In order to prove Theorem \ref{theo:cv-log-W1}, we will rely on the dual
representation of the Wasserstein distance $W_1$ recalled in the
introduction,  \eqref{eq:W1}.
Now  the test functions $\varphi$ must be regularized in
order to absorb space derivatives by integration by parts, and also
need to be truncated as they may grow  linearly at infinity. The
aim of this paragraph is to provide precise bounds on such
regularization and cut-off processes. 

Let $\varphi$ be any $1$-Lipschitz function. Let $\zeta \in \mathcal{S}(\R^d)$ such that $\zeta \ge 0$ and $\int_{\R^d}\zeta =1$, and consider the mollifier $\zeta_{\epsilon}(x)=\epsilon^{-d} \zeta(x/\epsilon)$. We can then define the regularized function 
\[ \widetilde{\varphi}_{\epsilon} := \varphi \ast \zeta_{\epsilon},\]
 which is still $1$-Lipschitz and satisfies
\[  \| \widetilde{\varphi}_{\epsilon} - \varphi \|_{L^{\infty}} \le \epsilon \| |\cdot|\zeta \|_{L^1}. \]
We now introduce the cut-off function $\chi \in \cont_c^\infty(\R^d)$ such that $\chi \equiv 1$ on $\mathcal{B}(0,1)$ and $0 \le \chi \le 1$. We then define $\chi_{\epsilon}(x)=\chi(\epsilon x)$ and 
\[ \varphi_{\epsilon} := (\widetilde{\varphi}_{\epsilon} - \widetilde{\varphi}_{\epsilon}(0)) \chi_{\epsilon} \in \cont_c(\R^d). \]
We similarly get that $\varphi_{\epsilon}$ is Lipschitz continuous,
uniformly in $\epsilon$,  and that
\[ \left\|  \frac{\widetilde{\varphi}_{\epsilon} - \widetilde{\varphi}_{\epsilon}(0) - \varphi_{\epsilon} }{|\cdot|^2} \right\|_{L^{\infty}} \le \epsilon.  \]
We now state the main property which allows to trade space derivatives into negative powers of $\epsilon$.
\begin{lemma}[Equation~4.21 in \cite{Ferriere2021}] \label{lem:regularization_trade}
Let $n\in \N$. There exists $C_n>0$ such that for all $\epsilon>0$,
\[ \| \varphi_{\epsilon} \|_{\dot{W}^{1+n,\infty}} \le C_n \epsilon^{-n}.  \]
\end{lemma}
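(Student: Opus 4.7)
The plan is a direct Leibniz-rule expansion, the whole game being to exploit the fact that $\varphi$ is $1$-Lipschitz (not merely bounded) in order to shift one derivative off the mollifier. Fix a multi-index $\alpha$ with $|\alpha|=n+1$. Writing $\varphi_\epsilon=(\widetilde\varphi_\epsilon-\widetilde\varphi_\epsilon(0))\chi_\epsilon$ and using $\partial^\beta \widetilde\varphi_\epsilon(0)\cdot \partial^{\alpha-\beta}\chi_\epsilon$ vanishing unless $\beta=0$, one has
\[
\partial^\alpha\varphi_\epsilon=(\widetilde\varphi_\epsilon-\widetilde\varphi_\epsilon(0))\,\partial^\alpha\chi_\epsilon+\sum_{0<\beta\le\alpha}\binom{\alpha}{\beta}\,\partial^\beta\widetilde\varphi_\epsilon\,\partial^{\alpha-\beta}\chi_\epsilon.
\]
The strategy is to bound each summand by $C_n \epsilon^{-n}$, and observe that for $\epsilon\in(0,1]$ this is saturated precisely by the term $\beta=\alpha$.

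For $|\beta|\ge 1$, pick any coordinate $j$ present in $\beta$, write $\beta=e_j+\beta'$, and use the convolution identity $\partial^\beta\widetilde\varphi_\epsilon=(\partial_j\varphi)*\partial^{\beta'}\zeta_\epsilon$. Young's inequality together with the mollifier scaling $\|\partial^{\beta'}\zeta_\epsilon\|_{L^1}=\epsilon^{-|\beta'|}\|\partial^{\beta'}\zeta\|_{L^1}$ yields
\[
\|\partial^\beta\widetilde\varphi_\epsilon\|_{L^\infty}\le \|\nabla\varphi\|_{L^\infty}\,\|\partial^{\beta'}\zeta_\epsilon\|_{L^1}\lesssim \epsilon^{-(|\beta|-1)}.
\]
On the cutoff side, the chain rule gives $\partial^{\alpha-\beta}\chi_\epsilon=\epsilon^{|\alpha|-|\beta|}(\partial^{\alpha-\beta}\chi)(\epsilon\,\cdot)$, so $\|\partial^{\alpha-\beta}\chi_\epsilon\|_{L^\infty}\lesssim\epsilon^{n+1-|\beta|}$. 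Multiplying, the contribution of this term is of size $\epsilon^{n+2-2|\beta|}$, which is an increasing function of $|\beta|$ in the regime $\epsilon\le 1$ and reaches its maximum $\epsilon^{-n}$ exactly at $|\beta|=n+1$.

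For the remaining term $\beta=0$, the Lipschitz property is inherited by $\widetilde\varphi_\epsilon=\varphi*\zeta_\epsilon$ (since $\zeta_\epsilon\ge 0$ and $\int\zeta_\epsilon=1$), so $|\widetilde\varphi_\epsilon(x)-\widetilde\varphi_\epsilon(0)|\le|x|$. Since $\chi_\epsilon$ is supported in a ball of radius $O(\epsilon^{-1})$, this factor is pointwise bounded by a multiple of $\epsilon^{-1}$ on the support of $\partial^\alpha\chi_\epsilon$; combined with $\|\partial^\alpha\chi_\epsilon\|_{L^\infty}\lesssim \epsilon^{n+1}$, the $\beta=0$ contribution is $O(\epsilon^n)\le O(\epsilon^{-n})$ for $\epsilon\in(0,1]$. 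Taking the supremum over $x\in\R^d$ and summing finitely many terms with constants depending only on $n$, $\zeta$ and $\chi$ delivers the claim. The case $\epsilon\ge 1$ (not needed in the applications, but included for completeness) is trivial: all derivatives of $\varphi_\epsilon$ are uniformly bounded in absolute terms by the same decomposition, and $\epsilon^{-n}\le 1$ on this range only forces us to replace $C_n$ by a larger absolute constant.

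The only non-obvious point is the Lipschitz shift $\partial^\beta(\varphi*\zeta_\epsilon)=\partial_j\varphi*\partial^{\beta'}\zeta_\epsilon$: a naive estimate placing all derivatives on $\zeta_\epsilon$ would give $\epsilon^{-|\beta|}$ and produce the wrong power $\epsilon^{-(n+1)}$. Putting one derivative on $\varphi$ (permissible because $\varphi\in W^{1,\infty}$ with norm one) is what saves exactly one power of $\epsilon$ and yields the sharp bound $\epsilon^{-n}$.
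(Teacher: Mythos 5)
The paper does not actually prove this lemma: it cites it as Equation~4.21 in \cite{Ferriere2021}, so there is no in-paper argument to compare against. Your proof of the relevant case $\epsilon\in(0,1]$ is correct, and it is the natural argument: Leibniz expansion of $\partial^\alpha\bigl[(\widetilde\varphi_\epsilon-\widetilde\varphi_\epsilon(0))\chi_\epsilon\bigr]$, the key Lipschitz shift $\partial^\beta\widetilde\varphi_\epsilon=(\partial_j\varphi)\ast\partial^{\beta'}\zeta_\epsilon$ so that only $|\beta|-1$ derivatives hit the mollifier, Young's inequality together with $\|\partial^{\beta'}\zeta_\epsilon\|_{L^1}=\epsilon^{-|\beta'|}\|\partial^{\beta'}\zeta\|_{L^1}$ and $\|\partial^\gamma\chi_\epsilon\|_{L^\infty}=\epsilon^{|\gamma|}\|\partial^\gamma\chi\|_{L^\infty}$, plus the $|\widetilde\varphi_\epsilon(x)-\widetilde\varphi_\epsilon(0)|\le|x|\lesssim\epsilon^{-1}$ bound on the support for $\beta=0$; the exponent $\epsilon^{n+2-2|\beta|}$ is maximized at $\beta=\alpha$ when $\epsilon\le1$, giving exactly $\epsilon^{-n}$.

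The only flaw is your parenthetical claim for $\epsilon\ge1$, namely that ``all derivatives of $\varphi_\epsilon$ are uniformly bounded in absolute terms.'' This is false for $n\ge1$: your own count shows the $|\beta|=1$ term contributes $\epsilon^{n+2-2}=\epsilon^{n}$, which is unbounded as $\epsilon\to\infty$. Concretely, take $\varphi(x)=x_1$, so $\widetilde\varphi_\epsilon(x)-\widetilde\varphi_\epsilon(0)=x_1$ and $\varphi_\epsilon(x)=x_1\chi(\epsilon x)$; then
\[
\partial_1^{\,n+1}\varphi_\epsilon(x)=(n+1)\,\epsilon^{n}(\partial_1^{n}\chi)(\epsilon x)+\epsilon^{n+1}x_1(\partial_1^{n+1}\chi)(\epsilon x),
\]
whose $L^\infty$ norm is of size $\epsilon^{n}$, not $O(1)$ and certainly not $O(\epsilon^{-n})$. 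So the lemma as stated holds only on $\epsilon\in(0,1]$; since the paper always takes $\epsilon=e^{-2s_\sigma(1-d\sigma)}\le1$, this does not affect the application, but you should delete the ``trivial'' remark about $\epsilon\ge1$ rather than assert it.
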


\subsection{Proof of Theorem~\ref{theo:cv-log-W1}}

 Let $\varphi$ be a  $1$-Lipschitz function. Following the previous discusssion, we
first introduce the decomposition
\begin{equation} \label{eq:rho_reg_cut_off}
\begin{aligned}
\int_{\R^d} \varphi (\tilde{\rho}_{\sigma}(s_{\sigma})-e^{s_{\sigma}\frac{L}{1+\sigma}} \widetilde{\rho}_{\sigma}(0)) = & \int_{\R^d} (\varphi-\widetilde{\varphi}_{\epsilon}) (\tilde{\rho}_{\sigma}(s_{\sigma})-e^{s_{\sigma}\frac{L}{1+\sigma}} \widetilde{\rho}_{\sigma}(0)) \\
& + \int_{\R^d} (\widetilde{\varphi}_{\epsilon} -  \varphi_{\epsilon})  (\tilde{\rho}_{\sigma}(s_{\sigma})-e^{s_{\sigma}\frac{L}{1+\sigma}} \widetilde{\rho}_{\sigma}(0)) \\
& + \int_{\R^d} \varphi_{\epsilon} (\tilde{\rho}_{\sigma}(s_{\sigma})-e^{s_{\sigma}\frac{L}{1+\sigma}} \widetilde{\rho}_{\sigma}(0)).
\end{aligned}
\end{equation}   
To bound the first term, we make use of the previous estimates from Section \ref{sec:duality_and_reg}, so that
\begin{multline*}   \left| \int_{\R^d} (\varphi-\widetilde{\varphi}_{\epsilon}) (\tilde{\rho}_{\sigma}(s_{\sigma})-e^{s_{\sigma}\frac{L}{1+\sigma}} \widetilde{\rho}_{\sigma}(0)) \right| \\
\le \| \widetilde{\varphi}_{\epsilon} - \varphi \|_{L^{\infty}} \left(  \| \tilde{\rho}_{\sigma} \|_{L^1} + \| e^{s_{\sigma}\frac{L}{1+\sigma}} \widetilde{\rho}_{\sigma}(0) \|_{L^1} \right) \lesssim \epsilon. 
\end{multline*}  
Similarly we estimate the second term, recalling that
$\int_{\R^d}\tilde{\rho}_{\sigma}(s_{\sigma}) = \int_{\R^d}
e^{s_{\sigma}\frac{L}{1+\sigma}} \widetilde{\rho}_{\sigma}(0)$ by
the conservation of mass, and writing
\begin{multline*}
 \left|  \int_{\R^d} ( \widetilde{\varphi}_{\epsilon}-\varphi_{\epsilon}) (\tilde{\rho}_{\sigma}(s_{\sigma})-e^{s_{\sigma}\frac{L}{1+\sigma}} \widetilde{\rho}_{\sigma}(0)) \right| \\ 
 \le \left|  \int_{\R^d} ( \widetilde{\varphi}_{\epsilon}- \widetilde{\varphi}_{\epsilon}(0)-\varphi_{\epsilon}) (\tilde{\rho}_{\sigma}(s_{\sigma})-e^{s_{\sigma}\frac{L}{1+\sigma}} \widetilde{\rho}_{\sigma}(0)) \right| \\
  + \left| \widetilde{\varphi}_{\epsilon}(0) \int_{\R^d} \left(\tilde{\rho}_{\sigma}(s_{\sigma}) - e^{s_{\sigma}\frac{L}{1+\sigma}} \widetilde{\rho}_{\sigma}(0)  \right)  \right|    \\
 \le \left\| \frac{\widetilde{\varphi}_{\epsilon}-\widetilde{\varphi}_{\epsilon}(0)-\varphi_{\epsilon}}{1+|y|^2} \right\|_{L^{\infty}} \left( \| |y|^2 \tilde{\rho}_{\sigma} \|_{L^1} + \| |y|^2 e^{s_{\sigma}\frac{L}{1+\sigma}} \widetilde{\rho}_{\sigma}(0) \|_{L^1}  \right) \lesssim \epsilon.
 \end{multline*}
In the previous estimate, we used that, from
Lemma~\ref{lem:apriori-unif-si-t}, $\| |y|^2\tilde{\rho}_{\sigma}
\|_{L^1}$ is 
uniformly bounded with respect to $\sigma$ and $s_{\sigma}$, as well
as Lemma~\ref{lem:fokker_planck_with_weights} in order to estimate $\| |y|^2
e^{s_{\sigma}\frac{L}{1+\sigma}} \widetilde{\rho}_{\sigma}(0)
\|_{L^1}$. We now 
turn our attention to the second term of 
\eqref{eq:rho_reg_cut_off}, using the Duhamel formulation from
equation \eqref{eq:rho_s_Duhamel} to express
$\tilde{\rho}_{\sigma}(s_{\sigma})$, and we control each error term.

 The first error term corresponds to
 \begin{align*}
  E_0  & = E_0^{(1)}+ E_0^{(2)} \\
  &=\frac{\sigma}{1+\sigma}\int_{\R^d} \varphi_{\epsilon}  \int_0^{s_{\sigma}} e^{\frac{s_{\sigma}-r}{1+\sigma} L}R_0^{(1)} \dd r  + \frac{1}{1+\sigma}\int_{\R^d} \varphi_{\epsilon}  \int_0^{s_{\sigma}} e^{\frac{s_{\sigma}-r}{1+\sigma} L} R_0^{(2)} \dd r . 
  \end{align*}
  We easily estimate 
  \begin{align*}
  E_0^{(1)} & = \frac{\sigma}{1+\sigma} \int_{\R^d} \varphi_{\epsilon}  \int_0^{s_{\sigma}} e^{\frac{s_{\sigma}-r}{1+\sigma} L} \diver(2y \widetilde{\rho}_{\sigma} ) \dd r  \\
  & = \frac{2 \sigma}{1+\sigma} \int_0^{s_{\sigma}}
    e^{-2\frac{s_{\sigma}-r}{1+\sigma}} \int_{\R^d} 
   \nabla\varphi_{\epsilon} \cdot e^{\frac{s_{\sigma}-r}{1+\sigma} L} (y \widetilde{\rho}_{\sigma})  \dd r, 
  \end{align*}
  where we have used Lemma~\ref{lem:fokker_planck_trade} and an
  integration by parts. From Lemma~\ref{lem:apriori-unif-si-t}, we
  then infer the uniform bound 
  \[ \left\lvert  E_0^{(1)} \right\rvert \lesssim \sigma \| \nabla
    \varphi_{\epsilon} \|_{L^{\infty}} \int_0^{s_{\sigma}}
    e^{-2\frac{s_{\sigma}-r}{1+\sigma}} \| y
    \widetilde{\rho}_{\sigma}(r) \|_{L^1} \dd r \lesssim  \sigma.\]
 We similarly write, using Lemma~\ref{lem:fokker_planck_trade} again,
 \begin{align*}
E_0^{(2)} & = \frac{1}{1+\sigma}\int_{\R^d} \varphi_{\epsilon}  \int_0^{s_{\sigma}} e^{\frac{s_{\sigma}-r}{1+\sigma} L} R_0^{(2)} \dd r  \\
 & = \frac{1}{1+\sigma} \int_{\R^d} \varphi_{\epsilon}
   \int_0^{s_{\sigma}} e^{\frac{s_{\sigma}-r}{1+\sigma} L}\Delta \(
 \widetilde{\rho}_{\sigma}^{1+\sigma} -\widetilde{\rho}_{\sigma}  \)\dd r
   \\ 
& = \frac{1}{1+\sigma} \int_{\R^d} \varphi_{\epsilon}
  \int_0^{s_{\sigma}} e^{-\frac{4}{1+\sigma}(s_{\sigma}-r)} \Delta
  e^{\frac{s_{\sigma}-r}{1+\sigma} L} \(
  \widetilde{\rho}_{\sigma}^{1+\sigma} -\widetilde{\rho}_{\sigma}  \)\dd r  \\
 &  = \frac{1}{1+\sigma} \int_0^{s_{\sigma}}
   e^{-\frac{4}{1+\sigma}(s_{\sigma}-r)} \int_{\R^d} \nabla
   \varphi_{\epsilon} \cdot \nabla e^{\frac{s_{\sigma}-r}{1+\sigma} L} \(
   \widetilde{\rho}_{\sigma}^{1+\sigma} -\widetilde{\rho}_{\sigma}   \)\dd r.
\end{align*}   
In view of Lemma~\ref{lem:apriori-unif-si-t}, there exists $C>0$
independent of $\si\in (0,\epsilon)$ and $t\ge 0$ such that
\[ \sup_{0\le s_\si\le s_{\sigma}^{\max}  } \left\|
    \widetilde{\rho}_{\sigma}(s_\si)^{1+\sigma}
    -\widetilde{\rho}_{\sigma}(s_\si)
  \right\|_{L^1} \le C \si . \]
  From H\"older inequality, Lemma~\ref{lem:fokker_planck_trade} with
 $m=1$, we infer 
\[ \left\lvert E_0^{(2)}\right\rvert \le \frac{1}{1+\sigma}
  \int_0^{s_{\sigma}} \frac{e^{-\frac{4}{1+\sigma}(s_{\sigma}-r)}}{\(1
    - e^{-\frac{4}{1+\sigma}(s_{\sigma}-r)}\)^{1/2}}  \| \nabla
  \varphi_{\epsilon} \|_{L^{\infty}} \|\widetilde{\rho}_{\sigma}^{1+\sigma}
    -\widetilde{\rho}_{\sigma}
  \|_{L^1}  \dd r \le C
  \sigma, \]  
thanks to Lemma~\ref{lem:regularization_trade}. 
\bigbreak

The second error term, associated to $R_1$ defined in equation
\eqref{eq:rho_s}, writes 
\begin{align*}
 E_1 & = \frac{2}{1-d\sigma} \int_{\R^d} \varphi_{\epsilon}  \int_0^{s_{\sigma}}   e^{\frac{s_{\sigma}-r}{1+\sigma} L} e^{-4r(1-d\sigma)} \left( 1-d\sigma e^{4r(1-d\sigma)}\right) \partial_{s_{\sigma}} \tilde{\rho}_{\sigma}(r) \dd r  \\
 & = \frac{2}{1-d\sigma} \int_0^{s_{\sigma}}  e^{-4r(1-d\sigma)} \left( 1-d\sigma e^{4r(1-d\sigma)}\right) \int_{\R^d} \varphi_{\epsilon} e^{\frac{s_{\sigma}-r}{1+\sigma} L} \left( \partial_{s_{\sigma}}\tilde{\rho}_{\sigma}(r) \right) \dd r.
 \end{align*}
 To get further estimates on $\partial_{s_{\sigma}} \tilde{\rho}_{\sigma}$, we recall that in the time variable $t$ we have
\[  \partial_t \rho_{\sigma}(t)+\frac{1}{\tau_{\sigma}(t)^2} \diver
  J_{\sigma}(t)= \partial_t
  \rho_{\sigma}(t)+\frac{1}{\tau_{\sigma}(t)^{1+\frac{d\sigma}{2}}}
  \diver \mu_{\sigma}(t)= 0, \]
with
\[ \mu_{\sigma}(t):= \frac{1}{\tau_{\sigma}(t)^{1-\frac{d\sigma}{2}}} \IM(\overline{v_{\sigma}}(t) \nabla v_{\sigma}(t) ),  \]
which satisfies a uniform $L^1$-estimate from Cauchy-Schwarz
inequality, as 
\begin{equation} \label{eq:bound_mu_sigma}
\| \mu_{\sigma}(t) \|_{L^1}\lesssim
\frac{1}{\tau_{\sigma}(t)^{1-\frac{d\sigma}{2}}} \| \nabla
v_{\sigma}(t)\|_{L^2} \| v_{\sigma} (t) \|_{L^2} = \| \phi
\|_{L^2} \sqrt{V_{\sigma}(t)}. 
\end{equation}
 Moreover, we recall that $V_\sigma$ satisfies a uniform (both in
 $\sigma$ and $t$) bound and a uniform integrability property, from
 Lemma~\ref{lem:apriori-unif-si-t}. 
 Hence we end up, in the $s_{\sigma}$ time variable, with
\begin{align*}
\partial_{s_{\sigma}}\tilde{\rho}_{\sigma} & =(1-d \sigma)\dot{\tau}_{\sigma}(t(s_{\sigma})) \tau_{\sigma}(t(s_{\sigma}))^{1+d\sigma} \partial_t\rho_{\sigma}(t(s_{\sigma})) \\
& =-(1-d \sigma) \dot{\tau}_{\sigma}(t(s_{\sigma})) \tau_{\sigma}(t(s_{\sigma}))^{\frac{d\sigma}{2}} \diver \mu_{\sigma}(t(s_{\sigma})) \\
& = -(1-d \sigma) e^{2s_{\sigma}(1-d\sigma)} \left(1-d\sigma e^{4s_{\sigma}(1-d \sigma)}  \right)^{-1/2} \diver \mu_{\sigma}(t(s_{\sigma})),
\end{align*}
so using properties of the Fokker-Planck flow from
Lemma~\ref{lem:fokker_planck_trade}, and integrating by parts, we get 
\begin{align*}
  E_1  & = -2\int_0^{s_{\sigma}}  e^{-2r(1-d\sigma)} \left(
         1-d\sigma e^{4r(1-d\sigma)}\right)^{1/2}
         \int_{\R^d} \varphi_{\epsilon} e^{\frac{s_{\sigma}-r}{1+\sigma}
         L} \diver \mu_{\sigma}(r) \, \dd r \\ 
  & =  2\int_0^{s_{\sigma}}  e^{-2r(1-d\sigma)}
    e^{-\frac{2}{1+\sigma}(s_{\sigma}-r)} \left( 1-d\sigma
    e^{4r(1-d\sigma)}\right)^{1/2} \int_{\R^d} \nabla
    \varphi_{\epsilon} \cdot\(e^{\frac{s_{\sigma}-r}{1+\sigma} L}
    \mu_{\sigma}(r)\) \dd r.   
 \end{align*}
 By taking the absolute value and using \eqref{eq:bound_mu_sigma} and
 Lemma~\ref{lem:regularization_trade}, we infer 
 \begin{align*} 
 |E_1| & \le 2 \| \nabla \varphi_{\epsilon} \|_{L^{\infty}} \|
         \phi \|_{L^2}  \int_0^{s_{\sigma}}
         e^{-2r(1-d\sigma)} e^{-\frac{2}{1+\sigma}(s_{\sigma}-r)}
         \left(1-\frac{d\sigma}{4} e^{4s(1-d\sigma)}
         \right)^{1/2} \sqrt{\widetilde{V}_{\sigma} (r)} \,
         \dd r   \\ 
 & \lesssim \left( \int_0^{s_{\sigma}} e^{-8r(1-d\sigma)}
   e^{-\frac{4}{1+\sigma}(s_{\sigma}-r)} \dd r  \right)^{1/2}
   \left( \int_0^{s_{\sigma}} e^{4r(1-d\sigma)} \widetilde{V}_{\sigma}
   (r) \dd r  \right)^{1/2} \\ 
 & \lesssim \frac{\mathcal{E} (0)}{1 - d \sigma} \left(
   \int_0^{s_{\sigma}} e^{-8r(1-d\sigma)}
   e^{-\frac{4}{1+\sigma}(s_{\sigma}-r)} \dd r  \right)^{1/2} ,
 \end{align*}
 from Cauchy-Schwarz inequality and using the uniform bound from
 equation \eqref{eq:integrable_estimate_s}. Since we assume
 $\sigma<1/(2d)$,  $2-2d\si-\frac{1}{1+\si}\ge  \frac{1}{2d+1}>0$, we
 then get the decreasing in time estimate 
 \begin{equation} \label{eq:E_1_estimate}
 	|E_1|  \lesssim \left( \int_0^{s_{\sigma}} e^{-4r
            \left(2-2d\sigma-\frac{1}{1+\sigma} \right)}
          e^{-\frac{4}{1+\sigma}s_{\sigma}} \dd r  \right)^{1/2}
        \lesssim  e^{-\frac{2}{1+\sigma}s_{\sigma}}.  
 \end{equation}
We now turn to the fourth error term. We directly write  
 \begin{align*}
 E_3 & = -\frac{1}{4} \int_{\R^d} \varphi_{\epsilon}
       \int_0^{s_{\sigma}} e^{-\frac{s_{\sigma}-r}{1+\sigma}
       L} \left(1 - d\sigma e^{4r(1-d\sigma)}
       \right)^{\frac{2-d\sigma}{d\sigma}} \Delta^2
       \tilde{\rho}_{\sigma}(r) \dd r \\ 
 & = -\frac{1}{4} \int_0^{s_{\sigma}}
   e^{-\frac{8}{1+\sigma}(s_{\sigma}-r)} \left(1 - d\sigma
   e^{4r(1-d\sigma)} \right)^{\frac{2-d\sigma}{d\sigma}} \int_{\R^d}
   \Delta^2 \varphi_{\epsilon} e^{-\frac{s_{\sigma}-r}{1+\sigma} L}
   \tilde{\rho}_{\sigma}(r) \dd r ,
 \end{align*}
 so by H\"older inequality, and from
 Lemma~\ref{lem:regularization_trade},  we infer
 \begin{align*}
 |E_3| & \le \frac{1}{4} \| \Delta^2 \varphi_{\epsilon} \|_{L^{\infty}} \| \tilde{\rho}_{\sigma}(0) \|_{L^1} \int_0^{s_{\sigma}}  e^{-\frac{8}{1+\sigma}(s_{\sigma}-r)} \exp \left(\frac{2-d\sigma}{d\sigma} \ln \left(1 - d\sigma e^{4r(1-d\sigma)} \right)\right) \dd r  \\
 & \lesssim \frac{1}{\epsilon^3} e^{-\frac{8}{1+\sigma}s_{\sigma}} \int_0^{\infty} e^{\frac{8r}{1+\sigma}} e^{-\frac{3}{2}e^{2r}} \dd r \\
 & \lesssim \frac{1}{\epsilon^3} e^{-\frac{8}{1+\sigma}s_{\sigma}}.
 \end{align*}
We similarly estimate the fifth error term. We first write 
 \begin{align*}
 E_4 & = \int_{\R^d} \varphi_{\epsilon}  \int_0^{s_{\sigma}} \left(1 - d\sigma e^{4r(1-d\sigma)} \right)^{\frac{2-d\sigma}{d\sigma}}  e^{-\frac{s_{\sigma}-r}{1+\sigma} L}  \diver \diver \tilde{\nu}_{\sigma}(r) \dd r \\
 & = \int_0^{s_{\sigma}} \left(1 - d\sigma e^{4r(1-d\sigma)} \right)^{\frac{2-d\sigma}{d\sigma}}  e^{-\frac{4}{1+\sigma}(s_{\sigma}-r)} \int_{\R^d} \nabla^2 \varphi_{\epsilon} \cdot e^{-\frac{s_{\sigma}-r}{1+\sigma} L} \tilde{\nu}_{\sigma}(r) \dd r,
 \end{align*}
 where ``$\cdot$'' denotes here the Frobenius scalar product between
 matrices. 
 In view of the inequality  $\| \tilde{\nu}_{\sigma} \|_{L^1} \le \| \nabla v_{\sigma} \|_{L^2}^2$,  applying \eqref{eq:integrable_estimate_s}, we have
 \begin{align*}
 |E_4| & \le \|\nabla^2 \varphi_{\epsilon} \|_{L^{\infty}}  \int_0^{s_{\sigma}}  e^{-\frac{4}{1+\sigma}(s_{\sigma}-r)} \widetilde{V}_{\sigma}(r) \dd r  \\
 & \lesssim \frac{1}{\epsilon} e^{-4s_{\sigma}(1-d\si)} \int_0^{s_{\sigma}} \underbrace{e^{-4\left(\frac{1}{1+\sigma}-1+d\sigma  \right)(s_{\sigma}-r)}}_{\lesssim 1} e^{4r(1-d\si)} \widetilde{V}_{\sigma}(r) \dd r \\
 & \lesssim \frac{1}{\epsilon} e^{-4s_{\sigma}(1-d\si)} \frac{C}{1-d\si}.
 \end{align*}
 We finally estimate the third error term
 \[  E_2= \int_{\R^d} \varphi_{\epsilon}  \int_0^{s_{\sigma}}  e^{-\frac{s_{\sigma}-r}{1+\sigma} L} R_2 (r) \dd r ,\]
 which is the most delicate to treat due to the presence of the double
 time derivative: 
\begin{equation*}
R_2 (s_{\sigma}) = - \frac{e^{-4 s_{\sigma}(1-d\sigma)}}{(1-d\sigma)^2} \left(1 - d \sigma e^{4 s_{\sigma}(1-d\sigma)} \right) \partial_{s_{\sigma}}^2 \widetilde{\rho}_{\sigma}.
\end{equation*} 
 Writing 
 \begin{align*}
 R_2 (s_{\sigma}) & =- \partial_{s} \left( \frac{e^{-4 s_{\sigma} (1-d\sigma)}}{(1-d\sigma)^2} \left(1 - d \sigma e^{4 s_{\sigma} (1-d\sigma)} \right) \partial_{s} \widetilde{\rho}_{\sigma} \right) -\frac{4}{1-d\si} e^{-4s_{\sigma}(1-d\si)} \partial_{s}  \widetilde{\rho}_{\sigma}(s_{\sigma}) \\
 & =: R_2^{(1)}+ R_2^{(2)},
 \end{align*} 
 we treat each term separately. We begin with 
 \[ E_2^{(1)}   =  \frac{1}{(1-d\si)^2} \int_{\R^d} \varphi_{\epsilon}  \int_0^{s_{\sigma}} e^{-\frac{s_{\sigma}-r}{1+\sigma} L} \partial_s \left(e^{-4 r(1-d\sigma)} \left(1 - d \sigma e^{4 r(1-d\sigma)} \right) \partial_{s}  \widetilde{\rho}_{\sigma}(r)  \right) \dd r.\]
We will use a result which is a direct consequence of \cite[Corollary 4.10]{Ferriere2021}.
\begin{lemma}
Let $T>0$ and $f \in L^{\infty}((0,T);W^{1,1}(\R^d)) \cap
\cont((0,T);L^1(\R^d))$. For all $s \in (0,T)$, we have
\[ \int_0^s e^{(s-r)L} \partial_s f(r) \dd r =  \int_0^s L e^{(s-r)L} f(r) \dd r + f(s) - e^{sL}f(0). \]
\end{lemma}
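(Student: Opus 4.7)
The identity is a formal integration by parts in time: differentiating the product $r\mapsto e^{(s-r)L}f(r)$ yields
\[
\partial_r \bigl(e^{(s-r)L}f(r)\bigr) = -L\,e^{(s-r)L}f(r) + e^{(s-r)L}\partial_r f(r),
\]
and integrating from $0$ to $s$ gives $f(s)-e^{sL}f(0)$ on the left. My plan is to justify this identity, and my main work is to make the time derivative meaningful in the given regularity class.

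The cleanest route is by duality. Fix $\varphi\in \cont_c^\infty(\R^d)$ and set $g(r):=e^{(s-r)L^*}\varphi$, where $L^*=\Delta-2y\cdot\nabla$ is the formal adjoint of $L$. Since $\varphi$ is smooth and compactly supported, $g$ is a classical $\cont^1$ solution of $\partial_r g=-L^* g$ with $g(s)=\varphi$ and $g(0)=e^{sL^*}\varphi$, and $g$, $L^* g$ are bounded in $x$. The plan is then to establish
\[
\int_0^s\!\!\int_{\R^d} g(r)\,\partial_r f(r)\,\dd x\,\dd r
= \int_{\R^d}\varphi f(s)\,\dd x - \int_{\R^d}\varphi\,e^{sL}f(0)\,\dd x + \int_0^s\!\!\int_{\R^d} \varphi\, L e^{(s-r)L}f(r)\,\dd x\,\dd r,
\]
by integrating by parts in $r$ (the last term comes from $\int (L^* g)f = \int \varphi\, L e^{(s-r)L}f$ using that $e^{(s-r)L^*}$ is the adjoint of $e^{(s-r)L}$ and commutes with $L^*$). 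Since this holds for every $\varphi\in \cont_c^\infty$, the desired identity follows in the sense of distributions, and then as a genuine $L^1$ identity by the bounds supplied below.

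The technical step is justifying the integration by parts in $r$ under the stated regularity $f\in L^\infty((0,T);W^{1,1})\cap \cont((0,T);L^1)$, since $\partial_r f$ is only defined as a distribution in $r$. The plan is to mollify in time: let $f_\eta = f *_r \eta$ with a smooth approximation of the identity in the $r$ variable, extended suitably near the endpoints. Each $f_\eta$ is smooth in $r$ with values in $L^1$, the classical computation
\[
\partial_r\bigl(e^{(s-r)L}f_\eta(r)\bigr)= -L e^{(s-r)L}f_\eta(r) + e^{(s-r)L}\partial_r f_\eta(r)
\]
holds pointwise in $r$ (with values in distributions in $x$), and integrating over $r\in[0,s]$ gives the identity for $f_\eta$. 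To pass to the limit $\eta\to 0$, I use the continuity $f\in \cont((0,T);L^1)$ to control the boundary terms $f_\eta(s)$ and $e^{sL}f_\eta(0)$, and the smoothing property of $e^{(s-r)L}$ provided by Lemma~\ref{lem:fokker_planck_trade} (with $n=1$, $m=1$) to handle $\int_0^s L e^{(s-r)L}f_\eta\,\dd r$: this quantity is controlled in $L^1$ by $\|f_\eta\|_{L^\infty_r L^1_x}$, so the spatial regularity $W^{1,1}$ is actually not required in the limiting argument, but it guarantees that $\int_0^s e^{(s-r)L}\partial_r f_\eta\,\dd r$ is well defined in $L^1$ uniformly in $\eta$.

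The only potential obstacle is the behavior of $L e^{(s-r)L}f$ at $r=s$: by Lemma~\ref{lem:fokker_planck_trade} the bound degenerates like $(1-e^{-4(s-r)/(1+\sigma)})^{-1/2}$, which is integrable in $r$, so the time integral converges and the passage to the limit is legitimate. Alternatively (and perhaps more simply), one can invoke directly the cited Corollary~4.10 of \cite{Ferriere2021}, which establishes exactly this type of Duhamel-type identity for $L$-semigroups acting on distributions whose time derivative exists in a weak sense, and reduce the present lemma to that statement by checking its hypotheses, which are the regularity assumptions on $f$.
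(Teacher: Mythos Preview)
Your approach is sound and in fact goes further than the paper, which does not prove this lemma at all but simply imports it as ``a direct consequence of \cite[Corollary~4.10]{Ferriere2021}''. The formal integration by parts in $r$ is exactly the right idea, and making it rigorous by mollifying in time (or equivalently by testing against $e^{(s-r)L^*}\varphi$ for $\varphi\in\cont_c^\infty$) is the standard route.

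There is one point where your explanation is backwards, though. You write that the $W^{1,1}$ hypothesis ``is actually not required in the limiting argument'' for $\int_0^s L e^{(s-r)L}f_\eta\,\dd r$, and that instead it serves to control the left-hand side uniformly in $\eta$. This is the wrong way around. The operator $L=\Delta+2\diver(y\,\cdot)$ costs two spatial derivatives, and if $f$ is merely in $L^1$ then Lemma~\ref{lem:fokker_planck_trade} (with $n=0$, $m=2$) gives only
\[
\|L e^{(s-r)L}f\|_{L^1}\lesssim \frac{1}{1-e^{-4(s-r)}}\|f\|_{L^1}\sim \frac{1}{s-r}\quad\text{as }r\to s,
\]
which is \emph{not} integrable. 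It is precisely the $W^{1,1}$ assumption that lets you shift one derivative onto $f$ (via $\partial_k e^{sL}\phi=e^{2s}e^{sL}\partial_k\phi$) and reduce the singularity to $(1-e^{-4(s-r)})^{-1/2}$, which is integrable. Once the right-hand side is thus uniformly bounded in $L^1$, the uniform $L^1$ bound on the left-hand side follows from the identity itself, not from any direct spatial control of $\partial_r f_\eta$. With this correction your sketch is complete.
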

Such a result allows to rewrite $E_2^{(1)}$ as a sum of four error
terms, as $L$ is the sum of two operators,
\begin{align*}
E_2^{(1)} & =  -\frac{1}{(1-d\si)^2} \int_0^{s_{\sigma}} e^{-4 r(1-d\sigma)} \left(1 - d \sigma e^{4 r(1-d\sigma)} \right) \int_{\R^d} \varphi_{\epsilon}   e^{-\frac{s_{\sigma}-r}{1+\sigma} L}  \Delta \partial_{s}  \widetilde{\rho}_{\sigma}(r)   \dd r \\
& \quad - \frac{1}{(1-d\si)^2} \int_0^{s_{\sigma}} e^{-4 r(1-d\sigma)} \left(1 - d \sigma e^{4 r(1-d\sigma)} \right) \int_{\R^d} \varphi_{\epsilon}   e^{-\frac{s_{\sigma}-r}{1+\sigma} L} \diver(2y \partial_{s}  \widetilde{\rho}_{\sigma}(r) )  \dd r \\
& \quad - \frac{1}{(1-d\si)^2} e^{-4 s_{\sigma}(1-d\sigma)} \left(1 - d \sigma e^{4 s_{\sigma}(1-d\sigma)} \right) \int_{\R^d} \varphi_{\epsilon} \partial_{s}  \widetilde{\rho}_{\sigma}(s_{\sigma}) \\
& \quad - \frac{1}{(1-d\si)^2} \left(1 - d \sigma \right) \int_{\R^d} \varphi_{\epsilon} e^{\frac{s_{\sigma}}{1+\sigma} L} \partial_{s} \widetilde{\rho}_{\sigma}(0) \\
& =:  E_2^{(1,1)}+ E_2^{(1,2)}+ E_2^{(1,3)}+ E_2^{(1,4)}.
\end{align*}
To estimate $E_2^{(1,1)}$ we recall that
\begin{equation}
  \label{eq:d_s-tilde-rho}
  \partial_s   \widetilde{\rho}_{\sigma}(r)=-(1-d\sigma)
  e^{2r(1-d\si)}  \left(1 - d \sigma e^{4 r(1-d\sigma)}
  \right)^{-1/2} \diver \mu_{\si},  
\end{equation}
which implies
\begin{equation*}
  E_2^{(1,1)} =  \frac{e^{-6\frac{s_{\si}}{1+\si}}}{1-d\si} \int_0^{s_{\sigma}}  e^{r\left[\frac{6}{1+\si}-2
  (1-d\sigma)\right]} \sqrt{1 - d \sigma e^{4 r(1-d\sigma)}}
     \int_{\R^d} \nabla
  \varphi_{\epsilon}   \cdot \Delta e^{-\frac{s_{\sigma}-r}{1+\sigma} L}
  \mu_{\sigma} (r)   \dd r  ,
\end{equation*}
from Lemma~\ref{lem:fokker_planck_trade}. 
Since the square root term in $E_2^{(1,1)}$ is controlled by one,
\begin{equation} \label{eq:bound_E_2^11}
  \left\lvert E_2^{(1,1)} \right\rvert
\le \frac{e^{-6\frac{s_{\si}}{1+\si}}}{(1-d\si)^2} 
  \int_0^{s_{\sigma}} e^{r \left[ \frac{6}{1+\si} -2
    (1-d\sigma) \right]} \left| \int_{\R^d} \nabla
\varphi_{\epsilon}   \cdot \Delta
 \(e^{-\frac{s_{\sigma}-r}{1+\sigma} L} \mu_{\sigma} (r)\) \right|  \dd r. 
\end{equation}
We then remark that, for any $r \in (0, s_{\si})$, Lemma~\ref{lem:fokker_planck_trade} with $m=1$ leads to
\begin{equation} \label{eq:first_bound_E_2^11}
\begin{aligned}
	\left| \int_{\R^d} \nabla \varphi_{\epsilon}   \cdot \Delta
  \(e^{-\frac{s_{\sigma}-r}{1+\sigma} L} \mu_{\sigma} (r)\) \right|
  &\le \| \nabla \varphi_{\epsilon} \|_{L^\infty}
    \left\| \Delta \(e^{-\frac{s_{\sigma}-r}{1+\sigma} L} \mu_{\sigma}
    (r)\)\right\|_{L^1} \\ 
&\lesssim \frac{1}{1 - e^{- 4 \frac{s_{\sigma}-r}{1+\sigma}}} \|
  \mu_{\sigma} (r) \|_{L^1} \\ 
		&\lesssim \frac{1}{1 - e^{- 4
                  \frac{s_{\sigma}-r}{1+\sigma}}} \| \phi\|_{L^2}
                  \sqrt{\widetilde V_\sigma (r)},
\end{aligned}
\end{equation}
thanks to \eqref{eq:bound_mu_sigma}. This last estimate suggests adding and subtracting the factor~$e^{- 4 \frac{s_{\sigma}-r}{1+\sigma}}$ inside the estimating term for $E_2^{(1,1)}$. In fact, this leads to
\begin{multline} \label{eq:second_bound_E_2^11}
	\int_0^{s_{\sigma}} e^{r \left[ \frac{6}{1+\si} -2 (1-d\sigma)
          \right]} \left(1 - e^{- 4 \frac{s_{\sigma}-r}{1+\sigma}}
        \right) \left| \int_{\R^d} \nabla \varphi_{\epsilon}   \cdot
          \Delta \(e^{-\frac{s_{\sigma}-r}{1+\sigma} L} \mu_{\sigma} (r)\)
        \right|  \dd r \\  
\begin{aligned}
  &\lesssim \int_0^{s_{\sigma}} e^{r \left[ \frac{6}{1+\si} -2 (1-d\sigma) \right]} \sqrt{\widetilde V_\sigma (r)} \dd r \\
&\lesssim \left( \int_0^{s_{\sigma}} e^{2 r \left[ \frac{6}{1+\si} -4
  (1-d\sigma) \right]} \dd r \right)^{1/2} \left( \int_0^{s_{\sigma}}
  e^{4r (1-d\sigma)} \widetilde V_\sigma (r) \dd r \right)^{1/2} \\ 
&\lesssim e^{ 6 \frac{s_{\sigma}}{1+\sigma}} \frac{e^{ -4 s_{\sigma}  (1-d\sigma) }}{2 \sqrt{\frac{3}{1+\si} -2 (1-d\sigma)}} \sqrt{\frac{\mathcal{E} (0)}{1 - d \sigma}}.
		\end{aligned}
\end{multline}
On the other hand, take some $S \in (0, s_{\sigma})$ to be optimized later. Then we also have
\begin{multline} \label{eq:third_bound_E_2^11}
	\int_0^{S} e^{r \left[ \frac{6}{1+\si} -2 (1-d\sigma) \right]}
        e^{- 4 \frac{s_{\sigma}-r}{1+\sigma}} \left| \int_{\R^d}
          \nabla \varphi_{\epsilon}   \cdot \Delta
          \(e^{-\frac{s_{\sigma}-r}{1+\sigma} L} \mu_{\sigma} (r)\)
        \right|  \dd r \\  
	\begin{aligned}
&\lesssim \int_0^{S} e^{r \left[ \frac{6}{1+\si} -2 (1-d\sigma)
  \right]} \frac{e^{- 4 \frac{s_{\sigma}-r}{1+\sigma}}}{1 - e^{- 4
  \frac{s_{\sigma}-r}{1+\sigma}}} \sqrt{V_\sigma (r)} \dd r \\ 
			&\lesssim e^{- 2 \frac{s_{\sigma}}{1+\sigma}} e^{4S \left(\frac{2}{1+\sigma} - 1 + d\sigma \right)} \left( \int_0^{S}  \frac{e^{- 4 \frac{s_{\sigma}-r}{1+\sigma}}}{\left(1 - e^{- 4 \frac{s_{\sigma}-r}{1+\sigma}}\right)^2} \dd r \right)^{1/2} \left( \int_0^{s_{\sigma}} e^{4r (1-d\sigma)} \widetilde V_\sigma (r) \dd r \right)^{1/2} \\
			&\lesssim e^{- 2 \frac{s_{\sigma}}{1+\sigma}}  e^{4s_{\sigma} \left(\frac{2}{1+\sigma} - 1 + d\sigma \right)} \left( \int_0^{S} \frac{e^{- 4 \frac{s_{\sigma}-r}{1+\sigma}}}{\left(1 - e^{- 4 \frac{s_{\sigma}-r}{1+\sigma}}\right)^2} \dd r \right)^{1/2} \sqrt{\frac{\mathcal{E} (0)}{1 - d \sigma}} \\
			&\lesssim e^{- 2 \frac{s_{\sigma}}{1+\sigma}} e^{4s_{\sigma} \left(\frac{2}{1+\sigma} - 1 + d\sigma \right)} (1 + \sigma) \left( \frac{1}{1 - e^{- 4 \frac{s_{\sigma}-S}{1+\sigma}}} - \frac{1}{1 - e^{- 4 \frac{s_{\sigma}}{1+\sigma}}} \right)^{1/2} \\
			&\lesssim e^{- 2 \frac{s_{\sigma}}{1+\sigma}} e^{4s_{\sigma} \left(\frac{2}{1+\sigma} - 1 + d\sigma \right)} \frac{1}{\left( 1 - e^{- 4 \frac{s_{\sigma}-S}{1+\sigma}} \right)^{1/2}} \\
			&\lesssim e^{4s_{\sigma} \left(\frac{2}{1+\sigma} - 1 + d\sigma \right)} \frac{1}{\left( e^{\frac{4 s_{\sigma}}{1+\sigma}} - e^{\frac{4 S}{1+\sigma}} \right)^{1/2}}.
		\end{aligned}
\end{multline}
In order to handle the remaining integral on $\left[S,s_{\sigma}
\right]$, similarly to estimate \eqref{eq:first_bound_E_2^11} we use
Lemma~\ref{lem:fokker_planck_trade} this time with $m=2$, which
implies that, for all $r \in (S, s_{\sigma})$, 
\begin{align*}
	\left| \int_{\R^d} \nabla \varphi_{\epsilon}   \cdot \Delta e^{-\frac{s_{\sigma}-r}{1+\sigma} L} \mu_{\sigma} (r) \right| &\le \| \Delta \varphi_{\epsilon} \|_{L^\infty} \| \nabla e^{-\frac{s_{\sigma}-r}{1+\sigma} L} \mu_{\sigma} (r) \|_{L^1} \\
		&\lesssim \frac{1}{\epsilon} \frac{1}{\left(1 - e^{- 4 \frac{s_{\sigma}-r}{1+\sigma}}\right)^{1/2}} \| \phi \|_{L^2} \sqrt{ \widetilde V_\sigma (r)}.
\end{align*}
Therefore, from the uniform bound on $V_\sigma$ and by direct integration, we also have
\begin{equation} \label{eq:fourth_bound_E_2^11}
\begin{aligned}
	\int_S^{s_{\sigma}} e^{r \left[ \frac{6}{1+\si} -2 (1-d\sigma) \right]} e^{- 4 \frac{s_{\sigma}-r}{1+\sigma}} \left| \int_{\R^d} \Delta \varphi_{\epsilon}   \cdot \nabla e^{-\frac{s_{\sigma}-r}{1+\sigma} L} \mu_{\sigma} (r) \right|  \dd r \\ 
		\begin{aligned}
			&\lesssim \frac{1}{\epsilon} \int_S^{s_{\sigma}} e^{r \left[ \frac{6}{1+\si} -2 (1-d\sigma) \right]} \frac{e^{- 4 \frac{s_{\sigma}-r}{1+\sigma}}}{\Bigl( 1 - e^{- 4 \frac{s_{\sigma}-r}{1+\sigma}} \Bigr)^{1/2}} \sqrt{\widetilde V_\sigma (r)} \dd r \\
			&\lesssim \frac{1}{\epsilon} e^{s_{\sigma} \left[ \frac{6}{1+\si} -2 (1-d\sigma) \right]} \int_S^{s_{\sigma}} \frac{e^{- 4 \frac{s_{\sigma}-r}{1+\sigma}}}{\Bigl( 1 - e^{- 4 \frac{s_{\sigma}-r}{1+\sigma}} \Bigr)^{1/2}} \dd r \\
			&\lesssim \frac{1}{\epsilon} \frac{\sigma+1}{2}  e^{s_{\sigma} \left[ \frac{6}{1+\si} -2 (1-d\sigma) \right]} \Bigl( 1 - e^{- 4 \frac{s_{\sigma}-S}{1+\sigma}} \Bigr)^{1/2} \\
& \lesssim \frac{1}{\epsilon}  e^{s_{\sigma} \left[ \frac{6}{1+\si} - 4 (1-d\sigma) \right]} \Bigl( e^{\frac{4 s_{\sigma}}{1+\sigma}} - e^{\frac{4 S}{1+\sigma}} \Bigr)^{1/2}.
		\end{aligned}
\end{aligned}
\end{equation}
Taking $S$ such that $e^{\frac{4 s_{\sigma}}{1+\sigma}} - e^{\frac{4 S}{1+\sigma}} = 1$, estimate \eqref{eq:third_bound_E_2^11} multiplied by the factor $e^{-6\frac{s_{\si}}{1+\si}}$ leads to the bound
\begin{multline*}	
e^{-6\frac{s_{\si}}{1+\si}} \int_0^{S} e^{r \left[ \frac{6}{1+\si} -2 (1-d\sigma) \right]} e^{- 4 \frac{s_{\sigma}-r}{1+\sigma}} \left| \int_{\R^d} \nabla \varphi_{\epsilon}   \cdot \Delta e^{-\frac{s_{\sigma}-r}{1+\sigma} L} \mu_{\sigma} (r) \right|  \dd r \\
\begin{aligned}
&	\lesssim e^{-4 s_\si   (1-d\si)+2\frac{s_\si}{1+\si}}  = e^{-2s_\si  (1-d\si)} e^{2 \si s_\si \frac{d \si + d - 1}{1 + \si}} \\
& \lesssim e^{-2s_\si  (1-d\si)} e^{2\sigma s_\si^{\max} \frac{d\si+d-1}{1+\si}} = e^{-2s_\si  (1-d\si)} (d \sigma)^{- \sigma\frac{d\si+d-1}{4(1+\si)(1 - d \si)}} \\
& \lesssim  e^{-2s_\si  (1-d\si)},
\end{aligned}
\end{multline*}
while \eqref{eq:fourth_bound_E_2^11} leads to
\[	e^{-6\frac{s_{\si}}{1+\si}} \int_S^{s_{\sigma}} e^{r \left[ \frac{6}{1+\si} -2 (1-d\sigma) \right]} e^{- 4 \frac{s_{\sigma}-r}{1+\sigma}} \left| \int_{\R^d} \nabla \varphi_{\epsilon}   \cdot \Delta e^{-\frac{s_{\sigma}-r}{1+\sigma} L} \mu_{\sigma} (r) \right|  \dd r	\lesssim \frac{1}{\epsilon} e^{- 4 s_{\sigma} (1 - d \sigma)}.\]
Summing these last two estimates with \eqref{eq:second_bound_E_2^11} multiplied by $e^{-6\frac{s_{\si}}{1+\si}}$ leads, in view of \eqref{eq:bound_E_2^11}, to the bound
\begin{equation*}
	| E_2^{(1,1)} | \lesssim e^{- 2 s_{\sigma} (1 - d \sigma)} + \frac{1}{\epsilon} e^{- 4 s_{\sigma} (1 - d \sigma)}.
\end{equation*}
We now turn to $E_2^{(1,2)}$: invoking \eqref{eq:d_s-tilde-rho},
 integrating by parts and reordering terms, we find
 \begin{align*}
   E_2^{(1,2)}
   & = \frac{2}{1-d\si} \int_0^{s_{\sigma}} e^{-2 r(1-d\sigma)}
     \left(1 - d \sigma e^{4 r(1-d\sigma)} \right)^{1/2}
     e^{-2\frac{s_{\si}-r}{1+\si}} \\
     & \qquad \qquad \qquad \qquad \qquad \qquad \qquad \qquad \times \int_{\R^d} \nabla \varphi_{\epsilon}\cdot
     e^{-\frac{s_{\sigma}-r}{1+\sigma} L} (y \diver \mu_{\sigma}(r))
     \dd r \\ 
 & = - \frac{2}{1-d\si} \int_0^{s_{\sigma}} e^{-2 r(1-d\sigma)}
   \left(1 - d \sigma e^{4 r(1-d\sigma)} \right)^{1/2}
   e^{-4\frac{s_{\si}-r}{1+\si}} \\
   & \qquad \qquad \qquad \qquad \qquad \qquad \qquad \qquad \times \int_{\R^d} \nabla \varphi_{\epsilon}
   \cdot \diver  e^{-\frac{s_{\sigma}-r}{1+\sigma} L}
   (\mu_{\sigma}(r)\otimes y
   )  \dd r \\ 
 & \quad - \frac{2d}{1-d\si} \int_0^{s_{\sigma}} e^{-2 r(1-d\sigma)}
   \left(1 - d \sigma e^{4 r(1-d\sigma)} \right)^{1/2}
   e^{-2\frac{s_{\si}-r}{1+\si}} \\
   & \qquad \qquad \qquad \qquad \qquad \qquad \qquad \qquad \times \int_{\R^d} \nabla \varphi_{\epsilon}
   \cdot  e^{-\frac{s_{\sigma}-r}{1+\sigma} L}  \mu_{\sigma}(r)  \dd r
   \\ 
 & =: E_2^{(1,2,1)}+E_2^{(1,2,2)},
 \end{align*}
 which allows to define two new error terms $E_2^{(1,2,1)}$ and $E_2^{(1,2,2)}$ that we need to estimate. The second one is straightforward from the same techniques as above, using \eqref{eq:bound_mu_sigma} and \eqref{eq:integrable_estimate_s}, so that
 \[ \left\rvert E_2^{(1,2,2)}\right\rvert \lesssim  e^{-\frac{2 s_{\si}}{1+\si}} \left( \int_0^{\infty} e^{-4r\left(2-2d\si-\frac{1}{1+\si} \right)}  \dd r \right)^{1/2}, \]
 where the last integral is convergent since, as we have seen before,
 the assumption $\si <
 1/(2d)$ implies $2-2d\si-\frac{1}{1+\si} >\frac{1}{2d+1}$. 
 For the first one we rather use the identity
 \[ y \cdot\mu_{\sigma} = \IM \left(
     \frac{1}{\tau_{\sigma}^{1-\frac{d\si}{2}}}  \overline{v_{\si}}
     y\cdot \nabla v_{\si}   \right)  , \]
 and that $\|y v_{\sigma} \|_{L^2} \le C$ from
 Lemma~\ref{lem:apriori-unif-si-t}, along with Lemma
 \ref{lem:fokker_planck_trade} with $m=1$, so that 
 \begin{align*}
 \left\lvert E_2^{(1,2,1)}\right\rvert & \lesssim \int_0^{s_{\sigma}}  e^{-2 r(1-d\sigma)} \underbrace{\left(1 - d \sigma e^{4 r(1-d\sigma)} \right)^{1/2}}_{\le 1} \frac{e^{-4\frac{s_{\si}-r}{1+\si}}}{\(1 - e^{-4\frac{s_{\si}-r}{1+\si}}\)^{1/2}} \| y \mu_{\si}(r) \|_{L^1} \dd r \\
 & \lesssim \int_0^{s_{\sigma}}  e^{-2 r(1-d\sigma)}
   \frac{e^{-4\frac{s_{\si}-r}{1+\si}}}{\(1 -
   e^{-4\frac{s_{\si}-r}{1+\si}}\)^{1/2}} \|y v_{\sigma} \|_{L^2}
   \sqrt{\widetilde V_\si(r)} \dd r \\ 
 & \lesssim \int_0^{s_{\sigma}}  e^{-2 r(1-d\sigma)} \frac{e^{-4\frac{s_{\si}-r}{1+\si}}}{\(1 - e^{-4\frac{s_{\si}-r}{1+\si}}\)^{1/2}} \dd r \\
& \lesssim \int_0^{s_{\sigma}}  e^{-2 (s_{\sigma} - q) (1-d\sigma)} \frac{e^{-\frac{4q}{1+\si}}}{\(1 - e^{-\frac{4q}{1+\si}}\)^{1/2}} \dd q \\
 & \lesssim e^{-2 s_{\sigma} (1-d\sigma)} \int_0^{s_{\sigma}} \frac{e^{-2q \left(\frac{2}{1+\si} - (1-d\sigma) \right)}}{\(1 - e^{-\frac{4q}{1+\si}}\)^{1/2}} \dd q \\
 & \lesssim e^{-2 s_{\sigma} (1-d\sigma)}.
 \end{align*}
 We can now go back to $E_2^{(1,3)}$ and $E_2^{(1,4)}$, which are straightforward in view of the above analysis: as
 \[ E^{(1,3)}_2=\frac{1}{1-d\si}  e^{-2 s_{\si}(1-d\sigma)} \left(1 - d \sigma e^{4 s_{\si}(1-d\sigma)} \right)^{1/2} \int_{\R^d} \varphi_{\epsilon} \diver \mu_{\si} \dd r,\]
 we directly get 
 \[ \left\lvert  E^{(1,3)}_2\right\rvert \lesssim e^{-2 s_{\si} (1-d\si)}, \]
 and similarly, writing
 \[  E^{(1,4)}_2= - \frac{1}{1-d\si} e^{-2\frac{s_{\si}}{1+\si}} \left(1 - d \sigma  \right)^{1/2} \int_{\R^d} \nabla   \varphi_{\epsilon} \cdot e^{\frac{s_{\si}}{1+\si}L} \mu_{\si}(0), \]
 from Lemma~\ref{lem:fokker_planck_trade}, we infer
  \[ \left\lvert E^{(1,4)}_2\right\rvert \lesssim e^{-\frac{2 s_{\si}}{1+\si}}. \]
  Gathering the bounds on $E_0^{(1)}$, $E_0^{(2)}$, $E_1$, $E_3$, $E_4$, $E_2^{(1,1)}$, $E_2^{(1,2,1)}$, $E_2^{(1,2,2)}$, $E_2^{(1,3)}$ and $E_2^{(1,4)}$, we get the final estimate
%
%
%
  \begin{multline*}
  	\int_{\R^d} \varphi_{\epsilon} (\tilde{\rho}_{\sigma}(s_{\sigma})-e^{s_{\sigma}\frac{L}{1+\sigma}} \widetilde{\rho}_{\sigma}(0)) \\
  	\lesssim \epsilon + \si + 3 e^{-\frac{2 s_{\si}}{1+\si}} +\frac{1}{\epsilon^3} e^{-\frac{8 s_{\si}}{1+\si}}+ \frac{2}{\epsilon} e^{-4 s_{\si} (1 - d \sigma)} + 4 e^{-2 s_{\si}(1-d\si)} .
  \end{multline*}
 We optimize this estimate by setting  $\epsilon = e^{-2 s_{\sigma} (1 - d
   \sigma)}$, which yields
  \begin{equation*}
  	\int_{\R^d} \varphi_{\epsilon} (\tilde{\rho}_{\sigma}(s_{\sigma})-e^{s_{\sigma}\frac{L}{1+\sigma}} \widetilde{\rho}_{\sigma}(0)) \lesssim \sigma +
        e^{-2s_\si  (1-d\si)}, 
  \end{equation*}
which ends the proof of Theorem~\ref{theo:cv-log-W1}.

\subsection{Conclusion: proof of Theorem~\ref{theo:log-temps-long}}
\subsubsection{Convergence in Wasserstein distance \eqref{eq:convergence_thm}}

Recall that $\Gamma(x)=\frac{e^{-|x|^2}}{(2\pi)^{d/2}}$, so  
\[ \int_{\R^d} \widetilde \varrho_{\sigma}(s_{\sigma})=\frac{1}{\|
    \phi_{\sigma}\|_{L^2}^2}\int_{\R^d}\tilde{\rho}_{\sigma}(s_{\sigma})
  = 1 =\int_{\R^d} \Gamma , \] 
with the obvious notation $\widetilde
\varrho_{\sigma}(s_{\sigma})=\varrho_{\sigma}(s_{\sigma}(t))$. Using
Theorem \ref{theo:cv-log-W1} (as the same remains valid for
$\widetilde \varrho_{\sigma}$) alongside Lemma
\ref{lem:fokker_planck_to_gaussian}, we can then write that 
\begin{align*}
 W_1(\widetilde \varrho_{\sigma}(s_{\sigma}), \Gamma) & \le W_1(\widetilde \varrho_{\sigma}(s_{\sigma}),e^{\frac{s_{\sigma}}{1+\sigma} L}
  \tilde{\rho}_{\sigma}(0) )  + W_1(e^{\frac{s_{\sigma}}{1+\sigma} L}  \tilde{\rho}_{\sigma}(0), \Gamma)  \\
  & \lesssim  \sigma +
        e^{-2s_\si  (1-d\si)}  + e^{-\frac{2 s_{\sigma}}{1+\sigma}}  W_2(\tilde{\rho}_{\sigma}(0),\Gamma).
  \end{align*}

Going back to the time variable $t$, we then get that for all $t\ge 2$,
\[ W_1(\varrho_{\sigma}(t),\Gamma) \lesssim \sigma + \frac{1}{\dot{\tau}_{\sigma}(t)}.   \]

On the other hand, we recall that from \cite[Theorem~1.5]{Ferriere2021}, there exists
$C>0$ such that for all $t\ge 2$,
\begin{equation*}
  W_1\(\varrho_0(t),\Gamma\)\le \frac{C}{\sqrt{\ln t}}. 
\end{equation*}

We finally need a last property on the dispersive rate to conclude:
\begin{lemma} \label{lem:decreasing_bound_W1}
For all $\epsilon >0$, there exists $T>0$ and $\sigma_0>0$ such that for all $0 < \sigma \le \sigma_0$ and for all $t \ge T$, $\dot{\tau}_{\sigma}(t)^{-1} \le \epsilon$.
\end{lemma}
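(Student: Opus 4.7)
The key observation is that $\dot\tau_\sigma$ is nondecreasing (because $\ddot\tau_\sigma = 1/(2\tau_\sigma^{d\sigma+1}) \ge 0$ from \eqref{eq:tau-alpha}), so it suffices to prove the bound $\dot\tau_\sigma(T)^{-1} \le \epsilon$ at a single well-chosen time $T$ depending on $\epsilon$, after which the bound propagates to all $t \ge T$ automatically. This reduces the problem to a one-time lower bound on $\dot\tau_\sigma$ that is uniform for $\sigma$ small.

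For that lower bound, I combine the asymptotic behavior $\dot\tau_0(t) \sim \sqrt{\ln t}$ as $t \to +\infty$ from \eqref{eq:disp-log} with the quantitative convergence
\begin{equation*}
  |\dot\tau_0(t) - \dot\tau_\sigma(t)| \le C \sigma (\ln t)^{3/2},\quad t \ge 2,
\end{equation*}
provided by Proposition~\ref{prop:cvODE}. Thus $\dot\tau_\sigma(t) \ge \dot\tau_0(t) - C\sigma(\ln t)^{3/2}$, and by choosing $t$ large, the first term dominates uniformly in small $\sigma$.

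Concretely, given $\epsilon > 0$, I first pick $T \ge 2$ large enough that $\dot\tau_0(T) \ge 2/\epsilon$, which is possible since $\dot\tau_0(T) \to +\infty$. With $T$ now fixed, I pick $\sigma_0 > 0$ small enough that $C\sigma_0 (\ln T)^{3/2} \le 1/\epsilon$. For every $\sigma \in (0,\sigma_0]$, Proposition~\ref{prop:cvODE} then gives $\dot\tau_\sigma(T) \ge 2/\epsilon - 1/\epsilon = 1/\epsilon$. Using that $\dot\tau_\sigma$ is nondecreasing on $[0,+\infty)$, we conclude $\dot\tau_\sigma(t) \ge 1/\epsilon$, equivalently $\dot\tau_\sigma(t)^{-1} \le \epsilon$, for all $t \ge T$ and all $\sigma \in (0,\sigma_0]$.

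There is no real obstacle here: the whole argument rests on the already established estimates in Section~\ref{sec:ODE}, combined with the trivial monotonicity of $\dot\tau_\sigma$. The only point to watch is that the smallness condition on $\sigma_0$ depends on $T$ (hence on $\epsilon$), but since $T$ is fixed before $\sigma_0$ is chosen, the order of quantifiers matches exactly the statement of the lemma.
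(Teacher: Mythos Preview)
Your proof is correct and follows essentially the same approach as the paper's: both use Proposition~\ref{prop:cvODE} to get $\dot\tau_\sigma(t) \ge \dot\tau_0(t) - C\sigma(\ln t)^{3/2}$, pick $T$ large so that $\dot\tau_0(T) \ge 2/\epsilon$, then $\sigma_0$ small so that the error term at $T$ is at most $1/\epsilon$, and finally invoke the monotonicity of $\dot\tau_\sigma$ (from $\ddot\tau_\sigma \ge 0$) to extend the bound to all $t \ge T$.
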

\begin{proof}
We know from Proposition \ref{prop:cvODE} that there exists a constant
$C>0$ such that 
\[ \dot{\tau}_0(t) -C \sigma (\ln t)^{3/2} \le \dot{\tau}_{\sigma}(t)   \]
for all $t \ge 2$. As $\dot{\tau}_0(t) \sim \sqrt{\ln t}$ as $t \to +\infty$, there exists $T=T(\epsilon)>0$ large enough such that 
\[ \dot{\tau}_0(T) \ge 2 \epsilon^{-1}.  \]
On the other hand, for $\sigma_0=\sigma_0(T,\epsilon)>0$ small enough we have
\[  C \sigma (\ln T)^{3/2} \le \epsilon^{-1} ,\quad \forall
  \si\le \si_0.\]
We then deduce that $\dot{\tau}_{\sigma}(T) \le \epsilon^{-1}$, and as $\dot{\tau}$ is a non-decreasing function since $\ddot{\tau}_{\sigma} \ge 0$, we get the result.
\end{proof}

We now fix $\epsilon>0$. We take $T>0$ and $\sigma_0>0$ given by
Lemma~\ref{lem:decreasing_bound_W1} so that  
\[ W_1( \varrho_{\sigma}(t), \varrho_{0}(t) )  \lesssim \sigma +\frac{1}{\dot{\tau}_{\sigma}(t)} + \frac{1}{\sqrt{\ln t}} \le \epsilon , \]
for all $ t \ge T$ and $\sigma\le\sigma_0$. On the other hand, using Theorem \ref{theo:log-loc-temps}, there exists $\sigma_1>0$ such that
\[ W_1( \varrho_{\sigma}(t), \varrho_{0}(t) ) \lesssim \sqrt{C_1 \sigma + \| \phi_{\sigma}-\phi_0 \|_{L^2} } e^{\frac{C_0 T}{2}} \le \epsilon, \]
for all $0 \le t \le T$ taking $\sigma \le \min(\sigma_0,\sigma_1)$
small enough. This ends the proof of
Theorem~\ref{theo:log-temps-long}, up to the final claim concerning
the convergence rate.

\subsubsection{Towards the convergence rate \eqref{eq:convergence_rate_thm}}

In this section we perform a refinement of the previous analysis in
order to obtain a convergence rate for the quantity of interest
$\sup_{t \ge 0} W_1(\varrho_{\sigma}(t),W_1(\varrho_0(t))$, assuming
that $\| \phi_{\sigma} - \phi_0 \|_{L^2}=\O(\si)$. First we give a
more precise asymptotic expansion for $\dot{\tau}_{\sigma}^{-1}$. 

\begin{lemma} \label{lem:rate_dot_tau}
There exists a constant $C>0$ such that for all $t \ge 2$, we have
\[ \left| \frac{1}{\dot{\tau}_{\sigma}(t)} - \sqrt{d \sigma} \right| \le C \frac{1}{\sqrt{\ln t}}. \]
\end{lemma}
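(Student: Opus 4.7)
The plan is to start from the algebraic identity
\[
\dot\tau_\sigma(t)^2 = \frac{1}{d\sigma}\bigl(1-\tau_\sigma(t)^{-d\sigma}\bigr),
\]
which rewrites as $\frac{1}{\dot\tau_\sigma^2} - d\sigma = \frac{d\sigma}{\tau_\sigma^{d\sigma}-1}\ge 0$. In particular $1/\dot\tau_\sigma \ge \sqrt{d\sigma}$, so the quantity inside the absolute value is nonnegative and, by factoring a difference of squares,
\[
\frac{1}{\dot\tau_\sigma} - \sqrt{d\sigma} \;=\; \frac{d\sigma}{\bigl(\tau_\sigma^{d\sigma}-1\bigr)\bigl(1/\dot\tau_\sigma + \sqrt{d\sigma}\bigr)}.
\]
It will then suffice to bound each factor in the denominator from below by a quantity of size $\sqrt{\ln t}$ and $d\sigma \ln t$ respectively, so that their product gives $d\sigma\sqrt{\ln t}$, leading precisely to the announced rate $1/\sqrt{\ln t}$ after division.

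For the first factor I will use the elementary inequality $e^{x}-1 \ge x$ for $x\ge 0$ together with the lower bound $\tau_\sigma(t)\ge c_0 t$ for all $t\ge T_0$, uniformly in $\sigma\in (0,1]$, which has already been established within the proof of Proposition~\ref{prop:cvODE}. This yields $\tau_\sigma(t)^{d\sigma}-1 \ge d\sigma\,\ln(c_0 t) \gtrsim d\sigma\,\ln t$ for $t$ large. For the second factor I will use the comparison $\dot\tau_\sigma \le \dot\tau_0$ from \eqref{eq:compar-tau} and the asymptotic behavior $\dot\tau_0(t)\sim \sqrt{\ln t}$ from \eqref{eq:disp-log}, which gives $\dot\tau_\sigma(t)\le C\sqrt{\ln t}$ and therefore $1/\dot\tau_\sigma + \sqrt{d\sigma} \ge 1/\dot\tau_\sigma \gtrsim 1/\sqrt{\ln t}$, uniformly in $\sigma$.

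Combining the two bounds yields the desired inequality on some tail $t\ge T_0$. For the remaining compact range $t\in [2,T_0]$, one just needs to note that $1/\sqrt{\ln t}$ is bounded below and that $1/\dot\tau_\sigma(t)-\sqrt{d\sigma}$ is bounded above, uniformly in $\sigma\in (0,\eps)$: this will follow from Lemma~\ref{lem:cvODE-bounded} (which ensures $\tau_\sigma(2)\to \tau_0(2)>1$, hence $\dot\tau_\sigma(2)$ stays bounded away from zero) and the monotonicity of $\dot\tau_\sigma$.

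The main delicate point is keeping the estimates uniform in $\sigma\to 0$: a naive use of $\tau_\sigma^{d\sigma}-1\approx d\sigma$ would lead to a bound blowing up like $1/\sqrt{d\sigma}$. The factorization balances two regimes — when $d\sigma \ln t$ is small the factor $1/\dot\tau_\sigma+\sqrt{d\sigma}$ has size $1/\sqrt{\ln t}\gg \sqrt{d\sigma}$ and compensates; when $d\sigma\ln t$ is large the first factor $\tau_\sigma^{d\sigma}-1$ is itself much larger than $d\sigma\ln t$, which gives even more room. In either case one recovers the claimed $1/\sqrt{\ln t}$ rate.
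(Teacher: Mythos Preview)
Your proof is correct and takes a genuinely different route from the paper's own argument. The paper defines $X_\sigma(t)=\dot\tau_\sigma(t)^{-1}-\sqrt{d\sigma}$, computes
\[
\dot X_\sigma = -\frac{\ddot\tau_\sigma}{\dot\tau_\sigma^2}
= -\frac{\dot\tau_\sigma^{-1}+\sqrt{d\sigma}}{2\tau_\sigma}\,X_\sigma,
\]
and then uses the comparison $\dot\tau_\sigma\le\dot\tau_0$, $\tau_\sigma\le\tau_0$ to derive a Gr\"onwall-type differential inequality that integrates explicitly against $\dot\tau_0$, giving the decay $1/\dot\tau_0(t)\sim 1/\sqrt{\ln t}$. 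Your approach is purely algebraic: the identity $\dot\tau_\sigma^{-2}-d\sigma = d\sigma/(\tau_\sigma^{d\sigma}-1)$ and the difference-of-squares factorization reduce the estimate to bounding two static quantities, for which you invoke the linear lower bound $\tau_\sigma(t)\ge c_0 t$ (from the proof of Proposition~\ref{prop:cvODE}) together with $e^x-1\ge x$, and the bound $\dot\tau_\sigma\le\dot\tau_0\lesssim\sqrt{\ln t}$ from \eqref{eq:compar-tau} and \eqref{eq:disp-log}. Your treatment of the compact range $[2,T_0]$ via Lemma~\ref{lem:cvODE-bounded} and monotonicity is also clean. The upshot: your argument is shorter and avoids any integration, at the (minor) cost of importing the lower bound $\tau_\sigma\ge c_0 t$, while the paper's ODE argument is self-contained and in principle yields a slightly sharper decay factor $e^{-\sqrt{d\sigma}\,\dot\tau_0(t)}/\dot\tau_0(t)$, though this refinement is not used downstream.
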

\begin{proof}
We first recall that $\tau_0(t) \ge \tau_{\sigma}(t)$ for all $t \ge 0$ hence
\[ \dot{\tau}_{\sigma}^2 = \frac{1}{d \sigma} \left(1 - \frac{1}{\tau_{\sigma}^{d\sigma}} \right) \le  \frac{1}{d \sigma} \left(1 - \frac{1}{\tau_{0}^{d\sigma}} \right) \le \ln \tau_0 = \dot{\tau}_{0}^2. \]
Denote $X_{\sigma}(t)=\dot{\tau}_{\sigma}(t)^{-1} - \sqrt{d \sigma}\ge
0$, so we have
\begin{align*}
\dot{X}_{\sigma} & = - \frac{\ddot{\tau}_{\sigma}}{\dot{\tau}_{\sigma}^2} = - \frac{1}{2 \dot{\tau}_{\sigma}^2 \tau_{\sigma}^{1+d\sigma}} = - \frac{1}{2 \dot{\tau}_{\sigma}^2 \tau_{\sigma}}(1-d \sigma \dot{\tau}_{\sigma}^2) = - \frac{\dot{\tau}_{\sigma}^{-1} + \sqrt{d \sigma}}{2 \tau_{\sigma}} X_{\sigma} \\
& \le - \frac{\dot{\tau}_{0}^{-1} + \sqrt{d \sigma}}{2 \tau_{0}} X_{\sigma} = - \ddot{\tau}_0 (\dot{\tau}_{0}^{-1} + \sqrt{d \sigma}) X_\si,
\end{align*}
as $\dot{\tau}_0 \ge \dot{\tau}_{\sigma}$. We then compute, for any $t_0 >1$,
\begin{align*}
X_{\sigma}(t) & \le X_{\sigma}(t_0) \exp \left( -\int_{t_0}^t  \ddot{\tau}_0(s) (\dot{\tau}_{0}(s)^{-1} + \sqrt{d \sigma} )\dd s \right) \\
& =  X_{\sigma}(t_0) \exp \left( - \left[ \ln (\dot{\tau}_0(s)) + \sqrt{d \sigma} \dot{\tau}_0(s) \right]_{t_0}^t \right) \\
& \le X_{\sigma}(t_0) \dot{\tau}_0(t_0) e^{\sqrt{d \sigma} \dot{\tau}_0(t_0)} \frac{1}{\dot{\tau}_0(t)} e^{-\sqrt{d \sigma} \dot{\tau}_0(t)},
\end{align*}
which provides the announced bound recalling that $\dot{\tau}_0(t) \sim \sqrt{\ln t}$.
\end{proof}
The convergence rate from Lemma~\ref{lem:rate_dot_tau} then ensures that there exists $C_1>0$ such that
\begin{equation} \label{eq:estimate_W1_varrho_1}
 W_1( \varrho_{\sigma}(t), \varrho_{0}(t) ) \le C_1\left( \frac{1}{\sqrt{\ln t}} + \sqrt{\sigma} \right) ,
 \end{equation}
for all $t \ge 2$ from the previous analysis. On the other hand, for any 1-Lipschitz function $\varphi$,
\begin{multline*}
\int_{\R^d} \varphi(y) (\varrho_{\sigma}(t,y)-  \varrho_{0}(t,y)) \dd y  \\
\begin{aligned}
& =  \int_{\R^d} \varphi(y) \left( \|\phi_{\sigma} \|_{L^2}^{-2} \tau_{\sigma}(t)^d | \u_{\sigma}(t,y\tau_{\sigma}(t))|^2 -  \|\phi_{0} \|_{L^2}^{-2} \tau_{0}(t)^d | \u_{0}(t,y\tau_{0}(t))|^2  \right) \dd y \\
& = \left( \| \phi_{\sigma} \|_{L^2}^{-2} - \| \phi_{0} \|_{L^2}^{-2} \right)   \int_{\R^d} \varphi(y) \tau_{\sigma}(t)^d | \u_{\sigma}(t,y\tau_{\sigma}(t))|^2 \dd y \\
& \quad + \| \phi_{0} \|_{L^2}^{-2} \int_{\R^d} \varphi(y) \left( \tau_{\sigma}(t)^d | \u_{\sigma}(t,y\tau_{\sigma}(t))|^2 -
\tau_{0}(t)^d | \u_{0}(t,y\tau_{0}(t))|^2  \right) \dd y.
\end{aligned} 
\end{multline*}   
Since $\varphi$ is $1$-Lipschitz, $\varphi(y)\le |\varphi(0)|+|y|$. It
is clear that the first term on the right hand side is 
$\O(\sigma)$ from the assumption $\|\phi_\si-\phi_0\|_{L^2}=\O(\si)$, and using the
boundedness of the second momenta \eqref{eq:bound-rho-2-mom}.
Resuming
the computations from Section~\ref{sec:ehrenfest-wasserstein},
\begin{align*}
  \int_{\R^d}
  &\left|  \tau_{\sigma}(t)^d | \u_{\sigma}(t,y\tau_{\sigma}(t))|^2 -
\tau_{0}(t)^d | \u_{0}(t,y\tau_{0}(t))|^2  \right| \dd y \\
  &\quad
    \lesssim \(1+\(\frac{\tau_0(t)}{\tau_\si(t)}\)^{d/2}\)\( \sigma
    \(\ln (t+4)\)^{3/2}  + \sigma e^{C_0 t}\).
\end{align*}
We have a similar estimate for
\begin{equation*}
 \int_{\R^d}
  |y| \left|  \tau_{\sigma}(t)^d | \u_{\sigma}(t,y\tau_{\sigma}(t))|^2 -
\tau_{0}(t)^d | \u_{0}(t,y\tau_{0}(t))|^2  \right| \dd y ,
\end{equation*}
by distributing the weight $|y|$ as follows when invoking Cauchy-Schwarz
inequality,
\begin{equation*}
  \int_{\R^d} |y|\times\left| |f(y)|^2-|g(y)|^2\right|\dd y\le \( \|y
    f\|_{L^2}+\|yg\|_{L^2}\)\|f-g\|_{L^2}, 
  \end{equation*}
  and using the boundedness of momenta in $L^2$, \eqref{eq:bound-rho-2-mom}. 
Up to changing the constants, we infer that there exist $C_0$ and $C_2>0$ such
that for all $t \ge 0$, 
\begin{equation} \label{eq:estimate_W1_varrho_2}
W_1( \varrho_{\sigma}(t), \varrho_{0}(t) ) \le C_2 \sigma  e^{C_0 t}.
\end{equation} 
We interpolate between estimates \eqref{eq:estimate_W1_varrho_1} and \eqref{eq:estimate_W1_varrho_2}. Denote by $t_{\sigma} \ge 2$ the unique time such that we have the equality
\[  C_1\left( \frac{1}{\sqrt{\ln t_{\sigma}}} + \sqrt{\sigma} \right) = C_2 \sigma  e^{C_0 t_{\sigma}}. \]
Writing $Y=\sqrt{\sigma}$, we consider the quadratic equation
\[ C_2 e^{C_0 t_{\sigma}} Y^2 - C_1 Y - \frac{C_1}{\sqrt{\ln t_{\sigma}}} = 0 ,  \]
whose only positive root is
\[ Y_+=  \frac{C}{2} e^{-C_0 t_{\sigma}} + \frac12 \left( C^2e^{-2C_0
      t_{\sigma}}  + 4 C \frac{e^{-C_0 t_{\sigma}}}{\sqrt{\ln
        t_{\sigma}}}  \right)^{1/2},\quad\text{where }C=C_1/C_2. \]
With rough upper and lower estimates, we infer that $Y_+^2=\si$ satisfies
\[ c_1 \frac{e^{-C_0 t_{\sigma}}}{\sqrt{\ln t_{\sigma}}} \le \sigma \le c_2  \frac{e^{-C_0 t_{\sigma}}}{\sqrt{\ln t_{\sigma}}}, \]
for some constants $c_1$, $c_2>0$. The lower bound provides that $\ln \ln (\sigma^{-1}) \le c_3 \ln t_{\sigma}$ for a constant $c_3>0$, which combined with the upper bound allows to write that 
\[  e^{C_0 t_{\sigma}} \le \frac{c_2 \sqrt{c_3}}{\sigma \sqrt{\ln \ln \frac{1}{\sigma}} } .\]
Plugging such estimate into \eqref{eq:estimate_W1_varrho_2} with $t=t_{\sigma}$ gives the result.

\subsection*{Acknowledgments} The authors are grateful to Fran\c cois
Bolley for helpful discussions on Wasserstein distance.  
Q.C. and G.F. acknowledge the support of the CDP C2EMPI, together with
the French State under the France-2030 programme, the University of
Lille, the Initiative of Excellence of the University of Lille, the
European Metropolis of Lille for their funding and support of the
R-CDP-24-004-C2EMPI project.

\bibliographystyle{abbrv}
\bibliography{continuity}

\end{document}